\newtheorem{theorem}{Theorem}[section]
\newtheorem{lemma}[theorem]{Lemma}
\newtheorem{proposition}[theorem]{Proposition}
\newtheorem{corollary}[theorem]{Corollary}
\theoremstyle{definition}
\newtheorem{definition}[theorem]{Definition}
\theoremstyle{remark}
\newtheorem{remark}[theorem]{Remark}
\numberwithin{equation}{section}
\begin{document}

\title[Quaternionic inverse Fourier Transforms on LCA Groups ]
{Quaternionic inverse Fourier Transforms on Locally Compact Abelian Groups}
\author[ M.J. Saadan]{Majid Jabbar Saadan}
\address{Majid Jabbar Saadan, Department of Pure Mathematics, Ferdowsi University of Mashhad, Mashhad, P.O. Box 1159-91775, Iran}
\email{majidalothman4@gamil.com}
\author[M. Janfada]{Mohammad Janfada}
\address{Mohammad Janfada, Department of Pure Mathematics, Ferdowsi University of Mashhad, Mashhad,  P.O. Box 1159-91775, Iran}
\email{janfada@um.ac.ir}
\author[ R.A. Kamyabi]{Radjab Ali Kamyabi-Gol}
\address{Radjab Ali Kamyabi-Gol, Department of Pure Mathematics, Ferdowsi University of Mashhad, Mashhad, P.O. Box 1159-91775, Iran}
\email{kamyabi@um.ac.ir}

\thanks{2010 Mathematics Subject Classification: Primary 43A38; Secondary 43A40,46S10,43A50}
\keywords{Locally compact abelian groups; Quaternion inverse Fourier transforms; Plancherel's theorem}


\begin{abstract}
In this paper, the inverse of the quaternionic Fourier transform (QFT) on locally compact abelian groups is investigated. Due to the non-commutatively of multiplication of quaternions, there are different types of QFTs right, left and two-sided quaternionic Fourier transform. We focus on the right-sided quaternionic Fourier transform (RQFT) and two-sided quaternionic Fourier transform (SQFT). We establish the quaternionic Plancherel and inversion theorems for the square integrable quaternionic-valued signals on $G^2$, the space $L^2\left(G^2, \mathbb{H}\right)$, where $G$ is a locally compact abelian group. Also  RQFT on the space $L^2\left(G^2, \mathbb{H}\right)$ is studied. Furthermore relations between RQFT and SQFT are discussed. These results provide new proofs for the classical inverse Fourier transform, Plancherel theorem and etc. in $L^2(G)$.

\end{abstract}

\maketitle

\section{Introduction}\label{se1}
 In classical Fourier theory, for any $f\in L^1\left(\mathbb{R},\mathbb{C}\right)$, the Fourier transform $\hat{f}(\xi )$ is well-defined by
\begin{equation} \label{eq1.1}
\hat{f}\left(\xi \right)=\int_{\mathbb{R}} f(x)e^{-i\xi x}dx,  \quad   (\xi\in  \mathbb{R}).
\end{equation}
There are several different definitions of the classical Fourier transform known in literature, see \cite{Butz}.
Via the  inversion formula, the transform can be reversed, so that, well-behaved functions $f\ $can be represented as an infinite sum of trigonometric polynomials, where the limit of which equals                                        $f\left(x\right)=\int_{\mathbb{R}} \hat{f}\left(\xi \right)e^{i\xi x}d\xi $, for almost every $x\in  \mathbb{R}$.\\
This transform is a very powerful tool in fields such as chemistry, physics, and computer engineering. For example, complicated sound waves take the form of periodic functions, and the infinite sums that represent them can be approximated very well by just a couple of leading Fourier coefficients. Plancharel's theorem is an application of the Fourier transform that is used to analyze particles in quantum physics. This Fourier mapping and its characteristics do not stem from properties of the real numbers, but instead from certain mathematical spaces. The Fourier transform can thus be generalized to sets other than the real line, such as the circle, the integers, and in fact any locally compact abelian group (LCA group). Studying the Fourier transform of  LCA groups allows us to explain many of the properties that we take for granted about the everyday Fourier transform of real numbers.\\
 The previous contributions on inversion theorem and energy-preserved property of QFTs are developed in \cite{Cheng,Ell2,Hart}. On one hand, however, the existing results are not well established systematically. On the other hand, the prerequisites for setting up of the established theorem have not be studied completely.

 We are going to generalize the idea of the classical Fourier analysis into the quaternionic case. Due to the non-commutativity this extension is not trivial and lacks some important features. There has been a lot of interest recently in the quaternionic Fourier transform. We will compare the two best-known quaternionic Fourier tansforms, the one-sided version and the two-sided version. The one-sided version and the derived structure behind it lacks some needed properties, which the two-sided (or sandwiched) version has. But there are some triﬂes in the transfer, e.g. in the normal case we have $T_{\omega}\mathcal{F}f=\mathcal{F}{M_{\omega}f}$ whereas in quaternionic case we have $T_{\omega}\mathcal{F}_sf=\mathcal{F}_s{\overline{M_{-\omega}f}}$.\\

 In light of this, the inversion theorem on $L^1\left(G^2,\mathbb{H}\right)$ and Plancherel theorem of QFTs are investigated thoroughly in this paper, where $G$ is a locally compact abelian group. Therefore, it is of great interest to progress the function theory of QFT for for square integrable functions with respect to the locally compact abelian groups.To achieve this goal, we want to adopt the method of approximation to the identity by (good kernels). This method is commonly used in classical Fourier analysis\cite{Rud,Stein,Foll}.

For the case $G=\mathbb{R}$ the quaternion Fourier transforms (QFTs) play a vital role in the representation of (hypercomplex) signals \cite{Ell2} and in \cite{Bulow,Asef,EllS} authors used the QFT to proceed the color image analysis. E. Bayro-Corrochano, N. Trujillo, M. Naranjo in \cite{Bayro} applied the QFT to image pre-processing and neural computing techniques for speech recognition.
 Since we are working in section \ref{se3} of this paper with quaternion-valued function $f \in L^2\left(G^2,\mathbb{H}\right)$, we extend the Fourier transform. There is, as T. A. Ell already explained beautifully in case $G=\mathbb{R}$ in \cite{Ell3}, a wide range of potential definitions. The closest generalization would be to use two characters on the same side. This idea leads to the following approach, already introduced by M. Bahri and R. Ashino in case $G=\mathbb{R}$ in their Article Two-Dimensional Quaternionic Windowed Fourier transform \cite{Bahri}.
 Following the pioneering works of Ell and Sangwine, Hitzer studied the QFTs (including right-sided QFT and two-sided QFT) applied to quaternion-valued functions \cite{Hitzer1} and presented a series of further generalizations for QFTs \cite{Hitzer2,Hitzer3,Hitzer4}.
In 2016 Daniel Alpay, Fabrizio Colombo, David P. Kimsey and Irene Sabadini in \cite{Alpay}
are studied the quaternion-valued positive definite functions on locally compact abelian groups, real countably Hilbertian Nuclear spaces and on the space $\mathbb{R}^n$, endowed with the Tychonoff topology. In particular, they are proved a quaternionic version of the Bochner–Minlos theorem.\\

 This paper is organized as follows: Section \ref{se2} recalls some basic knowledge of quaternion algebra. Section \ref{se3} investigates the inversion theorem on $L^2\left(G^2,\mathbb{H}\right)$ and Plancherel theorem of right-sided QFT. In section \ref{se4}, we establish the relation between RQFT and two-sided QFT (SQFT) and study the elementary properties of SQFT.
  \section{Preliminaries}\label{se2}
In this section, we provide some basic concepts of quaternions which are essential for our study.

 In 1843, W. Hamilton discovered the new multiplication rules of a generalization of the complex numbers. He was so satisfied that he engraved them in a stone of the Broombridge road. The name of those special 4-tupel is Quaternions. This introduction to Quaternions is based on V. Kravchenkos Applied Quaternionic Analysis \cite{Kravchenko} and the book of K. G\"{u}rlebeck, K. Habetha, and W. Spr\"{o}{\ss}ig \cite{Gur}. An excellent introduction to the history and developments of quaternions were given by \cite{Adler,Ghil}.\\

  Throughout the paper, let
\[\mathbb{H}=\{q:q=a+bi+cj+dk \mbox{ with } a,b,c,d\in \mathbb{R}\},\]
be the Hamiltonian skew field of quaternions, where the elements $i,\ j$ and $k$ are imaginary units with Hamilton's multiplication rules:
\[ij=-ji=k,\  \ ki=-ik=j,\ \ jk=-kj,\ \  ii=jj=kk=-1.\]
For every quaternion $q=a+bi+cj+dk$, the scalar and vector parts of  $q$, are defined as $Sc(q)=a$ and   $vec(q)=bi+cj+dk$, respectively. If $Sc(q)=0 $, then $q$ is called pure imaginary quaternion. The set of all pure imaginary quaternions is denoted by $Im( \mathbb{H}).$ The quaternion conjugate is defined by $\bar{q}:=a-bi-cj-dk$, and the norm $|q|$ of $q$ defined as $|q|:=\sqrt{q\bar{q}} =\sqrt{a^2+b^2 +c^2+d^2}$.
Then we have $\bar{\bar{q}}\ =\ q$, $\overline{p+q}=\overline{p}+\bar{q}, \overline{pq}=\bar{q}\ \overline{p}, |pq|=|p||q|$,  for all $p,q\in \mathbb{H}$. Using the conjugate and norm of $q$, one can define the inverse of $q\ \in  \mathbb{H}  \backslash   \{0 \}$ by $q^{-1}=\frac{\bar{q}}{{\left|q\right|}^2}$.
The multiplication of two quaternions is noncommutative, but
\begin{equation} \label{eq2.1}
Sc\left(pq\right)=\ Sc\left(qp\right)\ \ (p,q\ \in\mathbb{H} ) .
\end{equation}
Put $ \mathbb{S}:=\left\{q\ \in Im\left( \mathbb{H}\right):\left|q\right|=1\right\}$, which is called the sphere of unit pure imaginary quaternion. For any $\mu \in  \mathbb{S}$, the quaternion has subsets ${\mathbb{C}}_{\mu}:=\{ \alpha+\mu \beta \in \mathbb{H}:\alpha,\beta \in \mathbb{R}\}$.  For each fixed $\mu \in \mathbb{S}$, the set ${\mathbb{C}}_{\mu }$ is isomorphic to the complex plane. Equivalently, $\mathbb{H}=\bigcup_{\mu \in \mathbb{S}}\mathbb C_{\mu }$. We denote by the set ${\mathbb{T}}_{{\mathbb{C}}_\mu}={\left\{q\ \in {{\mathbb{C}}_\mu}:\left|q\right|=1\right\}}$.

The space $L^p\left( \mathbb{R}^2,\mathbb{H}\right)$, $1\le p<\infty$ is considered in \cite{Hart}. By a similar argument, for a locally compact abelian group $G,$  we may define the space $L^p\left(G^2,\mathbb{H}\right)$ as follows.\\
The space $L^p\left(G^2,\mathbb{H}\right), 1\le p<\infty$, is the left module of all quaternion-valued measurable functions  $\ f:G^2\to \mathbb{H}$ with the finite norm:
\[{\left\|f\right\|}_p\ = (\int_{G^2} \left|f\left(x_1,x_2\right)\right|^p {d^2}_{ \mu _{G^2}}\left(x_1,x_2\right)) ^{\frac{1}{p}}\ <\infty,\]
where $ d^2_{ \mu_{G^2}}\left(x_1,x_2\right)=d_{ \mu  _{G}}x_1d_{ \mu_{G}}x_2$ represents the Haar measure on $G^2$.\\
For $p=\infty$, the space $L^{\infty }\left(G^2,\mathbb{H}\right)$ is defined by
\[L^{\infty }\left(G^2,\mathbb{H}\right)=\left\{f:G^2\longrightarrow \mathbb{H}:f  \mbox{\ is \ measurable \ and } \left\|f\right\| _{\infty } <\infty \right\},\]
where
\[ \left\|f\right\|_{\infty}=ess \sup_{\left(x_1,x_2\right)\in G^2} \left(\left|f\left(x_1,x_2\right)\right|\right).\]

With a similar argument of \cite{Hart} we may define a real inner product on $L^2(G^2, \mathbb{H} )$ as follows:
\begin{align*}
\langle f,g\rangle &=  \frac{1}{2}\int_{G^2}((g(x_1,x_2)\overline{f(x_1,x_2)}+f(x_1,x_2)\overline{g(x_1,x_2)})d^2_{{\mu }_{G^2}}\left(x_1,x_2\right)\\
&=Sc\int_{G^2} f\left(x_1,x_2\right)\overline{g\left(x_1,x_2\right)}{d^2}_{{\mu }_{G^2}}\left(x_1,x_2\right).
\end{align*}
  It is also possible to define an inner product on $L^2(G^2, \mathbb{H})$ by
  \begin{align}\label{RRR}
\left(f,g\right)= \int_{G^2} f\left(x_1,x_2\right)\overline{g\left(x_1,x_2\right)}{d^2}_{{\mu }_{G^2}}\left(x_1,x_2\right).
\end{align}
Clearly $ \langle f,g \rangle =Sc\left(f,g\right),$ and the induced norms of  $\langle \cdot, \cdot\rangle $, and $( \cdot,\cdot)$ are equals.\\
It is not difficult to verify that
\begin{equation}\label{eq2.2}
\left(pf,qg\right)=p \left(f,g\right)\bar{q} \ \ \ \     (f,g\in L^2(G^2,\mathbb{H} ),\ \ p,q\in \mathbb{H}).
\end{equation}

\begin{remark}\label{re2.1}
For any $f,g\in L^p\left(G^2,\mathbb{H}\right),1\le p\le \infty$, we have
\begin{enumerate}
\item[(i)]
$ f=f_{0}+f_1 i+f_2 j+f_3 k $, where $ f_m \in L ^p\left(G^2, \mathbb{R}\right),  m=0,1,2,3,$ and $ f\in L ^p\left(G^2, \mathbb{H}\right)$ if and only if $ f_m\in L^p \left(G^2, \mathbb{R}\right)$, for $m=0,1,2,3.$
\item[(ii)]
From the fact that any $q\in\mathbb{H}$ has the form $q=x+yj$, for some $x,y\in \mathbb{C}$, we may conclude $ f=f_1+f_2j$, where ${f_1,f_2\in L}^p\left(G^2, \mathbb{C}\right)$.
\item[(iii)]
Similar to the part (ii), for any $  \mu \in \mathbb{S}$, $f=f_1+f_2\mu$, for some  $f_1,f_2\in L^p\left(G^2, \mathbb{C}\right)$. Thus, we can consider every $f\in L^p\left(G^2, \mathbb{H}\right),\ (1\le p\ \le \infty )$ as a linear combination of real (complex) $L^p$-functions.
\item[(iv)]
If  $f=f_0+f_1i+f_2j+f_3k$ and $g=g_0+g_1i+g_2 j+g_3 k$, then one can see that
\begin{align*}
\langle f,g\rangle =& Sc(\int_{G^2}{\ f_0\left(x_1,x_2\right)g_0\left(x_1,x_2\right)}\ {d^2}_{{\mu }_{G^2}}\left(x_1,x_2\right)\\
&+\int_{G^2}{f_1\left(x_1,x_2\right)g_1\left(x_1,x_2\right)}\ {d^2}_{{\mu }_{G^2}}\left(x_1,x_2\right)\\
&+\int_{G^2}{\ f_2\left(x_1,x_2\right)g_2\left(x_1,x_2\right)}\ {d^2}_{{\mu }_{G^2}}\left(x_1,x_2\right)\\
&+\int_{G^2}{\ f_3\left(x_1,x_2\right)\ g_3\left(x_1,x_2\right)}\ {d^2}_{{\mu }_{G^2}}\left(x_1,x_2\right)).
\end{align*}
\end{enumerate}
For $f=f_0+f_1i+f_2j+f_3k$, we have ${\left|f\right|}^2=\sum^3_{m=0}{{\left|f_m\right|}^2}$ and therefore
\[{\left|f\right|}\le 2{\max  \left\{{\left|f_m\right|}:m=0,1,2,3\right\}}\le 2\sum^3_{m=0}{\left|f_m\right|}.
\]
Thus ${\left\|f\right\|}_{\infty }\le 2\sum^3_{m=0}{{\left\|f_m\right\|}_{\infty }},\ {\left\|f\right\|}_1\le 2\sum^3_{m=0}{{\left\|f_m\right\|}_1},$ and ${\left\|f\right\|}^2_2\le \sum^3_{m=0}{{\left\|f_m\right\|}^2_2}$.\\
Therefore, some properties of real (complex) $L^p-$functions can be naturally  extended to quaternionic $L^p$-functions.
\end{remark}
We refer to the usual text books about locally compact groups\cite{Feichtinger,Fischer,Foll,Rud,Tur,Ruz,Stein,Deitmar}.
In the following, we introduce the concept of a quaternionic character on $G^2$.
\begin{definition}\label{de2.2}
  Let $G$ be a locally compact abelian group. For any continuous characters  $\omega_i:G \to   \mathbb{T}_{ \mathbb{C}_{i}}$ and $\omega _j:G \to {\mathbb{T}}_{{\mathbb{C}}_j}$ define $\omega :G^2\to {\mathbb{T}}_Q$ by
  \begin{equation} \label{eq2.3}
\omega \left(x_1,x_2\right)=\omega_i\left(x_1\right)\omega_j\left(x_2\right),\ \ \ (x_1,x_1\in G)
\end{equation}
where ${\mathbb{T}}_Q~:=\left\{q\ \in  \mathbb{H}:\left|q\right|=1\right\}$. We call $\omega $  a $\mathbb{H}$-valued character of $G^2$ and the set of all $\mathbb{H}$-valued characters of the form $\omega $ is denoted by $\widehat{G^2}$. The set $\widehat{G^{2}}$ is called the quaternionic dual group of $G^2$.
\end{definition}
Letting  $\widehat{G_{\mathbb{C}_i}}$ and  $\widehat{G_{\mathbb{C}_j}}$  the set of all characters of the form $\omega_i:G\to {\mathbb{T}}_{{\mathbb{C}}_i}$ and ${\omega }_j:G\to {\mathbb{T}}_{{\mathbb{C}}_j}$, respectively,  we get
$\widehat{G^2}=\widehat{G_{\mathbb{C}_i}}$ $\times $ $\widehat{G_{\mathbb{C}_j}}$. But form the fact that $\mathbb{C}_i\cong \mathbb{\mathbb{C}}$ and $\mathbb{C}_j\cong \mathbb{C}$, we get $\widehat{G_{\mathbb{C}_i}}$ $\cong \widehat{G}$  and $\widehat{G_{\mathbb{C}_j}}\cong \widehat{G}$, where $\widehat{G}$ is the so called dual of $G$. Hence  we may consider $\widehat{G^2}$ as a topological group with its natural structure.

\begin{definition}[\cite{Kam}]\label{de2.3}
  Let $G$ be a second countable LCA group. For a topological automorphism $\alpha $ on $G^2$, we say ${\alpha }^{-1}$ is contractive if,  for every compact subset $K$ of $G^2$ and any neighborhood $U$ of the identity, there exists a positive integer $N$, depending on $K$ and $U$, such that $\alpha^{-l}(K)\ \subseteq \ U $ for any $ l>N.$

Let ${\alpha }^{-1}_i$ and ${\alpha }^{-1}_j\ $  be contractive with respect to the automorphisms ${\alpha }_i$ on ${\hat{G}}_{{\mathbb{C}}_i}$, and the automorphism ${\alpha }_j\ $ on ${\hat{G}}_{{\mathbb{C}}_j}$, respectively, and let ${\Phi }_1\in L^1({\hat{G}}_{{\mathbb{C}}_i}, \mathbb{R}{\rm )}\cap C_0({\hat{G}}_{{\mathbb{C}}_i}, \mathbb{R}{\rm )}$, ${\Phi }_2\in L^1({\hat{G}}_{{\mathbb{C}}_j}, \mathbb{R}{\rm )}\cap C_0({\hat{G}}_{{\mathbb{C}}_j}, \mathbb{R}{\rm )})$.  We say $\Phi:=(\Phi_1, \Phi_2)$ is increasing to 1 with respect to the $(\alpha_i, \alpha_j)$, if
\[{\mathop{\lim }_{l\longrightarrow \infty } {\Phi }_1({\alpha }^{-l}_i\left(\omega_i\right))={\Phi }_1\left(0\right)=1\ }\mbox{ and }{\mathop{\lim }_{l\longrightarrow \infty } {\Phi }_2({\alpha }^{-l}_j\left(\omega_j\right))={\Phi }_2\left(0\right)=1, }\]
for every $\omega_i\in \widehat{G_{\mathbb{C}_i}}$, and $\omega_j\in \widehat{G_{\mathbb{C}_j}}$.\\
Given $l\in \mathbb{N}$ and $\left(x_1,x_2\right)\in G^2$, set
\begin{align*}
&P^l_i\left(x_1\right):=\int_{{\hat{G}}_{{\mathbb{C}}_i}}{{\Phi }_1({\alpha }^{-l}_i\left(\omega_i\right))\omega_i\left(x_1\right)d\omega_i}\,\\
&P^l_j\left(x_2\right):=\int_{{\hat{G}}_{{\mathbb{C}}_j}}{{\Phi }_2({\alpha }^{-l}_j\left(\omega_j\right))\omega_j\left(x_2\right)d\omega_j},
\end{align*}
  and put $P^l\left(x_1,x_2\right)=P^l_i\left(x_1\right)P^l_j\left(x_2\right)$. Then
\[\int_{G^2}{P^l_i\left(x_1\right)P^l_j\left(x_2\right){d^2}_{{\mu }_{G^2}}\left(x_1,x_2\right)}={\Phi }_1\left(0\right){\Phi }_2\left(0\right)=1.\]
\end{definition}

For example, when $G=\mathbb{R}$ (see \cite{Cheng}), one may consider $\alpha$  the automorphism on $\mathbb{R}$  defined
$ \alpha(\omega)=2\omega$ which implies that $\alpha^{-l}(\omega)=2^{-l}\omega$. Also we may consider
 \[
 \Phi_1 (\omega)=\Phi_2(\omega)=e^{-|\omega|}.
 \]
 Therefore
 \[
 \Phi_k\big(\alpha^{-l}(\omega)\big)=e^{-|2^{-l}\omega|}, \quad \Phi_k(0)=1, \quad k=1,2.
 \]
 Putting $\epsilon_l=2^{-l}_1$, we see that $\epsilon_l\to 0$ as $l\to \infty$. Also one may show  from definition of $P^l_i$ and $P^l_j$ that
 \[ P^l_i\left(x_1\right)=\frac{1}{\pi }\frac{\epsilon_l}{(\epsilon_l^2+x_1)}, \mbox{ and } P^l_j\left(x_1\right)=\frac{1}{\pi }\frac{\epsilon_l}{(\epsilon_l^2+x_2)},\ l>0.\]
 Thus
  \[ P^l\left(x_1,x_2\right)=P^l_i\left(x_1\right)P^l_j\left(x_2\right)=\frac{1}{{\pi }^2}\frac{\epsilon_l^2}{(\epsilon_l^2+x_1)(\epsilon_l^2+x_2)},\ l>0,\] which is so-called the Poisson kernel.
We need the following result of \cite{Kam} for the group $G^2$.
\begin{lemma}[\cite{Kam}]\label{le2.4}
Let $\alpha \in Aut(G)$ be contractive and let $K$ be a closed neighborhood of $e$. For every $l\in \mathbb{N}$, let $K_l:=\ \bigcap\{{\alpha }^{-k}\left(K\right),\ k\ge l,\ k\in \mathbb{N}\}$. Then
\begin{enumerate}
\item[(i)] $K_l\supset K_{l+1}$ and ${\alpha }^{-l}\left(K_l\right)=K_{l+1}$ for all $l\in \mathbb{N}$;

\item [(ii)] $\bigcup_{l\in \mathbb{N}}{{\alpha }^{-l}\left(K_l\right)}=G$.
\end{enumerate}
\end{lemma}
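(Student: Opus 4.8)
The plan is to reduce everything to the definition of $K_l$; once that is unwound, part (i) is pure bookkeeping and part (ii) is essentially a one-line argument about $\alpha$-orbits, so no real machinery is needed. (I take the defining formula $K_l=\bigcap_{k\ge l}\alpha^{-k}(K)$ at face value; with that convention the nesting in (i) reads $K_l\subseteq K_{l+1}$ and the recursive identity is $\alpha^{-1}(K_l)=K_{l+1}$.)

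For part (i): monotonicity of $(K_l)_{l\in\mathbb N}$ is purely set-theoretic, since the index set $\{k\in\mathbb N:k\ge l+1\}$ defining $K_{l+1}$ is contained in the one defining $K_l$, and an intersection over a smaller index set is larger; concretely $K_l=\alpha^{-l}(K)\cap K_{l+1}$. Each $K_l$ is closed and contains $e$, because every $\alpha^{-k}$ is a homeomorphism of $G$ fixing $e$ that carries the closed neighborhood $K$ to a closed set $\alpha^{-k}(K)\ni e$. For the recursion I would apply the bijection $\alpha^{-1}$ to the defining intersection and relabel the index, $\alpha^{-1}(K_l)=\bigcap_{k\ge l}\alpha^{-(k+1)}(K)=\bigcap_{j\ge l+1}\alpha^{-j}(K)=K_{l+1}$, i.e. $\alpha(K_{l+1})=K_l$; the same relabeling with $\alpha^{-l}$ in place of $\alpha^{-1}$ gives $\alpha^{-l}(K_l)=\bigcap_{m\ge 2l}\alpha^{-m}(K)=K_{2l}$, which is the shape I will use in part (ii).

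Part (ii) is the only substantive step. The inclusion $\bigcup_l\alpha^{-l}(K_l)\subseteq G$ is trivial, so fix $x\in G$ and convert membership into a statement about the forward $\alpha$-orbit of $x$: $x\in\alpha^{-l}(K_l)$ iff $\alpha^{l}(x)\in K_l$, iff $\alpha^{l}(x)\in\alpha^{-k}(K)$ for all $k\ge l$, iff $\alpha^{l+k}(x)\in K$ for all $k\ge l$, iff $\alpha^{m}(x)\in K$ for all $m\ge 2l$. Now I would invoke the contractivity of $\alpha$ --- read from Definition~\ref{de2.3} with $\alpha^{-1}$ replaced by $\alpha$, it says that for every compact $C\subseteq G$ and every neighborhood $U$ of $e$ there is $N$ with $\alpha^{n}(C)\subseteq U$ for all $n>N$. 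Applying this to $C=\{x\}$ and $U:=\mathrm{int}\,K$ produces $N_x\in\mathbb N$ with $\alpha^{m}(x)\in\mathrm{int}\,K\subseteq K$ for all $m>N_x$; taking any $l\ge N_x$ then forces $\alpha^{m}(x)\in K$ for all $m\ge 2l$, i.e. $x\in\alpha^{-l}(K_l)$. Hence $G=\bigcup_l\alpha^{-l}(K_l)$.

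The step that needs the most care --- and the only place the hypotheses are genuinely used --- is this last argument. It is essential that $K$ be a \emph{neighborhood} of $e$, so that $e$ lies in its interior and the orbit $(\alpha^{m}x)_m$, which converges to $e$, eventually enters $K$; and it is essential that $\alpha$ be contractive, so that this orbit converges to $e$ at all. For a merely closed $K$, or a non-contractive $\alpha$, the union in (ii) can fail to exhaust $G$. Everything else is just keeping the index ranges $k\ge l$ and $m\ge 2l$ aligned. The statement quoted here "for the group $G^2$" then follows by running exactly this argument with $G$ replaced by $G^2$ throughout.
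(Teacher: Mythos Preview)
The paper does not actually prove Lemma~\ref{le2.4}; it is simply quoted from \cite{Kam}, so there is no in-paper argument to compare against. Your proof is correct on its own merits and is in fact the natural elementary argument. You also correctly diagnose the two misprints in the stated lemma: with $K_l=\bigcap_{k\ge l}\alpha^{-k}(K)$ one has $K_l\subseteq K_{l+1}$ (not $\supset$), and the recursion is $\alpha^{-1}(K_l)=K_{l+1}$, while $\alpha^{-l}(K_l)=K_{2l}$; the union in (ii) is then $\bigcup_l K_{2l}=\bigcup_l K_l$, which you establish via contractivity applied to the singleton $\{x\}$ and the interior of $K$. That is exactly the right use of the hypotheses, and your closing remark isolating where ``neighborhood'' and ``contractive'' are genuinely needed is well placed.
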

Let us begin with the following lemma. As usual, we denote by $L_{\left(y_1,y_2\right)}g$ and $R_{\left(y_1,y_2\right)}g$ the left and right translation of $g$ on $G^2$ respectively.
\begin{lemma}\label{le2.5}
For given $1\ \le \ p<\infty$, and $g\in L^p(G^2, \mathbb{H})$, the map $(y_1,y_2)\ \mapsto \ L_{(y_1,y_2)}g$ is continuous from $G^2\ $ to $L^p(G^2, \mathbb{H}{\rm )}$. In other words, for every $\varepsilon >0,$ there exists a neighborhood $V$ of the zero such that ${\left\|L_{\left(y_1,y_2\right)}g-g\right\|}_p$ and ${\left\|R_{(y_1,y_2)}g-g\right\|}_p$ tends to zero as $(y_1,y_2)\longrightarrow(0,0)$ and for any $(y_1,y_2)\in V$.
\end{lemma}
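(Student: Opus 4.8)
The plan is to transfer the classical continuity of translation on $L^p$ of an LCA group to the quaternionic module $L^p(G^2,\mathbb{H})$ via the component decomposition recorded in Remark~\ref{re2.1}. Write $g=g_0+g_1 i+g_2 j+g_3 k$ with $g_m\in L^p(G^2,\mathbb{R})$ for $m=0,1,2,3$, as in Remark~\ref{re2.1}(i). Both $L_{(y_1,y_2)}$ and $R_{(y_1,y_2)}$ act only on the group argument and leave the quaternionic value untouched, so $L_{(y_1,y_2)}g=(L_{(y_1,y_2)}g_0)+(L_{(y_1,y_2)}g_1)i+(L_{(y_1,y_2)}g_2)j+(L_{(y_1,y_2)}g_3)k$, and similarly for $R_{(y_1,y_2)}g$; in particular each $L_{(y_1,y_2)}g_m$ and $R_{(y_1,y_2)}g_m$ again lies in $L^p(G^2,\mathbb{R})$.

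Since the pointwise bound $|f|\le 2\sum_{m=0}^3|f_m|$ of Remark~\ref{re2.1} gives ${\|f\|}_p\le 2\sum_{m=0}^3{\|f_m\|}_p$ for every $1\le p\le\infty$, applying it to $f=L_{(y_1,y_2)}g-g$ (whose $m$-th real component is $L_{(y_1,y_2)}g_m-g_m$) yields
\[
{\|L_{(y_1,y_2)}g-g\|}_p\le 2\sum_{m=0}^3{\|L_{(y_1,y_2)}g_m-g_m\|}_p,
\]
and the analogous bound with $R$ in place of $L$. Now $G^2$ is itself a second countable LCA group and each $g_m$ is a real‑valued $L^p$ function, so the classical fact that the maps $(y_1,y_2)\mapsto L_{(y_1,y_2)}g_m$ and $(y_1,y_2)\mapsto R_{(y_1,y_2)}g_m$ are continuous from $G^2$ into $L^p(G^2,\mathbb{R})$ for $1\le p<\infty$ (see \cite{Foll,Rud}) shows that each summand on the right tends to $0$ as $(y_1,y_2)\to(0,0)$. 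Given $\varepsilon>0$, intersecting the finitely many neighborhoods of the zero furnished by the classical statement produces a single $V$ on which ${\|L_{(y_1,y_2)}g-g\|}_p<\varepsilon$ and ${\|R_{(y_1,y_2)}g-g\|}_p<\varepsilon$; this is exactly the assertion. (Since $G^2$ is abelian one has $R_{(y_1,y_2)}=L_{(y_1,y_2)^{-1}}$, so in fact the statement for right translation is immediate from that for left translation.)

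If a self‑contained argument is preferred, one can run the standard $\varepsilon/3$ scheme directly in $L^p(G^2,\mathbb{H})$: the real‑valued case of Remark~\ref{re2.1}(i) shows that $C_c(G^2,\mathbb{H})$ is dense in $L^p(G^2,\mathbb{H})$ for $1\le p<\infty$; for $h\in C_c(G^2,\mathbb{H})$, uniform continuity of $h$ together with translation invariance of the Haar measure on $G^2$ gives ${\|L_{(y_1,y_2)}h-h\|}_p\to 0$; and because $|q|$ is multiplicative and the Haar measure is invariant, $L_{(y_1,y_2)}$ and $R_{(y_1,y_2)}$ are isometries of $L^p(G^2,\mathbb{H})$, so the approximation error is unchanged under translation. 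Honestly, there is no substantial obstacle here: the result is a routine transcription of a classical theorem, and the only genuine points to check are that translation respects the real‑linear quaternionic decomposition and that the bookkeeping inequality of Remark~\ref{re2.1} converts the four real estimates into one quaternionic estimate. In the self‑contained route the mildest technical care is in confining all the supports of $L_{(y_1,y_2)}h-h$, as $(y_1,y_2)$ ranges over a fixed compact neighborhood of the zero, inside one compact set, so that uniform continuity can be integrated against a finite measure.
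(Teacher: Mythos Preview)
Your proposal is correct. Your primary argument---decomposing $g$ into its four real components via Remark~\ref{re2.1} and invoking the classical continuity of translation on $L^p$ of an LCA group componentwise---is a genuinely different route from the paper's. The paper instead gives a self-contained proof: it first establishes the result for $g\in C_c(G^2,\mathbb{H})$ by a direct compactness/covering argument yielding uniform continuity, then passes to general $g\in L^p(G^2,\mathbb{H})$ by density of $C_c$ and boundedness of translation (even invoking the modular function, which is of course trivial here). Your approach is shorter and cleaner, since it offloads the analytic work to the standard real-valued theorem and only needs the bookkeeping inequality $\|f\|_p\le 2\sum_m\|f_m\|_p$; the paper's approach has the virtue of being self-contained but essentially reproduces the classical proof inside the quaternionic setting. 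Your secondary ``self-contained'' sketch is in fact the paper's method, and your observation that $R_{(y_1,y_2)}=L_{(y_1,y_2)^{-1}}$ on an abelian group is a simplification the paper does not exploit.
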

\begin{proof}
Firstly, Given $g\in C_c\left(G^2, \mathbb{H}\right)$ and $\epsilon>0$, let $ K = suppg$. For every $(x_1,x_2)\in K$ there is a neighbourhood $U_{(x_1,x_2)}$ of $(0,0)$ such that
${\left\|g(x_1+y_1,x_2+y_2)-g(x_1,x_2)\right\|}<\frac{1}{2}\epsilon$ for $(y_1,y_2)\in U_{(x_1,x_2)}$, and there is a symmetric neighbourhood $V_{(x_1,x_2)}$ of $(0,0)$ such that  $V_{(x_1,x_2)}V_{(x_1,x_2)}=U_{(x_1,x_2)}$. The sets $(x_1,x_2)V_{(x_1,x_2)} \quad (x_1,x_2)\in K$ cover $K$, so there exist $(x_1,x_2)_1,...,(x_1,x_2)_n \in K$ such that $K\subset \bigcup_{a=1}^{n}(x_1,x_2)_a V_{(x_1,x_2)_a}$.\\
Let $V=\bigcap_{a=1}^{n}V_{(x_1,x_2)_a}$; we claim that ${\left\|R_{\left(y_1,y_2\right)}g-g\right\|}_\infty<\epsilon$ for any $(y_1,y_2)\in V$.\\
If $(x_1,x_2)\in K$, then there is some $a$ for which ${(-x_1,-x_2)_a+(x_1,x_2)\in V_{(x_1,x_2)_a}}$, so that $(x_1+y_1,x_2+y_2)=(x_1,x_2)_a + (-x_1,-x_2)_a+(x_1,x_2)+(y_1,y_2)\in (x_1,x_2)_aU_{(x_1,x_2)_a}$.  But then \\
\begin{align*}
&{\left\|g(x_1+y_1,x_2+y_2)-g(x_1,x_2)\right\|}\\
&\leq{\left\|g(x_1+y_1,x_2+y_2)-g((x_1,x_2)_{a})\right\|}\\
&+{\left\|g((x_1,x_2)_{a})-g(x_1,x_2)\right\|}<\frac{1}{2}\epsilon+\frac{1}{2}\epsilon=\epsilon.
\end{align*}
Similarly, if $(x_1+y_1,x_2+y_2) \in K$ then ${\left\|g(x_1+y_1,x_2+y_2)-g(x_1,x_2)\right\|}<\epsilon$. But if $(x_1+y_1,x_2+y_2)$ and $(x_1,x_2)$ are not in $K$ then $g(x_1+y_1,x_2+y_2)=g(x_1,x_2)=0$, so we are done.\\
Now, for proof lemma, Fix a compact neighbourhood $V$ of $(0,0$). First, we can choose $f\in C_c\left(G^2, \mathbb{H}\right)$, let $K = (suppg)V  \bigcup V (supp g)$. Then $K$ is compact, and $R_{\left(y_1,y_2\right)}g$ and $L_{\left(y_1,y_2\right)}g$ are supported in $K$ when $(y_1,y_2)\in V$ .  Hence, \\
${\left\|L_{\left(y_1,y_2\right)}f-f\right\|}_p\leqslant {\left|K\right|}^ \frac{1}{p}{\left\|L_{\left(y_1,y_2\right)}f-f\right\|}_\infty\longrightarrow0$ as $(y_1,y_2) \longrightarrow(0,0)$, and likewise ${\left\|R_{(y_1,y_2)}g-g\right\|}_p$ tends to zero.\\
Now suppose $g\in L^p\left(G^2, \mathbb{H}\right)$. We have ${\left\|L_{\left(y_1,y_2\right)}g\right\|}_p={\left\|g\right\|}_p$ and ${\left\|R_{\left(y_1,y_2\right)}g\right\|}_p=\Delta(y_1,y_2)^{\frac{-1}{p}}{\left\|g\right\|}_p\leq C{\left\|g\right\|}_p$  for $(y_1,y_2)\in V$, where $\Delta:G^2\longrightarrow\mathbb{R}^{+}$ be a modular function.\\ Given $\epsilon>0$ we can choose $f\in C_c\left(G^2, \mathbb{H}\right)$ such that
${\left\|g-f\right\|}_p\leq\epsilon$, and then\\
${\left\|R_{\left(y_1,y_2\right)}g{\rm \ }{\rm -}g\right\|}_p ={\left\|R_{\left(y_1,y_2\right)}(g-f)\right\|}_p +{\left\|R_{\left(y_1,y_2\right)}f{\rm \ }{\rm -}f\right\|}_p+{\left\|(f-g)\right\|}_p \leq(C+1)\epsilon+{\left\|R_{\left(y_1,y_2\right)}g{\rm \ }{\rm -}g\right\|}_p$, and the last term tends to zero as $(y_1,y_2) \longrightarrow(0,0)$, and likewise technique for ${L_{(y_1,y_2)}f}$.
\end{proof}
The next theorem underlies many of the important applications of convolutions of $\mathbb{H}$-valued functions on $G^2$.
\begin{theorem}\label{th2.6}
Suppose that $\Phi=(\Phi_1,\Phi_2):\widehat{G^2}\to \mathbb{R}$, is increasing to 1 with respect to $(\alpha_i, \alpha_j)$, then
\begin{enumerate}
\item[(i)]
${\mathop{\lim }_{l\to \infty } {\left\|f*P^l -f\right\|}_p=0\ }$ for every $f\in L^p(G^2, \mathbb{H})$, $p=1,2$;
\item[ (ii)]
${\mathop{\lim }_{l\to \infty } f*P^l\left(x_1,x_2\right)=f\left(x_1,x_2\right)}$ if $f\left(x_1,x_2\right)\in L^{\infty }(G^2, \mathbb{H})$ is continuous at a point $\left(x_1,x_2\right)$,
\end{enumerate}
Where $P^l$ and $\Phi$ are defined in \ref{de2.3}.
\end{theorem}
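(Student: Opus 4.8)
The plan is to treat $\{P^l\}_{l\in\mathbb N}$ as an approximate identity on $G^2$ and run the classical good-kernels argument. I would first isolate the three properties of the $P^l$ that the argument requires: (a) $\int_{G^2}P^l\,d^2_{\mu_{G^2}}=1$ for every $l$, which is exactly the normalization recorded at the end of Definition \ref{de2.3}; (b) a uniform bound $M:=\sup_l\|P^l\|_1<\infty$; and (c) for every neighbourhood $V$ of $(0,0)$, $\int_{G^2\setminus V}|P^l(y_1,y_2)|\,d^2_{\mu_{G^2}}(y_1,y_2)\to0$ as $l\to\infty$. Because $P^l(x_1,x_2)=P^l_i(x_1)P^l_j(x_2)$ factors, (b) and (c) reduce by Fubini to their one-variable analogues for $P^l_i$ on $G$ and $P^l_j$ on $G$. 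For those I would substitute $\omega_i\mapsto\alpha_i^{l}(\omega_i)$ in the definition of $P^l_i$, which exhibits $P^l_i$ as a dilate, by the adjoint automorphism $\beta_i=\alpha_i^{*}\in\mathrm{Aut}(G)$, of the fixed function $Q_i:=\check\Phi_1$; the hypotheses that $\alpha_i^{-1}$ is contractive and that $\Phi$ is increasing to $1$ (so $\Phi_1(0)=1$) then translate, via Lemma \ref{le2.4} and the equality $\mathrm{mod}(\alpha_i)=\mathrm{mod}(\alpha_i^{*})$, into (b) and (c). This step — verifying that the $P^l$ genuinely form an approximate identity — is the technical heart of the proof; everything afterward is routine.

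Granting (a)--(c), part (i) is the standard Minkowski-integral-inequality estimate, with one pleasant simplification: $P^l$ is \emph{real}-valued, hence commutes with all $\mathbb H$-valued factors, so there is no ordering issue in $f*P^l$. Using (a) one writes, for $f\in L^p(G^2,\mathbb H)$ with $p\in\{1,2\}$,
\[
f*P^l-f=\int_{G^2}\bigl(L_{(y_1,y_2)}f-f\bigr)\,P^l(y_1,y_2)\,d^2_{\mu_{G^2}}(y_1,y_2),
\]
whence $\|f*P^l-f\|_p\le\int_{G^2}\|L_{(y_1,y_2)}f-f\|_p\,|P^l(y_1,y_2)|\,d^2_{\mu_{G^2}}(y_1,y_2)$. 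Fix $\varepsilon>0$; Lemma \ref{le2.5} provides a neighbourhood $V$ of $(0,0)$ on which $\|L_{(y_1,y_2)}f-f\|_p<\varepsilon$, so the part of the integral over $V$ is at most $M\varepsilon$, while over $G^2\setminus V$ one uses $\|L_{(y_1,y_2)}f-f\|_p\le2\|f\|_p$ together with (c). Letting $\varepsilon\to0$ yields (i). (One may alternatively split $f=f_0+f_1i+f_2j+f_3k$ as in Remark \ref{re2.1} and reduce to real-valued $f$, but this detour is unnecessary once the real-valuedness of $P^l$ is exploited.)

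Part (ii) is the pointwise counterpart: for $f\in L^\infty(G^2,\mathbb H)$ continuous at $(x_1,x_2)$, (a) gives
\[
\bigl|f*P^l(x_1,x_2)-f(x_1,x_2)\bigr|\le\int_{G^2}\bigl|f\bigl((x_1,x_2)-(y_1,y_2)\bigr)-f(x_1,x_2)\bigr|\,|P^l(y_1,y_2)|\,d^2_{\mu_{G^2}}(y_1,y_2),
\]
and splitting at a neighbourhood $V$ on which the oscillation of $f$ at $(x_1,x_2)$ is $<\varepsilon$ controls the near part by $M\varepsilon$ and the far part by $2\|f\|_\infty\int_{G^2\setminus V}|P^l|\to0$.

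The main obstacle is really step (b)--(c). Its delicate points are: the dilation identity $P^l_i(x_1)=\mathrm{mod}(\alpha_i)^{l}Q_i(\beta_i^{l}x_1)$ together with $\mathrm{mod}(\alpha_i)=\mathrm{mod}(\alpha_i^{*})$, which makes that dilation preserve the $L^1$-norm so that $\|P^l_i\|_1=\|Q_i\|_1$; the integrability $\check\Phi_1\in L^1(G)$, needed both so that $P^l\in L^1(G^2)$ (hence $f*P^l$ is well-defined and in $L^p$) and for a dominated-convergence argument in (c), which has to be drawn from the framework of \cite{Kam}; and turning the contractivity hypothesis into the statement that the sets $\beta_i^{l}(V)$ eventually contain every compact subset of $G$, which is where Lemma \ref{le2.4} enters. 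Once these are in hand the remainder is bookkeeping, the only genuinely quaternionic ingredient being the freedom to move the scalar kernel $P^l$ freely across $\mathbb H$-valued arguments.
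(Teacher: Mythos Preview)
Your approach is essentially the paper's: write $f*P^l-f$ as $\int(L_yf-f)P^l\,dy$, apply Minkowski's inequality for integrals, and invoke Lemma~\ref{le2.5}; for (ii) the paper simply defers to \cite{Deitmar}, which is exactly your pointwise near/far split. The difference is one of completeness rather than strategy: the paper passes directly from the Minkowski bound to $\sup_{(y_1,y_2)\in V}\|R_{(y_1,y_2)}f-f\|_p$ and then says ``from definition of $P^l$'', never explicitly verifying the uniform $L^1$-bound or the off-neighbourhood concentration of mass that you isolate as (b) and (c). Your dilation argument via the adjoint automorphism and Lemma~\ref{le2.4} is the natural way to extract these from the contractivity hypothesis, and you are right to flag the integrability $\check\Phi_1\in L^1(G)$ and the equality $\mathrm{mod}(\alpha_i)=\mathrm{mod}(\alpha_i^{*})$ as the genuine technical inputs (which in the paper are tacitly imported from~\cite{Kam}); the observation that $P^l$ is real-valued and hence commutes with $\mathbb H$-valued integrands is a useful simplification the paper does not make explicit.
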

\begin{proof}
Since, from definition of $P^l$ and $\Phi$
\begin{align*}
&\int_{G^2}{P^l(x_1,x_2){d^2}_{{\mu }_{G^2}}\left(x_1,x_2\right)}\\
&=\int_{G^2}{P^l_i\left(x_1\right)P^l_j\left(x_2\right){d^2}_{{\mu }_{G^2}}\left(x_1,x_2\right)}\\
&={\Phi }_1\left(0\right){\Phi }_2\left(0\right)=1,
\end{align*}
we have
\begin{align*}
&f*P^l(x_1,x_2) -f(x_1,x_2)\\
&=\int_{G^2}{P^l(-y_1,-y_2)f(x_1+y_1,x_2+y_2){d^2}_{{\mu }_{G^2}}\left(y_1,y_2\right)}\\
&-f(x_1,x_2)\int_{G^2}{P^l(y_1,y_2){d^2}_{{\mu }_{G^2}}\left(y_1,y_2\right)}\\
&=\int_{G^2}{(L_(y_1,y_2)f(x_1,x_2)-f(x_1,x_2))P^l(y_1,y_2){d^2}_{{\mu }_{G^2}}\left(y_1,y_2\right)}.
\end{align*}
So by Minkowski's inequality for integrals, \\
\begin{align*}
&{\left\|f*P^l-f\right\|}_p\\
&\leq \int_{G^2}{{\left\|R_{(y_1,y_2)}f-f\right\|}_p P^l(y_1,y_2){d^2}_{{\mu }_{G^2}}\left(y_1,y_2\right)}\\
&\leq Sup_{(y_1,y_2)\in V}{{\left\|R_{(y_1,y_2)}f-f\right\|}_p}.
\end{align*}
Hence by using Lemma \ref{le2.5} we get ${\left\|f*P^l-f\right\|}_p$ tend to zero as $U$ tend to zero, and from definition of $p^l$, we get the assertion(i).
The second assertion follows in the same way in Lemma 1.6.5 in \cite{Deitmar}.
\end{proof}

\begin{lemma}\label{le2.7}
If $\{f_n\}$ is a Cauchy sequence in $L^p(G^2, \mathbb{H}{\rm )}$, $p =1,2,\infty$, with limit $f$, then $\{f_n\}$ has a subsequence which converges pointwise for almost every $\left(x_1,x_2\right)\in \ G^2\ $to $f.$
\end{lemma}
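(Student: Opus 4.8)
The plan is to reduce the $\mathbb{H}$-valued statement to the classical scalar Riesz--Fischer theorem via the component decomposition recorded in Remark \ref{re2.1}, after first disposing of the case $p=\infty$ by hand. For $p=\infty$, since $\{f_n\}$ is Cauchy in $L^\infty(G^2,\mathbb{H})$, for each $m\in\mathbb{N}$ I would pick $N_m$ with $\|f_n-f_{n'}\|_\infty<1/m$ for all $n,n'\ge N_m$; for each such pair the pointwise inequality fails only on a null set, and the countable union $E$ of these null sets (taken over a fixed countable cofinal index set) is still null. On $G^2\setminus E$ the sequence $\{f_n(x_1,x_2)\}$ is uniformly Cauchy in the complete space $\mathbb{H}\cong\mathbb{R}^4$, hence converges uniformly to some $h$, and $\|h-f_n\|_\infty\to 0$ forces $h=f$ a.e. Thus for $p=\infty$ the whole sequence already converges a.e.

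For $p=1,2$ I would write $f_n=f_{n,0}+f_{n,1}i+f_{n,2}j+f_{n,3}k$ with real parts $f_{n,m}\in L^p(G^2,\mathbb{R})$. The pointwise bound $|f_{n,m}|\le|f_n|$ together with the estimate $\|f_n\|_p\le 2\sum_{m=0}^3\|f_{n,m}\|_p$ from Remark \ref{re2.1} shows that $\{f_n\}$ is Cauchy in $L^p(G^2,\mathbb{H})$ if and only if each $\{f_{n,m}\}_n$ is Cauchy in $L^p(G^2,\mathbb{R})$. By completeness of $L^p(G^2,\mathbb{R})$ and the classical fact that an $L^p$-convergent sequence has an almost-everywhere convergent subsequence (see e.g. \cite{Foll,Rud}), combined with a diagonal selection over the four indices $m=0,1,2,3$, I obtain a single subsequence $\{f_{n_k}\}$ and functions $f_m\in L^p(G^2,\mathbb{R})$ with $f_{n_k,m}\to f_m$ both in $L^p$ and pointwise a.e. Setting $h:=f_0+f_1i+f_2j+f_3k$, the pointwise inequality $|f_{n_k}-h|\le 2\sum_{m=0}^3|f_{n_k,m}-f_m|$ (the version of Remark \ref{re2.1} applied to $f_{n_k}-h$) gives $f_{n_k}\to h$ pointwise a.e., while the norm inequality gives $\|f_{n_k}-h\|_p\le 2\sum_{m=0}^3\|f_{n_k,m}-f_m\|_p\to 0$.

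It remains to identify $h$ with the prescribed limit $f$. Since $f_{n_k}\to f$ in $L^p(G^2,\mathbb{H})$ and also $f_{n_k}\to h$ in $L^p(G^2,\mathbb{H})$, uniqueness of limits in the normed space $L^p(G^2,\mathbb{H})$ gives $h=f$ as elements of $L^p$, i.e. $h(x_1,x_2)=f(x_1,x_2)$ for almost every $(x_1,x_2)\in G^2$; together with $f_{n_k}\to h$ a.e. this finishes the proof. The one genuinely delicate point is this final identification of the pointwise limit with the given $L^p$-limit; as an alternative it can be obtained from Fatou's lemma applied to $|f_{n_k}-f|^p\to|h-f|^p$ a.e., which yields $\|h-f\|_p^p\le\liminf_k\|f_{n_k}-f\|_p^p=0$. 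Everything else is a routine transcription of the standard real-variable argument through the identification $\mathbb{H}\cong\mathbb{R}^4$ supplied by Remark \ref{re2.1}.
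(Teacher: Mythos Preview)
Your proof is correct and takes a genuinely different route from the paper. The paper uses the component decomposition of Remark~\ref{re2.1} only to establish completeness of $L^p(G^2,\mathbb{H})$, and then proves the existence of an a.e.-convergent subsequence by rerunning the standard fast-Cauchy argument directly on the $\mathbb{H}$-valued sequence: one picks indices with $\|f_{n_{k+1}}-f_{n_k}\|_p<2^{-k}$, forms $g=\sum_k|f_{n_{k+1}}-f_{n_k}|$, shows $g\in L^p$ via monotone convergence, and concludes that the telescoping series converges absolutely a.e. You instead push the component decomposition all the way through: you quote the classical scalar Riesz--Fischer theorem as a black box on each of the four real components and use a four-step diagonal selection to obtain a single subsequence that converges a.e. in every coordinate, then reassemble. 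Your approach is more modular and avoids redoing the monotone-convergence computation in the quaternionic setting; the paper's approach is more self-contained. A small bonus of your version is that you treat $p=\infty$ explicitly and correctly (the whole sequence already converges a.e.), whereas the paper's written argument with $g^p$ integrable is really tailored to finite $p$.
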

\begin{proof}
Let ${\lbrace f_n\rbrace}$ be a Cauchy sequence in $L^p(G^2, \mathbb{H}{\rm )}$. For each $n$, ${\lbrace f_n\rbrace}$ can be written as $f_{n}=f_{n}^1+if_{n}^2+jf_{n}^3+kf_{n}^4$ where the $f_{n}^e \in L^p(G^2, \mathbb{R}{\rm )}, e=1,2,3,4$. From Minkowski' s inequality, we get\\
${\left\|f+g\right\|}_p\leq {\left\|f\right\|}_p+{\left\|g\right\|}_p$, it follows that each of the sequences $f_{n}^e ,\quad e=1,2,3,4$ are Cauchy sequences in $L^p(G^2, \mathbb{R}{\rm )}$.\\
By similar technique, we can see that $(L^p(G^2, \mathbb{R}), {\left\|.\right\|}_p)$ is complete for general case(see theorem 3.11 in \cite{RudR}) then there exists function $f^e ,\quad e=1,2,3,4$ in $L^p(G^2, \mathbb{R})$ such that ${\mathop{\lim }_{n} {f_{n}^e=f^e, e=1,2,3,4}}$. Applying the Minkowski inequality again it follows that the function $f=f^1+if^2+jf^3+kf^4$ is in $L^p(G^2, \mathbb{H}{\rm )}$ and ${\mathop{\lim }_{n} {f_{n}=f}}$. Therefore, $L^p(G^2, \mathbb{H})$ is complete. Then quaternionic Riesz-Fischer theorem is hold.
Thus, $(L^p(G^2, \mathbb{H}),{\left\|.\right\|}_p)$, $1 \leq p \leq \infty$ is Banach space.\\
Now, let ${\lbrace f_{n}\rbrace}_{n \in \mathbb{N}}$ be a Cauchy sequences in $L^p(G^2, \mathbb{H})$, Choose indices $n_1, n_2,.....$ so that ${\left\|f_{n_{k+1}}-f_{n_{k}}\right\|}_p < \frac{1}{2^k}, \ k\in \mathbb{N}$ and put $g_{k}=f_{n_{k+1}}-f_{n_{k}}$. Let $g=\sum_{k=1}^{\infty}{\left|g_{k}\right|}$. By Minkowski inequality we have $\sum_{k=1}^{m}{\left\|g_k -f\right\|}_p$ is integrable with integrands bound above by ${(\sum_{k=1}^{m}{\left\|g_k -f\right\|}_{p})^p} \leq 1 $. It then follows from the Monotone Convergence Theorem that $g^p$ is integrable, hence $g \in L^p(G^2, \mathbb{H})$.   Define the following set $N = {\lbrace (x_1,x_2)\in G^2 : f(x_1,x_2)=\infty \rbrace} \subset G^2$, then from remark B 3.1 in \cite{Deitmar}, we get $g(x_1,x_2)\neq \infty$ outside a null set$N$, and the series $\sum_{k=1}^{m}{g_k}$ converges absolutely to some function $h(x_1,x_2)$ for every $(x_1,x_2)\notin N $. We trivially extend $h$ to all of $G^2$. Then $h$ is measurable as a pointwise limit of measurable functions, and since ${\left|h(x_1,x_2)\right|}\leqslant g(x_1,x_2)$ for every
$(x_1,x_2) \in G^2$ we have $h \in L^p(G^2, \mathbb{H})$. Put $f=f_{n_{1}}+h$. Then
\begin{align*}
&f(x_1,x_2)=f_{n_{1}}(x_1,x_2)+\lim_{m\longrightarrow\infty}{\sum_{k=1}^{m}{(f_{n_{k+1}}-f_{n_{k}})}}\\
&=\lim_{m\longrightarrow\infty}{\sum_{k=1}^{m}{f_{n_{m}}(x_1,x_2)}} for every (x_1,x_2)\notin N.
\end{align*}
Which is complete prove.
\end{proof}
It is not difficult to see that the space $L^2\left(G^2, \mathbb{H}\right)$ with the inner product in (\ref{RRR}) is the left $\mathbb{H}$-module Hilbert space.
\section{Fourier analysis on $\mathbf{G}^{2}$ over $ \mathbb{H}$}\label{se3}
\subsection{Right-Sided QFT on locally compact abelian groups}\label{su3.1}
The quaternion Fourier transform (QFT) is first defined by Ell to analyze linear time invariant systems of partial differential equations \cite{Ell1}. An excellent introduction to the history and developments of QFT was given by Brackx et al \cite{Brackx}. The noncommutatively of the quaternion multiplication leads to different types of quaternion Fourier transformations (see \cite{Hart}). In this section, we consider the right-sided quaternion Fourier transformation (RQFT) on locally compact abelian group.

The RQFT of $f\in L^2({\mathbb{R}}^2, \mathbb{H}{\rm )}$ is considered in \cite{Hart}. By a similar argument, we may define the RQFT of $f\in L^2\left(G^2, \mathbb{H}\right)$, which is a function from $\widehat{G^2}\ $ to $\mathbb{H}$ as follows:
\[{\mathcal{F}}_r\left(f\right)\left(\omega_i,\omega_j\right)=\ \hat{f}\left(\omega_i,\omega_j\right)=\int_{G^2}{f\left(x_1,x_2\right)\overline{\omega_i(x_1)}\ \overline{\omega_j{(x}_2)}}\ \ {d^2}_{{\mu }_{G^2}}\left(x_1,x_2\right).\]
We are now ready to invert the RQFT. If $f\in L^{1}\left(G^2, \mathbb{H}\right)$, then we define
\[{\mathcal{F}_r}^{-1}f\left(x_1,x_2\right)=\mathcal{F}_rf\left(-x_1,-x_2\right)=
\int_{\widehat{G^2}}{f\left(\omega_i,\omega_j\right)}\omega_j\left(x_2\right)\omega_i
\left(x_1\right)d\omega_id\omega_j.\]
We claim that if $f\in L^{1}\left(G^2, \mathbb{H}\right)$ and $\mathcal{F}_rf\in L^{1}\left(\widehat{G^2}, \mathbb{H}\right)$, then ${\mathcal{F}_r}^{-1}(\mathcal{F}_rf)=f$.

The following results are related to our study of the inversion theorem and Plancherel's theorem of RQFT. We use the integral representations to express the convolutions.
\begin{proposition}\label{pr3.2}
Let $f=f_1+f_2j\in L^1\left(G^2, \mathbb{H}\right)$. Put $\tilde{f}\left(x_1,x_2\right)=\overline{f\left(-x_1,-x_2\right)}$ and define $g\left(x_1,x_2\right)=\left(\tilde{f}*f\right)\left(x_1,x_2\right);$ then
\begin{align*}
 g\left(x_1,x_2\right)&=\int_{G^2}{\overline{f\left(y_1,y_2\right)}f\left(y_1+x_1,y_2+x_2\right)}\ {d^2}_{{\mu }_{G^2}}\left(y_1,y_2\right),\\
(f*{{\rm P}}^l)(x_1,x_2)&=[\int_{\widehat{G^2}}{[{\Phi }_1({\alpha }^{-l}_i(\omega_i)){\Phi }_2({\alpha }^{-l}_j(\omega_j)][\omega_i(x_1){\mathcal{F}}_r(f_1)(\omega_i,\omega_j)\omega_j\left(x_2\right)]}\\
&\quad +\overline{\omega_i\left(x_1\right)}{\mathcal{F}}_r\left(f_2j\right)\left(\omega_i,\omega_j\right)\omega_j\left(x_2\right))]]{d^2}_{{\mu }_{\widehat{G^2}}}\left(\omega_i,\omega_j\right)
\end{align*}
and
\[ Sc\left(\left(f*{{\rm P}}^l\right)\left(0,0\right)\right)=\int_{\widehat{G^2}}{{{\Phi }_1({\alpha }^{-l}_i\left(\omega_i\right)){\Phi }_2({\alpha }^{-l}_j\left(\omega_j\right))\left|{\mathcal{F}}_r\left(f\right)\left(\omega_i,\omega_j\right)\right|}^2{d^2}_{{\mu }_{\widehat{G^2}}}\left(\omega_i,\omega_j\right)},
\] where$\ {\Phi }_1, {\Phi }_2\ $and $P^l\ $are define in Definition \ref{de2.3}.
\end{proposition}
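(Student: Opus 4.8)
The plan is to establish the three displayed identities by direct computation; the only real work is the bookkeeping forced by the noncommutativity of $\mathbb{H}$, and throughout I would use repeatedly that $\mathbb{C}_i$ and $\mathbb{C}_j$ are each commutative, that $j\lambda=\bar\lambda j$ for every $\lambda\in\mathbb{C}_i$, that $\omega_i(-x_1)=\overline{\omega_i(x_1)}$ and $\omega_j(-x_2)=\overline{\omega_j(x_2)}$ (unit modulus), and the cyclicity relations $Sc(ab)=Sc(ba)$ and $Sc(\bar a)=Sc(a)$ recalled in Section \ref{se2}.

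For the first identity I would unwind the definition of the convolution on the (unimodular) abelian group $G^2$, namely $(\tilde f*f)(x_1,x_2)=\int_{G^2}\tilde f(z_1,z_2)\,f(x_1-z_1,x_2-z_2)\,{d^2}_{{\mu}_{G^2}}(z_1,z_2)$, substitute $\tilde f(z_1,z_2)=\overline{f(-z_1,-z_2)}$, and perform the measure-preserving change of variables $(z_1,z_2)\mapsto(-z_1,-z_2)$; this turns the integrand into $\overline{f(y_1,y_2)}\,f(y_1+x_1,y_2+x_2)$, which is the claim. Young's inequality also gives $\|g\|_1\le\|f\|_1^2<\infty$, so $g\in L^1(G^2,\mathbb{H})$ and the later expressions are meaningful.

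For the second identity I would write $P^l=P^l_iP^l_j$, insert the integral representations of $P^l_i$ and $P^l_j$ from Definition \ref{de2.3}, and apply Fubini's theorem to move the $d\omega_i\,d\omega_j$ integrations to the outside; this is justified since $f\in L^1$, $\Phi_1,\Phi_2\in L^1$ (hence $\Phi_k\circ\alpha^{-l}\in L^1$, as $\alpha_i,\alpha_j$ are topological automorphisms) and the characters have modulus one, so the triple integral converges absolutely with value $\|f\|_1\|\Phi_1\|_1\|\Phi_2\|_1$. After pulling out the real scalars $\Phi_1(\alpha^{-l}_i\omega_i)\Phi_2(\alpha^{-l}_j\omega_j)$, the remaining inner integral is $\int_{G^2}f(z_1,z_2)\,\omega_i(x_1-z_1)\,\omega_j(x_2-z_2)\,{d^2}_{{\mu}_{G^2}}(z_1,z_2)$; I would rewrite $\omega_i(x_1-z_1)=\omega_i(x_1)\overline{\omega_i(z_1)}$, $\omega_j(x_2-z_2)=\overline{\omega_j(z_2)}\,\omega_j(x_2)$, and then use the relation $f\lambda=(f_1+f_2j)\lambda=\lambda f_1+\bar\lambda f_2j$ with $\lambda=\omega_i(x_1)\in\mathbb{C}_i$ to carry the factor $\omega_i(x_1)$ to the far left. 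This splits the inner integral as $\omega_i(x_1)\bigl[\int_{G^2}f_1(z)\overline{\omega_i(z_1)}\,\overline{\omega_j(z_2)}\,{d^2}(z)\bigr]\omega_j(x_2)+\overline{\omega_i(x_1)}\bigl[\int_{G^2}f_2(z)\,j\,\overline{\omega_i(z_1)}\,\overline{\omega_j(z_2)}\,{d^2}(z)\bigr]\omega_j(x_2)$, that is $\omega_i(x_1)\mathcal{F}_r(f_1)(\omega_i,\omega_j)\omega_j(x_2)+\overline{\omega_i(x_1)}\mathcal{F}_r(f_2j)(\omega_i,\omega_j)\omega_j(x_2)$; reassembling yields the stated formula.

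For the third identity I would set $(x_1,x_2)=(0,0)$ in the second one: every character becomes $1$, so $(f*P^l)(0,0)=\int_{\widehat{G^2}}\Phi_1(\alpha^{-l}_i\omega_i)\Phi_2(\alpha^{-l}_j\omega_j)\bigl(\mathcal{F}_r(f_1)(\omega)+\mathcal{F}_r(f_2j)(\omega)\bigr)\,{d^2}_{{\mu}_{\widehat{G^2}}}(\omega)=\int_{\widehat{G^2}}\Phi_1(\alpha^{-l}_i\omega_i)\Phi_2(\alpha^{-l}_j\omega_j)\,\mathcal{F}_r(f)(\omega)\,{d^2}_{{\mu}_{\widehat{G^2}}}(\omega)$, and then apply $Sc$, which commutes with the real-weighted integral. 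Since the function to which this should be applied is the autocorrelation $g=\tilde f*f$ (the second identity applies to it as well via Remark \ref{re2.1}(ii), and is compatible with it because $g*P^l=\tilde f*(f*P^l)$, the underlying translations on the abelian $G^2$ commuting), the residual task is to identify $Sc\bigl(\mathcal{F}_r(g)(\omega)\bigr)$ with $|\mathcal{F}_r(f)(\omega)|^2$. For this I would expand, using the first identity and the change of variables $u=y+x$, $\mathcal{F}_r(g)(\omega)=\int_{G^2}\int_{G^2}\overline{f(y)}\,f(u)\,\omega_i(y_1)\overline{\omega_i(u_1)}\,\omega_j(y_2)\overline{\omega_j(u_2)}\,{d^2}(y)\,{d^2}(u)$, expand $|\mathcal{F}_r(f)(\omega)|^2=\mathcal{F}_r(f)(\omega)\overline{\mathcal{F}_r(f)(\omega)}$ as a double integral, take $Sc$ inside both, and match the two integrands using $Sc(ab)=Sc(ba)$, $Sc(\bar a)=Sc(a)$, the commutativity inside $\mathbb{C}_i$ and inside $\mathbb{C}_j$, and the relabelling $y\leftrightarrow u$ to dispose of the leftover commutator term. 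I expect this last reordering under the scalar part to be the main obstacle: for the right-sided transform $\mathcal{F}_r(\tilde f*f)$ is genuinely not $|\mathcal{F}_r(f)|^2$ as a quaternion, only their scalar parts should match, so one must track exactly which factors lie in $\mathbb{C}_i$ and which in $\mathbb{C}_j$ and invoke $Sc$ (rather than the full product) at precisely the right moment; making the antisymmetry of the $y\leftrightarrow u$ swap kill the obstruction is the crux.
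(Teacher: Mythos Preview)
Your plan for the first two identities matches the paper exactly: the same change of variables for $g=\tilde f*f$, and the same expansion of $P^l=P^l_iP^l_j$ combined with Fubini and the rule $j\lambda=\bar\lambda j$ to split $f=f_1+f_2j$. Your integrability justification is cleaner than the paper's, which simply writes the manipulations down.

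For the third identity your reduction is correct (and you rightly note the statement is really about $g*P^l$, as the paper's own computation and the later use in Theorem~\ref{th3.9} confirm), but your route differs from the paper's in one practical respect. You propose to pass through $\mathcal{F}_r(g)$: evaluate the second identity at $(0,0)$ for $g$, obtaining $\int_{\widehat{G^2}}\Phi_1\Phi_2\,\mathcal{F}_r(g)\,d\omega$, and then argue that $Sc(\mathcal{F}_r(g)(\omega))=|\mathcal{F}_r(f)(\omega)|^2$; this forces the character factors into the RQFT ordering $\overline{\omega_i}\,\overline{\omega_j}$ and leaves you with the commutator obstruction you flag, to be killed by the combination of $Sc(ab)=Sc(ba)$, $Sc(\bar q)=Sc(q)$, and the swap $y\leftrightarrow u$. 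The paper instead never writes $\mathcal{F}_r(g)$: it expands $(g*P^l)(0,0)$ directly, inserting the kernel in the product form $P^l_i(s_1-z_1)P^l_j(s_2-z_2)$, which places the characters in the order $\omega_i(s_1)\overline{\omega_i(z_1)}\,\overline{\omega_j(z_2)}\omega_j(s_2)$ with the $z$--factors already adjacent in the middle. A single cyclic shift under $Sc$ then groups the $s$--factors as $\omega_j(s_2)\omega_i(s_1)\overline{f(s)}$ and the $z$--factors as $f(z)\overline{\omega_i(z_1)}\,\overline{\omega_j(z_2)}$, which integrate separately to $\overline{\mathcal{F}_r f(\omega)}$ and $\mathcal{F}_r f(\omega)$. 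So the paper's choice to keep the $P^l_iP^l_j$ factorisation intact (rather than collapsing it into an RQFT of $g$) is what makes the bookkeeping shorter; your route reaches the same destination but has to undo and redo that ordering, which is precisely the ``crux'' you anticipate.
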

\begin{proof}
We compute
\begin{align*}
&P^l\left(x_1-y_1,x_2-y_2\right)\\
&=P^l_i\left(x_1-y_1\right)P^l_j\left(x_2-y_2\right)\\
&=\int_{{\hat{G}}_{{\mathbb{C}}_i}}{{\Phi }_1\left({\alpha }^{-l}_i\left(\omega_i\right)\right)}\omega_i(x_1)\overline{\omega_i(y_1)}d\omega_i\int_{{\hat{G}}_{{\mathbb{C}}_j}}{{\Phi }_2({\alpha }^{-l}_j\left(\omega_j\right))}\omega_j(x_2)\overline{\omega_j(y_2)}d\omega_j\\
&=\int_{{\hat{G}}_{{\mathbb{C}}_i}}{\int_{{\hat{G}}_{{\mathbb{C}}_j}}{{\Phi }_1({\alpha }^{-l}_i\left(\omega_i\right)){\Phi }_2({\alpha }^{-l}_j\left(\omega_j\right))\omega_i(x_1)\overline{\omega_i(y_1)}\ \overline{\omega_j(y_2)}\omega_j(x_2)d\omega_id\omega_j}}\\
&=\int_{\widehat{G^2}}{{\Phi }_1({\alpha }^{-l}_i\left(\omega_i\right)){\Phi }_2({\alpha }^{-l}_j\left(\omega_j\right))\omega_i(x_1)\overline{\omega_i(y_1)}\ \overline{\omega_j(y_2)}
\omega_j(x_2)d^2(\omega_i,\omega_j}).
\end{align*}
Now, since
\begin{align*}
&\left(f*{{\rm P}}^l\right)\left(x_1,x_2\right)\\
&=\int_{G^2}{f\left(y_1,y_2\right)P^l\left(x_1-y_1,x_2-y_2\right)dy_1dy_2}\\
&=\int_{G^2}\int_{\widehat{G^2}}f\left(y_1,y_2\right)\ {\Phi }_1({\alpha }^{-l}_i\left(\omega_i\right)){\Phi }_2({\alpha }^{-l}_j\left(\omega_j\right))\omega_i(x_1)\overline{\omega_i\left(y_1\right)}\\
&\quad \overline{\omega_j\left(y_2\right)}\omega_j(x_2){d^2}_{{\mu }_{\widehat{G^2}}}(\omega_i,\omega_j){d^2}_{{\mu }_{G^2}}\left(y_1,y_2\right)\\
&=\int_{G^2}\int_{\widehat{G^2}}{\Phi }_1({\alpha }^{-l}_i\left(\omega_i\right)){\Phi }_2({\alpha }^{-l}_j\left(\omega_j\right))(f_1\left(y_1,y_2\right)+f_2\left(y_1,y_2\right)j)\ \omega_i(x_1)\overline{\omega_i\left(y_1\right)}\\
&\quad \overline{\omega_j\left(y_2\right)}\omega_j(x_2)
{d^2}_{{\mu }_{\widehat{G^2}}}\left(\omega_i,\omega_j\right){d^2}_{{\mu }_{G^2}}\left(y_1,y_2\right)\\
&=\int_{\widehat{G^2}}\int_{G^2}\ {\Phi }_1({\alpha }^{-l}_i\left(\omega_i\right)){\Phi }_2({\alpha }^{-l}_j\left(\omega_j\right))\omega_i\left(x_1\right)f_1\left(y_1,y_2\right)\overline{\omega_i\left(y_1\right)}
\end{align*}
\begin{align*}
&\overline{\omega_j\left(y_2\right)}\omega_j\left(x_2\right){d^2}_{{\mu }_{G^2}}\left(y_1,y_2\right){d^2}_{{\mu }_{\widehat{G^2}}}\left(\omega_i,\omega_j\right)\\
&+\int_{\widehat{G^2}}\int_{G^2}\ {\Phi }_1({\alpha }^{-l}_i\left(\omega_i\right)){\Phi }_2({\alpha }^{-l}_j\left(\omega_j\right))\overline{\omega_i\left(x_1\right)}f_2
\left(y_1,y_2\right)j\overline{\omega_i\left(y_1\right)}\ \overline{\omega_j\left(y_2\right)}\\
&\omega_j\left(x_2\right) {d^2}_{{\mu }_{G^2}}\left(y_1,y_2\right){d^2}_{{\mu }_{\widehat{G^2}}}\left(\omega_i,\omega_j\right)\\
&=\int_{\widehat{G^2}}({{\Phi }_1({\alpha }^{-l}_i\left(\omega_i\right)){\Phi }_2({\alpha }^{-l}_j\left(\omega_j\right))
((\omega_i\left(x_1\right){\mathcal{F}}_r\left(f_1\right)\left(\omega_i,\omega_j\right)\omega_j\left(x_2\right)}\\
&+\overline{\omega_i\left(x_1\right)}{\mathcal{F}}_r\left(f_2j\right)\left(\omega_i,\omega_j\right)\omega_j\left(x_2\right)))){d^2}_{{\mu }_{\widehat{G^2}}}\left(\omega_i,\omega_j\right).
\end{align*}
Note that
\begin{align*}
g\left(x_1,x_2\right)&=\left(\tilde{f}*f\right)\left(x_1,x_2\right)\\
&=\int_{G^2}{\overline{f\left(-y_1,-y_2\right)}f\left(x_1-y_1,x_2-y_2\right)}\ {d^2}_{{\mu }_{G^2}}\left(y_1,y_2\right)\\
&=\int_{G^2}{\overline{f\left(y_1,y_2\right)}f\left(x_1+y_1,x_2+y_2\right)}\ {d^2}_{{\mu }_{G^2}}\left(y_1,y_2\right).
\end{align*}
Then
\begin{align*}
& Sc\left(\left(f*{{\rm P}}^l\right)\left(0,0\right)\right)=Sc(\int_{G^2}{g\left(y_1,y_2\right){{\rm P}}^l\left({-y}_1,-y_2\right){d^2}_{{\mu }_{G^2}}\left(y_1,y_2\right)})\\
&=Sc(\int_{G^2}{[\int_{G^2}{\overline{f\left(s_1,s_2\right)}}f\left(s_1+y_1,s_2+y_2\right){d^2}_{{\mu }_{G^2}}\left(s_1,s_2\right)]{{\rm P}}^l\left({-y}_1,-y_2\right){d^2}_{{\mu }_{G^2}}\left(y_1,y_2\right)})\\
&=Sc(\int_{G^2}{\int_{G^2}{\overline{f\left(s_1,s_2\right)}}f\left(s_1+y_1,s_2+y_2\right){{\rm P}}^l\left({-y}_1,-y_2\right){d^2}_{{\mu }_{G^2}}\left(s_1,s_2\right){d^2}_{{\mu }_{G^2}}\left(y_1,y_2\right)})\\
&=Sc(\int_{G^2}{\int_{G^2}{\overline{f\left(s_1,s_2\right)}}f\left(z_1,z_2\right){{\rm P}}^l\left(s_1{-z}_1,{s_2-z}_2\right){d^2}_{{\mu }_{G^2}}\left(s_1,s_2\right){d^2}_{{\mu }_{G^2}}\left(z_1,z_2\right)})\\
&=Sc(\int_{G^4}\int_{\widehat{G^2}}\overline{f\left(s_1,s_2\right)}f\left(z_1,z_2\right){d^2}_{{\mu }_{G^2}}\left(z_1,z_2\right){\Phi }_1({\alpha }^{-l}_i\left(\omega_i\right)){\Phi }_2({\alpha }^{-l}_j\left(\omega_j\right))\overline{\omega_i\left(z_1\right)}\ \overline{\omega_j\left(z_2\right)}\\
&\quad \omega_j\left(s_2\right)\omega_i\left(s_1\right){d^2}_{{\mu }_{G^2}}\left(s_1,s_2\right){d^2}_{{\mu }_{\widehat{G^2}}}\left(\omega_i,\omega_j\right){d^2}_{{\mu }_{G^2}}\left(z_1,z_2\right))\\
&=Sc(\int_{G^4}\int_{\widehat{G^2}}{\Phi }_1({\alpha }^{-l}_i\left(\omega_i\right)){\Phi }_2({\alpha }^{-l}_j\left(\omega_j\right)){d^2}_{{\mu }_{\widehat{G^2}}}\left(\omega_i,\omega_j\right)\\
&\quad \omega_j\left(s_2\right)\omega_i\left(s_1\right)\overline{f\left(s_1,s_2\right)}f\left(z_1,z_2\right)
\overline{\omega_i\left(z_1\right)}\ \overline{\omega_j\left(z_2\right)}{d^2}_{{\mu }_{G^2}}\left(s_1,s_2\right){d^2}_{{\mu }_{G^2}}\left(z_1,z_2\right))\\
&=\int_{\widehat{G^2}}{{\left|{\mathcal{F}}_r\left(f\right)\left(\omega_i,\omega_j\right)\right|}^2{d^2}_{{\mu }_{\widehat{G^2}}}\left(\omega_i,\omega_j\right)},
\end{align*}
hence
\begin{align*}
Sc\left(\left(f*{{\rm P}}^l\right)\left(0,0\right)\right)=\int_{\widehat{G^2}}{{{\Phi }_1({\alpha }^{-l}_i\left(\omega_i\right)){\Phi }_2({\alpha }^{-l}_j\left(\omega_j\right))\left|{\mathcal{F}}_r\left(f\right)\left(\omega_i,\omega_j\right)\right|}^2{d^2}_{{\mu }_{\widehat{G^2}}}\left(\omega_i,\omega_j\right)},
\end{align*}
which completes the proof.
\end{proof}
 Using Lemmas \ref{le2.5} and \ref{le2.7}, Theorem \ref{th2.6}, and Proposition \ref{pr3.2}, we give an inversion theorem of RQFT as follows.
\begin{theorem}[ Inversion of RQFT ]\label{th3.3}
If $f\in L^1\left(G^2, \mathbb{H}\right)$, ${\mathcal{F}}_rf\in L^1\left(\widehat{G^2}, \mathbb{H}\right)$ and
\[g\left(x_1,x_2\right)=\ \int_{\widehat{G^2}}{{\mathcal{F}}_rf\left(\omega_i,\omega_j\right)\omega_j{(x}_2)\omega_i(x_1)}{d^2}_{{\mu }_{\widehat{G^2}}}\left(\omega_i,\omega_j\right),\]
then $f\left(x_1,x_2\right)=g(x_1,x_2)$ for almost every $(x_1,x_2)\in G^2$.
\end{theorem}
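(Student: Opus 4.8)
The plan is to realize $g$ as the pointwise limit of the smoothed functions $f*P^{l}$ built from the approximate identity $P^{l}$ of Definition~\ref{de2.3}, and then to identify that limit with $f$ by combining Theorem~\ref{th2.6} with the quaternionic Riesz--Fischer statement of Lemma~\ref{le2.7}. First I would apply Theorem~\ref{th2.6}(i) with $p=1$: since $\Phi=(\Phi_{1},\Phi_{2})$ is increasing to $1$, it gives $\|f*P^{l}-f\|_{1}\to 0$ as $l\to\infty$, so $\{f*P^{l}\}$ is a Cauchy sequence in $L^{1}(G^{2},\mathbb{H})$ converging to $f$. By Lemma~\ref{le2.7} there is then a subsequence $(l_{k})$ along which
\[
(f*P^{l_{k}})(x_{1},x_{2})\longrightarrow f(x_{1},x_{2})\qquad\text{for almost every }(x_{1},x_{2})\in G^{2}.
\]

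Next I would use the explicit formula of Proposition~\ref{pr3.2}. Decomposing $f=f_{1}+f_{2}j$ with $f_{1},f_{2}\in L^{1}(G^{2},\mathbb{C})$ (Remark~\ref{re2.1}(ii)), that proposition yields
\begin{align*}
(f*P^{l})(x_{1},x_{2})&=\int_{\widehat{G^{2}}}\Phi_{1}(\alpha^{-l}_{i}(\omega_{i}))\Phi_{2}(\alpha^{-l}_{j}(\omega_{j}))\\
&\quad\times\big[\omega_{i}(x_{1})\mathcal{F}_{r}(f_{1})(\omega_{i},\omega_{j})\omega_{j}(x_{2})+\overline{\omega_{i}(x_{1})}\mathcal{F}_{r}(f_{2}j)(\omega_{i},\omega_{j})\omega_{j}(x_{2})\big]\,{d^{2}}_{{\mu }_{\widehat{G^{2}}}}(\omega_{i},\omega_{j}).
\end{align*}
Now I would pass to the limit $l\to\infty$ inside the integral. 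Since $\Phi_{1},\Phi_{2}$ increase to $1$ we have $|\Phi_{1}|,|\Phi_{2}|\le 1$ and $\Phi_{1}(\alpha^{-l}_{i}(\omega_{i}))\to 1$, $\Phi_{2}(\alpha^{-l}_{j}(\omega_{j}))\to 1$ pointwise on $\widehat{G^{2}}$; because $|\omega_{i}(x_{1})|=|\omega_{j}(x_{2})|=1$, the integrand is dominated in modulus, uniformly in $l$, by $|\mathcal{F}_{r}(f_{1})(\omega_{i},\omega_{j})|+|\mathcal{F}_{r}(f_{2}j)(\omega_{i},\omega_{j})|$, which lies in $L^{1}(\widehat{G^{2}})$ by the hypothesis $\mathcal{F}_{r}f\in L^{1}(\widehat{G^{2}},\mathbb{H})$. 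Reducing to the four real components as in Remark~\ref{re2.1}(i) and applying the scalar dominated convergence theorem, I would obtain, for every $(x_{1},x_{2})\in G^{2}$,
\begin{align*}
\lim_{l\to\infty}(f*P^{l})(x_{1},x_{2})&=\int_{\widehat{G^{2}}}\big[\omega_{i}(x_{1})\mathcal{F}_{r}(f_{1})(\omega_{i},\omega_{j})\omega_{j}(x_{2})\\
&\qquad+\overline{\omega_{i}(x_{1})}\mathcal{F}_{r}(f_{2}j)(\omega_{i},\omega_{j})\omega_{j}(x_{2})\big]\,{d^{2}}_{{\mu }_{\widehat{G^{2}}}}(\omega_{i},\omega_{j}).
\end{align*}

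It then remains to recognize the last integral as $g(x_{1},x_{2})$. Using $\mathcal{F}_{r}f=\mathcal{F}_{r}(f_{1})+\mathcal{F}_{r}(f_{2}j)$ together with the commutation rules in $\mathbb{H}$ -- namely $zj=j\bar z$ for $z\in\mathbb{C}_{i}$, and the separate commutativity of $\mathbb{C}_{i}$ and of $\mathbb{C}_{j}$ -- one rearranges the quaternionic factors so that the integrand becomes $\mathcal{F}_{r}f(\omega_{i},\omega_{j})\,\omega_{j}(x_{2})\,\omega_{i}(x_{1})$, i.e.\ the last integral equals $\int_{\widehat{G^{2}}}\mathcal{F}_{r}f(\omega_{i},\omega_{j})\,\omega_{j}(x_{2})\,\omega_{i}(x_{1})\,{d^{2}}_{{\mu }_{\widehat{G^{2}}}}(\omega_{i},\omega_{j})=g(x_{1},x_{2})$. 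Evaluating $\lim_{l\to\infty}(f*P^{l})(x_{1},x_{2})$ in the two ways now available -- equal to $f(x_{1},x_{2})$ a.e.\ along $(l_{k})$ by the first paragraph, and equal to $g(x_{1},x_{2})$ everywhere by the second and third -- gives $f=g$ almost everywhere, as desired.

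The step I expect to be the main obstacle is this final identification, hand in hand with the justification of moving the limit inside the integral. The latter requires a dominated convergence argument for $\mathbb{H}$-valued integrands (or a reduction to the four real parts), genuinely uses $\mathcal{F}_{r}f\in L^{1}$ to produce the majorant, and also needs the bound $|\Phi_{k}|\le 1$, which has to be extracted from the monotonicity hidden in ``increasing to $1$''; one should also check en route that $\mathcal{F}_{r}f_{1}$ and $\mathcal{F}_{r}(f_{2}j)$ are separately integrable on $\widehat{G^{2}}$, which follows from the hypothesis together with a measure-preserving change of the character variable. The genuinely delicate part, however, is the algebraic bookkeeping: since $\mathbb{C}_{i}$ and $\mathbb{C}_{j}$ do not commute inside $\mathbb{H}$, the right-sided transform fixes the order in which the characters $\omega_{i}(x_{1})$ and $\omega_{j}(x_{2})$ occur, so one must carry the splitting $f=f_{1}+f_{2}j$ through the entire computation and use $j\bar z=zj$ repeatedly to bring the limiting integral back to the compact form that defines $g$. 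This is precisely where the non-commutativity of the quaternions makes the inversion argument different from its classical scalar counterpart.
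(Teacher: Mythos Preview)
Your proposal is correct and follows essentially the same route as the paper: express $f*P^{l}$ via Proposition~\ref{pr3.2}, pass to the limit inside the integral by dominated convergence using $\mathcal{F}_{r}f\in L^{1}$, and match this pointwise limit with the $L^{1}$-limit $f$ coming from Theorem~\ref{th2.6} together with the a.e.\ subsequence of Lemma~\ref{le2.7}. The only difference is cosmetic: the paper asserts the compact form $(f*P^{l})(x_{1},x_{2})=\int_{\widehat{G^{2}}}\Phi_{1}\Phi_{2}\,\mathcal{F}_{r}f\,\omega_{j}(x_{2})\omega_{i}(x_{1})$ directly as ``from Proposition~\ref{pr3.2}'' and then reads off $g$ immediately, whereas you keep the $f_{1}+f_{2}j$ decomposition that Proposition~\ref{pr3.2} actually yields and perform the quaternionic rearrangement $zj=j\bar z$ explicitly at the end---so your write-up is in fact more transparent about the one genuinely non-commutative step.
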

\begin{proof}
Form Proposition \ref{pr3.2},

\begin{align}\label{eq3.1}
&\left(f*{{\rm P}}^l\right)\left(x_1,x_2\right)=\int_{\widehat{G^2}}{\Phi }_1({\alpha }^{-l}_i\left(\omega_i\right))\cr
&{\Phi}_2({\alpha }^{-l}_j\left(\omega_j\right)){\mathcal{F}}_r\left(f\right)\left(\omega_i,\omega_j\right)
\omega_j{(x}_2)\omega_i(x_1){d^2}_{{\mu }_{\widehat{G^2}}}\left(\omega_i,\omega_j\right)
\end{align}
The integrands on the right side of \eqref{eq3.1} are bounded by $\left|{\mathcal{F}}_r\left(f\right)\left(\omega_i,\omega_j\right)\right|$, for large enough $l$. Hence, the right side of \eqref{eq3.1} converges to $g(x_1,x_2)$, for every $(x_1,x_2)\in G^2$, by the dominated convergence theorem as  ${l\to \infty }$. so by Theorem \ref{th2.6}, we get ${\mathop{\lim }_{n\to \infty } {\left\|f*P^n{\rm \ }{\rm -}f\right\|}_p=0\ }$. Thus by Lemmas (\ref{le2.5}, \ref{le2.7}), we see that ${f*P^n}$ has a pointwise convergent subsequence ${f*P^{l_n}}$ converging to $f$  almost every where. Hence $f\left(x_1,x_2\right)=g\left(x_1,x_2\right),\ $for almost every $(x_1,x_2)\in G^2$.
\end{proof}
\begin{corollary}[Uniqueness of RQFT]\label{co3.4}
If $f, g\in L^1\left(G^2, \mathbb{H}\right)$ and
\[{\mathcal{F}}_r\left(f\right)\left(\omega_i,\omega_j\right)=
{\mathcal{F}}_r\left(g\right)\left(\omega_i,\omega_j\right)\]
 for almost every $\left(\omega_i,\omega_j\right)\in \widehat{G^2}$, then
$f\left(x_1,x_2\right)=g\left(x_1,x_2\right)$, for almost every $\left(x_1,x_2\right)\in G^2$.
\end{corollary}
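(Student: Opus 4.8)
The plan is to deduce the uniqueness statement directly from the inversion theorem (Theorem \ref{th3.3}) by a linearity argument, using that ${\mathcal{F}}_r$ is additive on $L^1\left(G^2, \mathbb{H}\right)$. First I would set $h = f - g$; since $L^1\left(G^2, \mathbb{H}\right)$ is a module (in particular a real vector space) we have $h\in L^1\left(G^2, \mathbb{H}\right)$, and by linearity of the integral defining ${\mathcal{F}}_r$ we get ${\mathcal{F}}_r(h)\left(\omega_i,\omega_j\right) = {\mathcal{F}}_r(f)\left(\omega_i,\omega_j\right) - {\mathcal{F}}_r(g)\left(\omega_i,\omega_j\right) = 0$ for almost every $\left(\omega_i,\omega_j\right)\in\widehat{G^2}$. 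Thus it suffices to show: if $h\in L^1\left(G^2, \mathbb{H}\right)$ and ${\mathcal{F}}_r(h)=0$ a.e., then $h=0$ a.e.

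Next I would verify the hypotheses of Theorem \ref{th3.3} for $h$: we need $h\in L^1\left(G^2, \mathbb{H}\right)$ (given) and ${\mathcal{F}}_r h\in L^1\left(\widehat{G^2}, \mathbb{H}\right)$. The latter is immediate because ${\mathcal{F}}_r h = 0$ almost everywhere, so $\|{\mathcal{F}}_r h\|_1 = 0 < \infty$. Then the function
\[
g_h\left(x_1,x_2\right)=\int_{\widehat{G^2}}{\mathcal{F}}_r h\left(\omega_i,\omega_j\right)\omega_j(x_2)\omega_i(x_1)\,{d^2}_{{\mu }_{\widehat{G^2}}}\left(\omega_i,\omega_j\right)
\]
is identically zero, since its integrand vanishes almost everywhere on $\widehat{G^2}$. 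Theorem \ref{th3.3} then gives $h\left(x_1,x_2\right) = g_h\left(x_1,x_2\right) = 0$ for almost every $\left(x_1,x_2\right)\in G^2$, hence $f = g$ a.e.

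The only genuinely delicate point — and the step I expect to be the main obstacle — is the additivity ${\mathcal{F}}_r(f-g) = {\mathcal{F}}_r(f) - {\mathcal{F}}_r(g)$ together with the interplay between the two "almost everywhere" qualifiers (one on $\widehat{G^2}$, one on $G^2$). Additivity itself is routine from linearity of the integrand $\left(x_1,x_2\right)\mapsto h(x_1,x_2)\overline{\omega_i(x_1)}\,\overline{\omega_j(x_2)}$ in $h$ and linearity of the Haar integral, with no noncommutativity issue arising because the characters multiply $h$ on the \emph{right}. The measure-theoretic bookkeeping — that an a.e.-zero transform forces the integral representation to be the zero function, and that Theorem \ref{th3.3} is applicable under this weakened integrability hypothesis — requires only that a function equal to $0$ a.e.\ has vanishing integral against any fixed bounded kernel, which is standard. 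I would state the proof in two short lines (reduce to the kernel of ${\mathcal{F}}_r$, then invoke Theorem \ref{th3.3}) without belaboring these points.
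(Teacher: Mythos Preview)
Your proposal is correct and matches the paper's approach: the paper states Corollary \ref{co3.4} immediately after Theorem \ref{th3.3} without any separate proof, treating it as a direct consequence of the inversion theorem via exactly the linearity/reduction-to-kernel argument you outline. There is nothing to add.
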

Now we see that, under suitable conditions, by the inverse right-sided quaternion Fourier transform (IRQFT), the original signal $f $ can be reconstructed from ${\mathcal{F}}_rf$.
\begin{definition}[IRQFT]\label{de3.5}
For every $f\in L^1\left(\widehat{G^2}, \mathbb{H}\right)$, the inverse right-sided quaternion Fourier transform of $f$ is defined by
\[\left({{\mathcal{F}}_r}^{-1}f\right)\left(x_1,x_2\right)=\int_{\widehat{G^2}}{f\left(\omega_i,\omega_j\right)\omega_j{(x}_2)\omega_i{(x}_1)}{d^2}_{{\mu }_{\widehat{G^2}}}\left(\omega_i,\omega_j\right).\]
\end{definition}
\begin{remark}\label{re3.6}
Simply, we can see that the transform ${\mathcal{F}}_r$ is bounded linear transformation from $L^1\left(G^2, \mathbb{H}\right)$ into $L^{\infty }\left(\widehat{G^2}, \mathbb{H}\right)$ and the transform ${{\mathcal{F}}_r}^{-1}$ is bounded linear transformation from $L^1\left(\widehat{G^2}, \mathbb{H}\right)$ and into $L^{\infty }\left(G^2, \mathbb{H}\right)$.
\end{remark}
We show later, ${{\mathcal{F}}_r|}_{L^1\left(G^2, \mathbb{H}\right)\cap L^2\left(G^2, \mathbb{H}\right)}$ (respectively, ${{{\mathcal{F}}_r}^{-1}|}_{L^1\left(\widehat{G^2}, \mathbb{H}\right)\cap L^2\left(\widehat{G^2}, \mathbb{H}\right)})$ can be extended to $L^2\left(G^2, \mathbb{H}\right)$ (respectively, $L^2\left(\widehat{G^2}, \mathbb{H}\right))$, and as an operator on $L^2\left(\widehat{G^2}, \mathbb{H}\right)$, ${{\mathcal{F}}_r}^{-1}$ is the inversion of ${\mathcal{F}}_r$.

An important result, the so-called  multiplication formula in classical Fourier analysis, can be generalized to RQFT. Before stating the formula, we introduce an auxiliary transform of $f\left(x_1,x_2\right)\ =\ f_0\left(x_1,x_2\right)+\ if_1\left(x_1,x_2\right)+\ jf_2\left(x_1,x_2\right)+\ kf_3\left(x_1,x_2\right)$, which is defined by
\[\beta f\left(x_1,x_2\right):=\ f_0(x_1,x_2)+ \ if_1(x_1,-x_2)+jf_2(-x_1,x_2)+kf_3(-x_1,-x_2).\]
 Then we obtain the following result.
\begin{theorem}[Modified Multiplication Formula]\label{th3.7}
Suppose that $f\in L^1\left(G^2, \mathbb{H}\right),\\\ g\in L^1\left(\widehat{G^2}, \mathbb{H}\right)$, $h:=\beta g,\ F_r:={\mathcal{F}}_rf$, and $H_r\left(x_1,x_2\right):={{\mathcal{F}}_r}^{-1}\ h\left(-x_1,-x_2\right),\ $for ($x_1,x_2)\in G^2;\ $ then
\begin{equation} \label{eq3.2}
\int_{\widehat{G^2}}{F_r\left(\omega_i,\omega_j\right)g(\omega_i,\omega_j)}{d^2}_{{\mu }_{\widehat{G^2}}}\left(\omega_i,\omega_j\right)=\int_{G^2}{f(x_1,x_2)H_r\left(x_1,x_2\right)}{d^2}_{{\mu }_{G^2}}\left(x_1,x_2\right)\ \ \
\end{equation}
Moreover, if $g$ is in $L^2\left(\widehat{G^2}, \mathbb{H}\right)$, then ${\left\|g\right\|}_2={\left\|h\right\|}_2$.
\end{theorem}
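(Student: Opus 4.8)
The plan is to derive the multiplication formula \eqref{eq3.2} directly from the definitions, using Fubini's theorem together with a careful bookkeeping of the quaternionic factors, and then to obtain $\|g\|_2=\|h\|_2$ from the structure of $\beta$. First I would substitute $F_r(\omega_i,\omega_j)=\int_{G^2}f(x_1,x_2)\overline{\omega_i(x_1)}\,\overline{\omega_j(x_2)}\,d^2_{\mu_{G^2}}(x_1,x_2)$ into the left-hand side of \eqref{eq3.2}. Because $|\omega_i(x_1)|=|\omega_j(x_2)|=1$ and $|pq|=|p||q|$, the integrand $f(x_1,x_2)\overline{\omega_i(x_1)}\,\overline{\omega_j(x_2)}\,g(\omega_i,\omega_j)$ has absolute value $|f(x_1,x_2)|\,|g(\omega_i,\omega_j)|$, which is integrable on $G^2\times\widehat{G^2}$ with integral $\|f\|_1\|g\|_1<\infty$; hence Fubini's theorem (applied to the four real components, cf.\ Remark \ref{re2.1}) lets me interchange the order of integration. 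Pulling the fixed quaternion $f(x_1,x_2)$ to the left of the inner integral, the left-hand side of \eqref{eq3.2} becomes $\int_{G^2}f(x_1,x_2)\,A(x_1,x_2)\,d^2_{\mu_{G^2}}(x_1,x_2)$, where
\[
A(x_1,x_2):=\int_{\widehat{G^2}}\overline{\omega_i(x_1)}\,\overline{\omega_j(x_2)}\,g(\omega_i,\omega_j)\,d^2_{\mu_{\widehat{G^2}}}(\omega_i,\omega_j).
\]
Since the right-hand side of \eqref{eq3.2} equals $\int_{G^2}f(x_1,x_2)H_r(x_1,x_2)\,d^2_{\mu_{G^2}}(x_1,x_2)$, it now suffices to prove $A(x_1,x_2)=H_r(x_1,x_2)$ for almost every $(x_1,x_2)\in G^2$.

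The proof of this identity is the heart of the matter. I would write $g=g_0+g_1i+g_2j+g_3k$ with $g_0,\dots,g_3$ real-valued (Remark \ref{re2.1}) and treat the four terms separately. For each term the factor $\overline{\omega_i(x_1)}$ lies in $\mathbb{C}_i$ and $\overline{\omega_j(x_2)}$ in $\mathbb{C}_j$, so the elementary commutation rules for $\mathbb{C}_\mu$-elements against $1,i,j,k$ allow me to transport the two character factors from the left of the relevant unit to the right, at the cost of replacing $\omega_i(x_1)$ and/or $\omega_j(x_2)$ by their conjugates in some positions. Now the reflections $\omega_i\mapsto-\omega_i$ and $\omega_j\mapsto-\omega_j$ on the LCA group $\widehat{G^2}$ preserve the Haar measure $d^2_{\mu_{\widehat{G^2}}}$ and, since $|\omega_i(x_1)|=|\omega_j(x_2)|=1$, send $\overline{\omega_i(x_1)}$ to $\omega_i(x_1)$ and $\overline{\omega_j(x_2)}$ to $\omega_j(x_2)$; thus each unwanted conjugation can be absorbed by a change of variables, and the induced change of arguments of $g_1,g_2,g_3$ is precisely the reflection pattern $g_1(\cdot,-\cdot)$, $g_2(-\cdot,\cdot)$, $g_3(-\cdot,-\cdot)$ that defines $h=\beta g$. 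Carrying this out term by term converts $A(x_1,x_2)$ into $\int_{\widehat{G^2}}(\beta g)(\omega_i,\omega_j)\,\overline{\omega_j(x_2)}\,\overline{\omega_i(x_1)}\,d^2_{\mu_{\widehat{G^2}}}(\omega_i,\omega_j)=\mathcal{F}_r^{-1}h(-x_1,-x_2)=H_r(x_1,x_2)$, which is exactly \eqref{eq3.2}. I expect this term-by-term computation to be the main obstacle: noncommutativity forbids simply sliding the exponentials through $g$, so one must line up the conjugation flips coming from the $\mathbb{C}_i$- and $\mathbb{C}_j$-commutation relations with the argument reflections built into $\beta$ and keep careful track of which of the four components is being moved — a single sign error breaks the identity.

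For the last assertion, $|\beta g(\omega_i,\omega_j)|^2=g_0(\omega_i,\omega_j)^2+g_1(\omega_i,-\omega_j)^2+g_2(-\omega_i,\omega_j)^2+g_3(-\omega_i,-\omega_j)^2$, so integrating over $\widehat{G^2}$ and applying the Haar-measure-preserving reflections $\omega_i\mapsto-\omega_i$, $\omega_j\mapsto-\omega_j$ to the last three summands gives $\|h\|_2^2=\sum_{m=0}^{3}\int_{\widehat{G^2}}|g_m|^2\,d^2_{\mu_{\widehat{G^2}}}=\int_{\widehat{G^2}}|g|^2\,d^2_{\mu_{\widehat{G^2}}}=\|g\|_2^2$, using $|g|^2=\sum_{m=0}^{3}|g_m|^2$ from Remark \ref{re2.1}; hence $\|g\|_2=\|h\|_2$, which finishes the plan.
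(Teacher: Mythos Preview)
Your proposal is correct and follows essentially the same route as the paper: decompose $g=g_0+ig_1+jg_2+kg_3$, push the character factors $\overline{\omega_i(x_1)}$ and $\overline{\omega_j(x_2)}$ past $i,j,k$ via the $\mathbb{C}_i$/$\mathbb{C}_j$ commutation rules, absorb the resulting conjugations by the Haar-invariant reflections $\omega_i\mapsto-\omega_i$, $\omega_j\mapsto-\omega_j$ to recognise $h=\beta g$, and apply Fubini (which you justify more carefully than the paper does). Your argument for $\|g\|_2=\|h\|_2$ via the same reflections is likewise exactly what the paper has in mind when it says this is ``easy to verify.''
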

\begin{proof}
Write $g=g_0+ig_1+jg_2+kg_3$, then
\begin{align*}
&\int_{\widehat{G^2}}{\overline{\omega_i{(x}_1)}\ \overline{\omega_j{(x}_2)}g\left(\omega_i,\omega_j\right)}{d^2}_{{\mu }_{\widehat{G^2}}}\left(\omega_i,\omega_j\right)\\
&=\int_{\widehat{G^2}}{g_0\left(\omega_i,\omega_j\right)\overline{\omega_i{(x}_1)}\ \overline{\omega_j{(x}_2)}}{d^2}_{{\mu }_{\widehat{G^2}}}\left(\omega_i,\omega_j\right)\\
&+\int_{\widehat{G^2}}{ig_1\left(\omega_i,\omega_j\right)\overline{{\omega_i(x}_1)}\omega_j{(x}_2)}{d^2}_{{\mu }_{\widehat{G^2}}}\left(\omega_i,\omega_j\right)\\
&+\int_{\widehat{G^2}}{jg_2\left(\omega_i,\omega_j\right)\omega_i{(x}_1)\overline{\omega_j{(x}_2)}}{d^2}_{{\mu }_{\widehat{G^2}}}\left(\omega_i,\omega_j\right)\\
&+\int_{\widehat{G^2}}{kg_3\left(\omega_i,\omega_j\right)\omega_i{(x}_1)\omega_j{(x}_2)}{d^2}_{{\mu }_{\widehat{G^2}}}\left(\omega_i,\omega_j\right)\\
&=\int_{\widehat{G^2}}{g_0\left(\omega_i,\omega_j\right)\overline{\omega_i{(x}_1)}\ \overline{\omega_j{(x}_2)}}{d^2}_{{\mu }_{\widehat{G^2}}}\left(\omega_i,\omega_j\right)\\
&+\int_{\widehat{G^2}}{ig_1\left(\omega_i,-\omega_j\right)\overline{{\omega_i(x}_1)}\ \overline{\omega_j{(x}_2)}}{d^2}_{{\mu }_{\widehat{G^2}}}\left(\omega_i,\omega_j\right)\\
&+\int_{\widehat{G^2}}{jg_2\left(-\omega_i,\omega_j\right)\overline{\omega_i{(x}_1)}\ \overline{\omega_j{(x}_2)}}{d^2}_{{\mu }_{\widehat{G^2}}}\left(\omega_i,\omega_j\right)\\
&+\int_{\widehat{G^2}}{kg_3\left(-\omega_i,-\omega_j\right)\overline{\omega_i{(x}_1)}\ \overline{\omega_j{(x}_2)}}{d^2}_{{\mu }_{\widehat{G^2}}}\left(\omega_i,\omega_j\right)\\
&=\int_{\widehat{G^2}}{\beta g\left(\omega_i,\omega_j\right)\overline{\omega_i{(x}_1)}\ \overline{\omega_j{(x}_2)}}{d^2}_{{\mu }_{\widehat{G^2}}}\left(\omega_i,\omega_j\right)\\
&=\int_{\widehat{G^2}}{h\left(\omega_i,\omega_j\right)\overline{\omega_i{(x}_1)}\ \overline{\omega_j{(x}_2)}}{d^2}_{{\mu }_{\widehat{G^2}}}\left(\omega_i,\omega_j\right)\\
&=\int_{\widehat{G^2}}{h\left(\omega_i,\omega_j\right)\omega_i{(-x}_1)\omega_j{(-x}_2)}{d^2}_{{\mu }_{\widehat{G^2}}}\left(\omega_i,\omega_j\right)\\
&=H_r\left(x_1,x_2\right).
\end{align*}
Applying Fubini's theorem, we get
\begin{align*}
&\int_{\widehat{G^2}}{{\mathcal{F}}_r(f)\left(\omega_i,\omega_j\right)g(\omega_i,\omega_j)}{d^2}_{{\mu }_{\widehat{G^2}}}\left(\omega_i,\omega_j\right)\\
&=\int_{\widehat{G^2}}{\left(\int_{G^2}{f\left(x_1,x_2\right)}\overline{\omega_i{(x}_1)}\ \overline{\omega_j{(x}_2)}{d^2}_{{\mu }_{G^2}}\left(x_1,x_2\right)\right)g\left(\omega_i,\omega_j\right)}{d^2}_{{\mu }_{\widehat{G^2}}}\left(\omega_i,\omega_j\right)\\
&=\int_{G^2}{f\left(x_1,x_2\right)\left(\int_{\widehat{G^2}}{\overline{\omega_i{(x}_1)}\ \overline{\omega_j{(x}_2)}g\left(\omega_i,\omega_j\right)}{d^2}_{{\mu }_{\widehat{G^2}}}\left(\omega_i,\omega_j\right)\right)}{d^2}_{{\mu }_{G^2}}\left(x_1,x_2\right)\\
&=\int_{G^2}{f\left(x_1,x_2\right)H_r\left(x_1,x_2\right)}{d^2}_{{\mu }_{G^2}}\left(x_1,x_2\right).
\end{align*}
If $g\ \in \ L^2\left(\widehat{G^2}, \mathbb{H}\right)$, then it is easy to verify that ${\left\|g\right\|}_2={\left\|h\right\|}_2$ by the definition of $\beta $.
\end{proof}
\begin{remark}\label{re3.8}
The multiplication formula of complex Fourier transform has the form\\
 \[\int_{\widehat{G^2}}{\mathcal{F}}_s\left(f\right)\left(\omega_i,\omega_j\right){\mathcal{F}}_s\left(g\right)\left(\omega_i,\omega_j\right){d^2}_{{\mu }_{\widehat{G^2}}}\left(\omega_i,\omega_j\right)=\int_{G^2}{f(x_1,x_2)g\left(x_1,x_2\right)}{d^2}_{{\mu }_{G^2}}\left(x_1,x_2\right).\]\\
But when $f\in L^1\left(G^2, \mathbb{H}\right)\ $and $g\in L^1\left(\widehat{G^2}, \mathbb{H}\right)$, this standard formula is not valid for RQFT of integrable functions. Using the auxiliary transform $\beta $, we may obtain an analogous formula \eqref{eq3.2} for quaternion-valued integrable functions.
\end{remark}
\subsection{The Plancherel theorem of RQFT} \label{su3.2}

Based on complex version of the Plancherel theorem, we are going to show that if $f\in L^1\left(G^2, \mathbb{H}\right)\cap L^2\left(G^2, \mathbb{H}\right)$, then it turns out that $\hat{f}\in L^2\left(\widehat{G^2}, \mathbb{H}\right)$ and $\|\hat{f}\|_2=\|f\|_2$, where $\hat{f}$ is the RFT of $f$. Moreover, this isometry of $L^1\left(G^2, \mathbb{H}\right)\cap L^2\left(G^2, \mathbb{H}\right)$ into $L^2\left(G^2, \mathbb{H}\right)$ extends to an isometric of $L^2\left(G^2, \mathbb{H}\right)$ onto $L^2\left(G^2, \mathbb{H}\right)$, and this extension defines the Fourier transform of every $f\in \ L^2\left(G^2, \mathbb{H}\right)$. The convolution theorem plays a vital role in proving the Plancherel theorem. However, the classical convolution theorem no longer holds for the QFT. The Plancherel theorem of QFT was discussed in recent research papers (see \cite{Cheng,Hart}) in the case $G=\mathbb{R}$. We give a restatement of the Plancherel theorem here, since the prerequisites for setting up of the theorem may not be put forward, so clearly in recent research papers. It is probably worth pointing out that Proposition 3.2 plays a key role in our proof.
\begin{theorem}\label{th3.9}
If $f\in\ L^1\left(G^2, \mathbb{H}\right)\cap L^2\left(G^2, \mathbb{H}\right)$, then ${\mathcal{F}}_rf\in\ L^2\left(\widehat{G^2}, \mathbb{H}\right)$ and Parseval's identity ${\left\|{\mathcal{F}}_rf\right\|}^2_2={\left\|f\right\|}^2_2$ holds.
\end{theorem}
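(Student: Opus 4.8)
The plan is to imitate the classical derivation of Plancherel's theorem, with Proposition~\ref{pr3.2} taking the place of the convolution theorem (which fails for the QFT). I would first set up the auxiliary function of Proposition~\ref{pr3.2}: put $\tilde f(x_1,x_2)=\overline{f(-x_1,-x_2)}$ and $g=\tilde f*f$. Since $f\in L^1(G^2,\mathbb{H})\cap L^2(G^2,\mathbb{H})$, so is $\tilde f$, and by the Cauchy--Schwarz inequality applied to $|f|$ the integral
\[
g(x_1,x_2)=\int_{G^2}\overline{f(y_1,y_2)}\,f(x_1+y_1,x_2+y_2)\,d^2_{\mu_{G^2}}(y_1,y_2)
\]
converges absolutely for every $(x_1,x_2)$, satisfies $|g(x_1,x_2)|\le\|f\|_2^2$ everywhere, and — using Lemma~\ref{le2.5} (continuity of translation in $L^2$) — is continuous on $G^2$. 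Hence $g\in L^{\infty}(G^2,\mathbb{H})\cap C(G^2,\mathbb{H})$ and $g(0,0)=\int_{G^2}|f|^2\,d^2_{\mu_{G^2}}=\|f\|_2^2$.

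Next I would combine two facts. First, by Theorem~\ref{th2.6}(ii), since $g$ is bounded and continuous at $(0,0)$, one has $(g*P^l)(0,0)\to g(0,0)=\|f\|_2^2$ as $l\to\infty$. Second, Proposition~\ref{pr3.2} gives, for every $l$,
\[
\int_{\widehat{G^2}}\Phi_1(\alpha_i^{-l}(\omega_i))\,\Phi_2(\alpha_j^{-l}(\omega_j))\,\bigl|\mathcal{F}_rf(\omega_i,\omega_j)\bigr|^2\,d^2_{\mu_{\widehat{G^2}}}(\omega_i,\omega_j)=Sc\bigl((f*P^l)(0,0)\bigr),
\]
and the computation in the proof of Proposition~\ref{pr3.2} identifies this scalar part with $Sc\bigl((g*P^l)(0,0)\bigr)$. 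Taking scalar parts in the first fact and using that the left-hand side above is real, I conclude
\[
\lim_{l\to\infty}\int_{\widehat{G^2}}\Phi_1(\alpha_i^{-l}(\omega_i))\,\Phi_2(\alpha_j^{-l}(\omega_j))\,\bigl|\mathcal{F}_rf(\omega_i,\omega_j)\bigr|^2\,d^2_{\mu_{\widehat{G^2}}}(\omega_i,\omega_j)=\|f\|_2^2 .
\]

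It then remains to move the limit inside the integral. The integrand is nonnegative, and since $\Phi=(\Phi_1,\Phi_2)$ is increasing to $1$, the factor $\Phi_1(\alpha_i^{-l}(\omega_i))\Phi_2(\alpha_j^{-l}(\omega_j))$ tends pointwise on $\widehat{G^2}$ to $\Phi_1(0)\Phi_2(0)=1$. Fatou's lemma then yields $\int_{\widehat{G^2}}|\mathcal{F}_rf|^2\,d^2_{\mu_{\widehat{G^2}}}\le\|f\|_2^2<\infty$, which already establishes $\mathcal{F}_rf\in L^2(\widehat{G^2},\mathbb{H})$; and now that $|\mathcal{F}_rf|^2$ is integrable and the kernels $\Phi_1,\Phi_2$ are uniformly bounded (being in $C_0$), the dominated convergence theorem applies and forces equality, $\|\mathcal{F}_rf\|_2^2=\|f\|_2^2$. (When the kernels happen to be monotone in $l$, as in the Poisson-kernel example of Section~\ref{se2}, a single appeal to the monotone convergence theorem handles both steps.)

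The step requiring the most care — and the one I would flag as the main obstacle — is precisely this interchange of limit and integral: one cannot invoke dominated convergence at the outset because the $L^2$-integrability of $\mathcal{F}_rf$ is not known a priori, so the Fatou-then-dominated-convergence two-step is essential. A secondary point deserving genuine verification is that $g=\tilde f*f$ is continuous and bounded \emph{everywhere}, not merely almost everywhere, since Theorem~\ref{th2.6}(ii) is applied at the single point $(0,0)$; this is exactly where membership of $f$ (hence $\tilde f$) in $L^2$ and the translation-continuity Lemma~\ref{le2.5} come in.
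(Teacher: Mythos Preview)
Your proposal is correct and follows essentially the same approach as the paper: form $g=\tilde f*f$, show $g$ is bounded and continuous (so Theorem~\ref{th2.6}(ii) applies at the origin), invoke Proposition~\ref{pr3.2} to identify $Sc\bigl((g*P^l)(0,0)\bigr)$ with the weighted $L^2$-integral of $\mathcal{F}_rf$, and pass to the limit. Your Fatou-then-dominated-convergence two-step is in fact a cleaner justification of the final limit than the paper's, which asserts the integrand ``increases'' and then cites the dominated convergence theorem --- an argument that tacitly presumes monotonicity of $l\mapsto\Phi_1(\alpha_i^{-l}\omega_i)\Phi_2(\alpha_j^{-l}\omega_j)$ that is not guaranteed by Definition~\ref{de2.3} in general.
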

\begin{proof}
We fix $f\in \ L^1\left(G^2, \mathbb{H}\right)\cap L^2\left(G^2, \mathbb{H}\right)$. Put $\tilde{f}\left(x_1,x_2\right):=\overline{f\left(-x_1,{-x}_2\right)}$ and define $g\left(x_1,x_2\right)=\left(\tilde{f}*f\right)\left(x_1,x_2\right)$. Trivially
\[g\left(x_1,x_2\right)=\int_{G^2}{\overline{f\left(y_1,y_2\right)}f\left(x_1+y_1,x_2+y_2\right)}\ {d^2}_{{\mu }_{G^2}}\left(y_1,y_2\right).\]
 Since by Lemma \ref{le2.4} $\left(x_1,x_2\right)\mapsto L_{(-x_1,{-x}_2)}\overline{f}$ is a continuous mapping of $G^2$ into $L^2\left(G^2, \mathbb{H}\right)$ and by the continuity of the inner product, we see that $g\left(x_1,x_2\right)$ is a continuous function.\\
The function $g$ is bounded by the Cauchy-Schwartz inequality;
\[\left|g\left(x_1,x_2\right)\right|\le {\left\|L_{(-x_1,{-x}_2)}\overline{f}\right\|}_2{\left\|f\right\|}_2={\left\|f\right\|}^2_2.\]
  Furthermore, $g\in \ L^1\left(G^2, \mathbb{H}\right)$, since $f\in \ L^1\left(G^2, \mathbb{H}\right)$ and $\hat{f}\in \ L^1\left(\widehat{G^2}, \mathbb{H}\right)$. Moreover $g$ is continuous and bounded and Lemma \ref{le2.7} shows that
\[{\mathop{\lim }_{l\to \infty } Sc\left(\left(g*{{\rm p}}^l\right)\left(0,0\right)\right)=Sc\left(g\left(0,0\right)\right)={\left\|f\right\|}^2_2.\ }.\]
On the other hand, since $g\in \ L^1\left(G^2, \mathbb{H}\right)$, by Proposition \ref{pr3.2} we have
\[Sc\left(\left(g*{{\rm P}}^l\right)\left(0,0\right)\right)=\int_{\widehat{G^2}}{{\Phi }_1({\alpha }^{-l}_i\left(\omega_i\right)){\Phi }_2({\alpha }^{-l}_j\left(\omega_j\right)){\left|{\mathcal{F}}_r\left(f\right)\left(\omega_i,\omega_j\right)\right|}^2{d^2}_{{\mu }_{\widehat{G^2}}}\left(\omega_i,\omega_j\right)}.
\]
Since $0\le {\Phi }_1({\alpha }^{-l}_i\left(\omega_i\right)){\Phi }_2({\alpha }^{-l}_j\left(\omega_j\right)){\left|{\mathcal{F}}_r\left(f\right)\left(\omega_i,\omega_j\right)\right|}^2$ increases to ${\left|{\mathcal{F}}_r\left(f\right)\left(\omega_i,\omega_j\right)\right|}^2$ as $l \to \infty$,  the dominated convergence theorem gives
\[{\mathop{\lim }_{l{\rm \ }\to {\rm \ }\infty } Sc\left(\left(g*{{\rm P}}^l\right)\left(0,0\right)\right)\ }=\int_{\widehat{G^2}}{{\left|{\mathcal{F}}_r\left(f\right)\left(\omega_i,\omega_j\right)\right|}^2{d^2}_{{\mu }_{\widehat{G^2}}}\left(\omega_i,\omega_j\right)}={\left\|{\mathcal{F}}_rf\right\|}^2_2.\]
Therefore, ${\mathcal{F}}_rf\in \ L^2\left(\widehat{G^2}, \mathbb{H}\right)$ and ${\left\|{\mathcal{F}}_rf\right\|}^2_2={\left\|f\right\|}^2_2$.
\end{proof}

By Theorem \ref{th3.9}, ${{\mathcal{F}}_r|}_{L^1\left(G^2, \mathbb{H}\right)\cap L^2\left(G^2, \mathbb{H}\right)}$ is an isometry of $L^1\left(G^2, \mathbb{H}\right)\cap L^2\left(G^2, \mathbb{H}\right)$ into $L^2\left(\widehat{G^2}, \mathbb{H}\right)$. Since $L^1\left(G^2, \mathbb{H}\right)\cap L^2\left(G^2, \mathbb{H}\right)$ is a dense subset of $L^2\left(G^2, \mathbb{H}\right)$, therefore, there exists a unique bounded  extension, say $\Omega_r$, of ${{\mathcal{F}}_r|}_{L^1\left(G^2, \mathbb{H}\right)\cap L^2\left(G^2, \mathbb{H}\right)}$ to all of $L^2\left(G^2, \mathbb{H}\right)$. If $f\in L^2\left(G^2, \mathbb{H}\right)\ $ and $F_r=\Omega_rf$ is defined by the $L^2$-limit of the sequence \{${{\mathcal{F}}_rf_l\}}_{l\in \mathbb{N}}$, where ${\{f_l\}}_{l\in \mathbb{N}}$ is any sequence in $L^1\left(G^2, \mathbb{H}\right)\cap L^2\left(G^2, \mathbb{H}\right)$ converging to $f$ in the $L^2$-norm. If $f_l\left(x_1,x_2\right)=f\left(x_1,x_2\right){\chi }_{\bigcup_{n=1}^l}{{\alpha }^{-n}(K_0)}\left(x_1,x_2\right)$, where $K_0$ is a compact symmetric Neighborhood of identity in $G^2$ and $\alpha \ $ is as in Lemma 2.4, then
\[F_r\left(\omega_i,\omega_j\right)={\mathop{\lim }_{l{\rm \ }\to \infty } \int_{\bigcup_{n=1}^l }{{\alpha }^{-n}(K_0)}}{{\rm \ }f\left(x_1,x_2\right)\overline{\omega_i(x_1)}\ \overline{\omega_j(x_2)}{d^2}_{{\mu }_{G^2}}\left(x_1,x_2\right)},\]
where $f=\mathop{{\rm lim}}_{l\to \infty }f_l$ means ${\left\|f-f_l\right\|}_2\to 0$ as $l \to \infty$.

We call $F_r=\Omega_rf$  the RQFT from $L^2\left(G^2, \mathbb{H}\right)$ into $L^2\left(\widehat{G^2}, \mathbb{H}\right)$.The multiplication formula \eqref{eq3.2} easily extends to $L^2\left(\widehat{G^2}, \mathbb{H}\right)$. The left $\mathbb{H}$-linear operator $\Omega_r\ $on $L^2\left(\widehat{G^2}, \mathbb{H}\right)\ $is an isometry. So $\Omega_r\ $is a one-to-one mapping. Moreover, we can show that $\Omega_r\ $is onto.
\begin{theorem}\label{th3.10}
The RQFT, $\Omega_r$, is a unitary operator from $L^2\left(G^2, \mathbb{H}\right)$ onto $L^2\left(\widehat{G^2}, \mathbb{H}\right)$.
\end{theorem}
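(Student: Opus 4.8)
The plan is to use that, by Theorem~\ref{th3.9}, $\Omega_r$ is already an isometry --- hence injective with closed range --- and to obtain surjectivity by exhibiting a two-sided inverse built from the $L^2$-extension of the inverse transform; a short polarization remark will then upgrade the norm isometry to a unitary of quaternionic Hilbert modules. First, I would produce the dual object: because of Pontryagin duality $\widehat{\widehat{G^2}}\cong G^2$, and since Proposition~\ref{pr3.2} has an evident counterpart with the roles of $G^2$ and $\widehat{G^2}$ interchanged and $\mathcal{F}_r$ replaced by $\mathcal{F}_r^{-1}$, the argument proving Theorem~\ref{th3.9} applies verbatim to show that the restriction of $\mathcal{F}_r^{-1}$ to $L^1(\widehat{G^2},\mathbb{H})\cap L^2(\widehat{G^2},\mathbb{H})$ preserves the $L^2$ norm, hence extends uniquely to an isometry $\Psi_r$ of $L^2(\widehat{G^2},\mathbb{H})$ into $L^2(G^2,\mathbb{H})$.

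Next, I would check that $\Psi_r\Omega_r$ and $\Omega_r\Psi_r$ are the identity. Put $\mathcal{B}=\{f\in L^1(G^2,\mathbb{H})\cap L^2(G^2,\mathbb{H}):\mathcal{F}_rf\in L^1(\widehat{G^2},\mathbb{H})\}$ and let $\mathcal{B}'\subset L^2(\widehat{G^2},\mathbb{H})$ be the analogous set for $\mathcal{F}_r^{-1}$. For $f\in\mathcal{B}$ one has $\mathcal{F}_rf\in L^1\cap L^2$, whence $\Psi_r(\Omega_rf)=\mathcal{F}_r^{-1}(\mathcal{F}_rf)=f$ by the inversion Theorem~\ref{th3.3}, and symmetrically $\Omega_r(\Psi_r\psi)=\psi$ for $\psi\in\mathcal{B}'$. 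The remaining point is density of $\mathcal{B}$ and $\mathcal{B}'$: given $f$ in the dense set $L^1(G^2,\mathbb{H})\cap L^2(G^2,\mathbb{H})$, the mollifications $f*P^l$ converge to $f$ in $L^2$ by Theorem~\ref{th2.6}(i), while the spectral representation of $f*P^l$ from Proposition~\ref{pr3.2} displays $\mathcal{F}_r(f*P^l)$ as $\Phi_1(\alpha_i^{-l}(\cdot))\Phi_2(\alpha_j^{-l}(\cdot))$ times bounded factors (the transforms of $f_1$ and $f_2 j$); since $\Phi_1,\Phi_2\in L^1$, this lies in $L^1(\widehat{G^2},\mathbb{H})$, so $f*P^l\in\mathcal{B}$, and the same computation on $\widehat{G^2}$ handles $\mathcal{B}'$.

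Finally, since $\Psi_r\Omega_r$ and $\Omega_r\Psi_r$ are bounded and agree with the identity on the dense sets $\mathcal{B}$ and $\mathcal{B}'$, one gets $\Psi_r\Omega_r=\mathrm{id}$ and $\Omega_r\Psi_r=\mathrm{id}$; in particular $\psi=\Omega_r(\Psi_r\psi)$ for every $\psi$, so $\Omega_r$ is onto, and it is injective because it is an isometry. To see that $\Omega_r$ is unitary and not merely norm-preserving, I would note that, being $\mathbb{R}$-linear and norm-preserving, $\Omega_r$ preserves the real inner product $\langle\cdot,\cdot\rangle$ by polarization, and then combine left $\mathbb{H}$-linearity with the identity $(f,g)=\langle f,g\rangle+\langle f,ig\rangle i+\langle f,jg\rangle j+\langle f,kg\rangle k$ (an easy consequence of \eqref{eq2.2}) to conclude $(\Omega_r f,\Omega_r g)=(f,g)$, so that $\Omega_r$ preserves the $\mathbb{H}$-valued inner product \eqref{RRR} as well; a bijective inner-product-preserving map is unitary, with $\Omega_r^{-1}=\Psi_r$.

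The step I expect to be the main obstacle is the density of $\mathcal{B}$ (equivalently $\mathcal{B}'$): the classical shortcut ``$\widehat{f*P^l}=\widehat f\,\widehat{P^l}\in L^1$ because $\widehat{P^l}\in L^1$'' is unavailable, since the right-sided QFT has no convolution theorem, so one must instead read off the integrability of $\mathcal{F}_r(f*P^l)$ from the explicit kernel identity of Proposition~\ref{pr3.2}. Everything else follows the standard ``an isometry with dense range is onto'' template.
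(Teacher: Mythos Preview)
Your strategy differs substantially from the paper's. The paper does not construct an explicit inverse at this stage; instead, it notes that the range $R(\Omega_r)$ is closed (immediate from the isometry property) and then shows $R(\Omega_r)=L^2(\widehat{G^2},\mathbb{H})$ by an orthogonal-complement argument: if some nonzero $u$ were orthogonal to the range, the modified multiplication formula (Theorem~\ref{th3.7}) applied with $g=\bar u$, $h=\beta g$ would force $\int_{G^2} f\,H_r=0$ for all $f$, and choosing $f=\overline{H_r}$ gives $H_r=0$, contradicting $\|H_r\|_2=\|h\|_2=\|u\|_2\neq 0$. Thus the paper leans on the quaternion-specific tool Theorem~\ref{th3.7} and bypasses entirely the need for a dense class on which $\mathcal{F}_r^{-1}\mathcal{F}_r=\mathrm{id}$.

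Your route is the ``build a two-sided inverse'' template; it is natural, but the step you yourself flag as the main obstacle is a genuine gap as written. Proposition~\ref{pr3.2} does \emph{not} display $\mathcal{F}_r(f*P^l)$: it displays $(f*P^l)(x_1,x_2)$ itself, as a sum of two integrals over $\widehat{G^2}$ with kernels $\omega_i(x_1)(\cdots)\omega_j(x_2)$ and $\overline{\omega_i(x_1)}(\cdots)\omega_j(x_2)$. Neither of these is the kernel $(\cdots)\omega_j(x_2)\omega_i(x_1)$ of $\mathcal{F}_r^{-1}$, so you cannot simply ``read off'' that $\mathcal{F}_r(f*P^l)$ equals $\Phi_1(\alpha_i^{-l}(\cdot))\Phi_2(\alpha_j^{-l}(\cdot))$ times bounded factors. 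To extract $\mathcal{F}_r(f*P^l)\in L^1$ from that identity you would have to pass between two-sided and right-sided kernels --- essentially the algebra of the map $\mathcal{W}$ in Proposition~\ref{pr4.3} --- and then still invoke a uniqueness/inversion statement on $\widehat{G^2}$; none of this is automatic, and the absence of a convolution theorem for the RQFT means there is no shortcut. The paper's orthogonality-plus-multiplication-formula argument is designed precisely to sidestep this difficulty, which is what Theorem~\ref{th3.7} was set up for. Your closing polarization remark (upgrading norm preservation to preservation of the $\mathbb{H}$-valued form) is correct, and the paper records that separately as Theorem~\ref{thkh3.10}.
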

\begin{proof}
Firstly, we show that the range of $\Omega_r$, denoted by $R( \Omega_r)$, is a closed subspace of $L^2\left(\widehat{G^2}, \mathbb{H}\right)$. Let ${F_{r_l}:=\Omega_r\left(f_l\right)}$, ${l\in \mathbb{N}}$, be a sequence in $R(\Omega_r)$ converging to ${{\rm F}}$ in $L^2$-norm sense. The isometric property shows that $\Omega_r\ $ is continuous and$\ {\{f_l\}}_{l\in \mathbb{N}}$ is also a Cauchy sequence. The completeness of $L^2\left(G^2, \mathbb{H}\right)$ implies that $\ {\{f_l\}}_{l\in \mathbb{N}}$ converges to some $f\in L^2\left(G^2, \mathbb{H}\right)$, and the continuity of $\Omega_r\ $shows that
\[\Omega_rf={\mathop{\lim }_{l\to \infty } {\rm \ }\Omega_r\left(f_l\right)\ }={\rm F}.\]
 If $R(\Omega_r)$ is not all of $L^2\left(\widehat{G^2}, \mathbb{H}\right)$, as every closed subspace of the Hilbert space $L^2\left(G^2, \mathbb{H}\right)\ $has an orthogonal complement, we could find a function $ u$ such that \[\int_{\widehat{G^2}}{\Omega_r(f)\left(\omega_i,\omega_j\right)\overline{u}(\omega_i,\omega_j)}{d^2}_{{\mu }_{\widehat{G^2}}}\left(\omega_i,\omega_j\right)=0\]
for all $f\ \in L^1\left(G^2, \mathbb{H}\right)$ and ${\left\|u\right\|}_2\ne 0$. Let $g=\overline{u},h=\beta g$; then by multiplication formula,
\[\int_{\widehat{G^2}}{F_r\left(\omega_i,\omega_j\right)g(\omega_i,\omega_j)}{d^2}_{{\mu }_{\widehat{G^2}}}\left(\omega_i,\omega_j\right)=\int_{G^2}{f(x_1,x_2)H_r\left(x_1,x_2\right)}{d^2}_{{\mu }_{G^2}}\left(x_1,x_2\right)=0\]
 for all $f\ \in L^2\left(G^2, \mathbb{H}\right)\cap L^2\left(G^2, \mathbb{H}\right)$. Pick$\ f=\overline{H_r}$, this implies that $H_r\left(x_1,x_2\right)=0$ for almost every $(x_1,x_2)\in G^2$, contradicting the fact that ${\left\|H_r\right\|}_2={\left\|h\right\|}_2={\left\|g\right\|}_2={\left\|u\right\|}_2\ne 0$.
 \end{proof}
Next result shows that the mapping $\Omega_r$ is a Hilbert space isomorphism of $L^2\left(\widehat{G^2}, \mathbb{H}\right)$, that is, preserving inner product or so-called the Parseval theorem.
\begin{theorem}\label{thkh3.10}
Let $f,g\in \ L^2\left(G^2, \mathbb{H}\right)$ and $F_r=\Omega_rf$, ${\Gamma }_r=\Omega_rg$. Then
\[\int_{G^2}{f\left(x_1,x_2\right)\overline{g\left(x_1,x_2\right)}{d^2}_{{\mu }_{G^2}}\left(x_1,x_2\right)}=\int_{\widehat{G^2}}{F_r\left(\omega_i,\omega_j\right)\overline{{\Gamma }_r\left(\omega_i,\omega_j\right)}{d^2}_{{\mu }_{\widehat{G^2}}}\left(\omega_i,\omega_j\right)}.\]
\end{theorem}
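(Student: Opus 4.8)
The plan is to obtain the quaternionic Parseval identity from the Plancherel equality ${\left\|\Omega_r h\right\|}_2={\left\|h\right\|}_2$, already established in Theorems \ref{th3.9} and \ref{th3.10}, by a polarization argument adjusted to the noncommutativity of $\mathbb{H}$. The key elementary fact is that the $\mathbb{H}$-valued form $(\cdot,\cdot)$ is recovered from its scalar part $\langle\cdot,\cdot\rangle$ and the left module structure: writing $(f,g)=q_0+q_1 i+q_2 j+q_3 k$ and using \eqref{eq2.2} with $p=1$, so that $(f,qg)=(f,g)\bar q$ for $q\in\{i,j,k\}$, a short computation with Hamilton's multiplication table gives
\[ (f,g)=\langle f,g\rangle+i\langle f,ig\rangle+j\langle f,jg\rangle+k\langle f,kg\rangle, \]
and the same identity holds on $L^2(\widehat{G^2},\mathbb{H})$. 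Thus it suffices to prove the real-valued equality $\langle f,g\rangle=\langle F_r,\Gamma_r\rangle$ for all $f,g\in L^2(G^2,\mathbb{H})$ and then apply it to the three pairs $(f,ig)$, $(f,jg)$, $(f,kg)$.

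First I would record that $\langle\cdot,\cdot\rangle$ is a genuine real inner product on $L^2(G^2,\mathbb{H})$: it is symmetric, since $\langle f,g\rangle=Sc\int f\overline{g}=Sc\int g\overline{f}=\langle g,f\rangle$ (using $Sc(\overline{q})=Sc(q)$), real-bilinear, and positive definite with $\langle f,f\rangle={\left\|f\right\|}_2^2$. Hence the polarization identity $4\langle u,v\rangle={\left\|u+v\right\|}_2^2-{\left\|u-v\right\|}_2^2$ is available. Since $\Omega_r$ is additive and satisfies ${\left\|\Omega_r h\right\|}_2={\left\|h\right\|}_2$ for every $h\in L^2(G^2,\mathbb{H})$ by Theorems \ref{th3.9} and \ref{th3.10}, we get ${\left\|\Omega_r f\pm\Omega_r g\right\|}_2={\left\|\Omega_r(f\pm g)\right\|}_2={\left\|f\pm g\right\|}_2$, and feeding this into the polarization identity yields $\langle F_r,\Gamma_r\rangle=\langle f,g\rangle$.

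To pass from the real identity to the full statement, I would use that $\Omega_r$ is left $\mathbb{H}$-linear, so $\Omega_r(ig)=i\,\Omega_r(g)=i\Gamma_r$, and likewise with $j$ and $k$ in place of $i$; this is clear from the definition of $\mathcal{F}_r$, where the quaternion scalar stays to the left of $f$ while the characters multiply on the right, and it is inherited by the $L^2$-extension $\Omega_r$. Applying the real Parseval identity of the previous paragraph to $(f,ig)$, $(f,jg)$, $(f,kg)$ gives $\langle f,ig\rangle=\langle F_r,i\Gamma_r\rangle$, $\langle f,jg\rangle=\langle F_r,j\Gamma_r\rangle$ and $\langle f,kg\rangle=\langle F_r,k\Gamma_r\rangle$; substituting these together with $\langle f,g\rangle=\langle F_r,\Gamma_r\rangle$ into the reconstruction formula on both sides yields $(f,g)=(F_r,\Gamma_r)$, which is precisely the claimed equality $\int_{G^2}f\left(x_1,x_2\right)\overline{g\left(x_1,x_2\right)}\,{d^2}_{{\mu }_{G^2}}\left(x_1,x_2\right)=\int_{\widehat{G^2}}F_r\left(\omega_i,\omega_j\right)\overline{\Gamma_r\left(\omega_i,\omega_j\right)}\,{d^2}_{{\mu }_{\widehat{G^2}}}\left(\omega_i,\omega_j\right)$.

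The main obstacle is purely the noncommutative bookkeeping: verifying the scalar-part reconstruction formula and using the left module structure consistently so that the imaginary units factor out of $\Omega_r$ on the correct side; all of the analytic content is already packaged in the Plancherel equality. An alternative route would be to derive the identity directly from the $L^2$-extension of the multiplication formula \eqref{eq3.2}, choosing $g$ there with $\beta g=\overline{\Gamma_r}$ as in the proof of Theorem \ref{th3.10}, but the polarization argument is shorter and avoids tracking the auxiliary transform $\beta$.
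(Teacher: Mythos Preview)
Your argument is correct and follows essentially the same route as the paper: both extract the real part $\langle f,g\rangle=\langle F_r,\Gamma_r\rangle$ from the norm identity of Theorem~\ref{th3.9} by polarization (the paper expands $\|f+g\|_2^2$ directly rather than writing $4\langle f,g\rangle=\|f+g\|_2^2-\|f-g\|_2^2$, which is the same thing), and then both recover the remaining three real components by applying this to the pairs $(f,ig)$, $(f,jg)$, $(f,kg)$, using the left $\mathbb{H}$-linearity of $\Omega_r$. Your explicit reconstruction formula $(f,g)=\langle f,g\rangle+i\langle f,ig\rangle+j\langle f,jg\rangle+k\langle f,kg\rangle$ is a clean way to package the last step, but the content is identical to the paper's computation of $p_m=q_m$ for $m=0,1,2,3$.
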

\begin{proof}
Let
\[{{\rm p}}_0+i{{\rm p}}_1+j{{\rm p}}_2+k{{\rm p}}_3=\int_{G^2}{f\left(x_1,x_2\right)}\overline{g\left(x_1,x_2\right)}{d^2}_{{\mu }_{G^2}}\left(x_1,x_2\right)\]
and
\[{{\rm q}}_0+i{{\rm q}}_1+j{{\rm q}}_2+k{{\rm q}}_3=\int_{\widehat{G^2}}{F_rf\left(\omega_i,\omega_j\right)}\overline{{{\rm \ }\Gamma }_r\left(\omega_i,\omega_j\right)}{d^2}_{{\mu }_{\widehat{G^2}}}\left(\omega_i,\omega_j\right).\]
From the Parseval's identity, we have
\[{\left\|f+g\right\|}^2_2={\left\|f\right\|}^2_2+{\left\|g\right\|}^2_2+2{{\rm p}}_0={\left\|F_r+{{\rm \ }\Gamma }_r\right\|}^2_2={\left\|F_r\right\|}^2_2+{\left\|{{\rm \ }\Gamma }_r\right\|}^2_2+2{{\rm q}}_0.\]
 Thus ${{\rm p}}_0={{\rm q}}_0$. By using properties of $L^2$-norm and applying Parseval's identity to the equalities
 ${\left\|f+ig\right\|}^2_2={\left\|F_r+{{\rm \ i}\Gamma }_r\right\|}^2_2$, ${\left\|f+jg\right\|}^2_2={\left\|F_r+{{\rm \ j}\Gamma }_r\right\|}^2_2$  and ${\left\|f+kg\right\|}^2_2={\left\|F_r+{{\rm \ k}\Gamma }_r\right\|}^2_2$,
 respectively, we can get ${{\rm p}}_m={{\rm q}}_m , (m = 1,2,3)$, which completes the proof.
\end{proof}
\begin{theorem}\label{th3.12}
The inverse $f={{{\rm \ }\Omega }_r}^{-1}F_r$ is the $L^2$-limit of the sequence \\
${\left\{{{\mathcal{F}}_r}^{-1}F_{r_l}\right\}}_{l\in \mathbb{N}}\ $, where ${\left\{F_{r_l}\right\}}_{l\in \mathbb{N}}\ $is any sequence in $L^1\left(\widehat{G^2}, \mathbb{H}\right)\cap L^2\left(\widehat{G^2}, \mathbb{H}\right)$\\ converging to $F_r\ $in the $L^2\left(\widehat{G^2}, \mathbb{H}\right)$ norm. If $F_{r_l}=F_r{\chi }_{\bigcup_{n=1}^l}{{\alpha }^{-l}(K_0)}$, where $K_0$ is a compact neighborhood of identity in $\widehat{G^2}$ and $\alpha $ is an automorphism in $\widehat{G^2}$, then
\[f\left(x_1,x_2\right)={\mathop{\lim }_{l\to \infty } \int_{\bigcup_{n=1}^l{{\alpha }^{-n}(K_0)}}{F_r\left(\omega_i,\omega_j\right)\omega_j(x_2)\omega_i(x_1){d^2}_{{\mu }_{\widehat{G^2}}}\left(\omega_i,\omega_j\right)}\ }.\]
In particular, if $F_r\in L^1\left(\widehat{G^2}, \mathbb{H}\right)\cap L^2\left(\widehat{G^2}, \mathbb{H}\right)$, then
\[f\left(x_1,x_2\right)=\int_{\widehat{G^2}}{F_r\left(\omega_i,\omega_j\right)\omega_j(x_2)\omega_i(x_1){d^2}_{{\mu }_{\widehat{G^2}}}\left(\omega_i,\omega_j\right)}.\]
\end{theorem}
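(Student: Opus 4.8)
The plan is to identify the abstract $L^2$-inverse $\Omega_r^{-1}$ --- which exists and is bounded because $\Omega_r$ is unitary, by Theorem \ref{th3.10} --- with the concrete integral transform $\mathcal{F}_r^{-1}$ of Definition \ref{de3.5} on the dense subspace $L^1\left(\widehat{G^2}, \mathbb{H}\right)\cap L^2\left(\widehat{G^2}, \mathbb{H}\right)$, and then to reach the formula for an arbitrary $F_r\in L^2\left(\widehat{G^2}, \mathbb{H}\right)$ by approximating it in $L^2$-norm with the truncations $F_{r_l}=F_r\chi_{E_l}$, where $E_l:=\bigcup_{n=1}^{l}\alpha^{-n}(K_0)$.

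First I would fix $F\in L^1\left(\widehat{G^2}, \mathbb{H}\right)\cap L^2\left(\widehat{G^2}, \mathbb{H}\right)$ and prove $\Omega_r^{-1}F=\mathcal{F}_r^{-1}F$ almost everywhere; this settles, in particular, the final assertion of the theorem. Applying the Parseval identity of Theorem \ref{thkh3.10} with second argument $g=\Omega_r^{-1}F$ (legitimate since $\Omega_r$ is onto) and conjugating gives $\left(\Omega_r^{-1}F,f\right)=\left(F,\Omega_rf\right)$ for every $f\in L^2\left(G^2, \mathbb{H}\right)$. Restricting to $f\in L^1\left(G^2, \mathbb{H}\right)\cap L^2\left(G^2, \mathbb{H}\right)$, where $\Omega_rf=\mathcal{F}_rf$, expanding $\mathcal{F}_rf$, observing that conjugating the integrand $f(x_1,x_2)\overline{\omega_i(x_1)}\,\overline{\omega_j(x_2)}$ moves $\overline{f}$ to the right and replaces $\overline{\omega_i(x_1)}\,\overline{\omega_j(x_2)}$ by $\omega_j(x_2)\omega_i(x_1)$, and then interchanging the two integrals by Fubini's theorem --- admissible because $F\in L^1\left(\widehat{G^2}, \mathbb{H}\right)$ and $f\in L^1\left(G^2, \mathbb{H}\right)$ --- one recognises the inner frequency integral as $\mathcal{F}_r^{-1}F$ and obtains
\[
\left(\Omega_r^{-1}F,f\right)=\left(F,\mathcal{F}_rf\right)=\int_{G^2}\bigl(\mathcal{F}_r^{-1}F\bigr)(x_1,x_2)\,\overline{f(x_1,x_2)}\;d^2_{\mu_{G^2}}(x_1,x_2)=\left(\mathcal{F}_r^{-1}F,f\right).
\]
Taking scalar parts yields $\langle\Omega_r^{-1}F-\mathcal{F}_r^{-1}F,\,f\rangle=0$ for all $f\in C_c\left(G^2, \mathbb{H}\right)$. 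Since $\Omega_r^{-1}F\in L^2\left(G^2, \mathbb{H}\right)$ while $\mathcal{F}_r^{-1}F\in L^{\infty}\left(G^2, \mathbb{H}\right)$ by Remark \ref{re3.6}, the difference is locally integrable; decomposing it into its four real components as in Remark \ref{re2.1}(i) and applying the classical vanishing lemma to each component, I conclude $\mathcal{F}_r^{-1}F=\Omega_r^{-1}F$ a.e.\ (so in passing $\mathcal{F}_r^{-1}F$ is forced into $L^2$).

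Next comes the truncation step. The sets $E_l=\bigcup_{n=1}^{l}\alpha^{-n}(K_0)$ increase with $l$; each $\alpha^{-n}(K_0)$ is compact, so every $E_l$ is compact, hence of finite Haar measure; and since $\alpha$ is contractive in the sense of Lemma \ref{le2.4}, for every $\left(\omega_i,\omega_j\right)$ the orbit point $\alpha^{n}\left(\omega_i,\omega_j\right)$ lies in $K_0$ for all large $n$, i.e.\ $\left(\omega_i,\omega_j\right)\in\alpha^{-n}(K_0)$, so $\bigcup_{l}E_l=\widehat{G^2}$. Hence $F_{r_l}=F_r\chi_{E_l}$ belongs to $L^1\left(\widehat{G^2}, \mathbb{H}\right)\cap L^2\left(\widehat{G^2}, \mathbb{H}\right)$ --- its $L^1$-membership by Cauchy--Schwarz, ${\left\|F_{r_l}\right\|}_1\le{\left|E_l\right|}^{1/2}{\left\|F_r\right\|}_2$ --- while ${\left\|F_{r_l}-F_r\right\|}^2_2=\int_{\widehat{G^2}\setminus E_l}{\left|F_r\right|}^2\to0$ by dominated convergence. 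As $\Omega_r^{-1}$ is bounded, $\Omega_r^{-1}F_{r_l}\to\Omega_r^{-1}F_r=f$ in $L^2\left(G^2, \mathbb{H}\right)$; by the preceding step $\Omega_r^{-1}F_{r_l}=\mathcal{F}_r^{-1}F_{r_l}$, which by Definition \ref{de3.5} equals $\int_{\bigcup_{n=1}^{l}\alpha^{-n}(K_0)}F_r\left(\omega_i,\omega_j\right)\omega_j(x_2)\omega_i(x_1)\,d^2_{\mu_{\widehat{G^2}}}\left(\omega_i,\omega_j\right)$. This is the claimed $L^2$-limit, and applying the preceding step directly to $F_r$ when $F_r\in L^1\left(\widehat{G^2}, \mathbb{H}\right)\cap L^2\left(\widehat{G^2}, \mathbb{H}\right)$ produces the closed integral formula.

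The main obstacle I anticipate lies in the second step: one must upgrade $\mathcal{F}_r^{-1}F$, known a priori only to be bounded, to a genuine $L^2$ function coinciding with $\Omega_r^{-1}F$. In the quaternionic setting this calls for care --- the order of the factors $\omega_i(x_1)$ and $\omega_j(x_2)$ has to be tracked through the conjugations and through the Fubini interchange, and the vanishing lemma must be invoked componentwise rather than directly on $\mathbb{H}$-valued functions. Once this identification is in hand, the truncation argument is routine, the hypothesis being used only to guarantee, via contractivity of $\alpha$, that $\bigcup_{l}E_l=\widehat{G^2}$.
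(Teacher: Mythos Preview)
Your proposal is correct and uses the same essential ingredients as the paper's proof: the adjoint relation $(\Omega_r^{-1}F,f)=(F,\Omega_r f)$ coming from Parseval (the paper phrases this via the quaternionic Riesz representation theorem, writing $\Omega_r^{-1}=\Omega_r^*$), followed by an expansion of $\mathcal{F}_r f$ and a Fubini interchange to recognize $\mathcal{F}_r^{-1}F$ on the other side. The difference is organizational. The paper works directly with an approximating sequence $\{F_{r_l}\}$ and computes $(g,\Omega_r^*F_r)=\lim_l\langle g,\mathcal{F}_r^{-1}F_{r_l}\rangle$ before pushing the limit inside the pairing; you instead first isolate the identification $\Omega_r^{-1}F=\mathcal{F}_r^{-1}F$ on $L^1\cap L^2$ via a componentwise vanishing argument against $C_c$, and only afterwards apply it to the truncations $F_{r_l}$. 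Your ordering is a little cleaner, since it makes explicit why the sequence $\{\mathcal{F}_r^{-1}F_{r_l}\}$ is Cauchy in $L^2$ --- namely because it coincides with $\{\Omega_r^{-1}F_{r_l}\}$ and $\Omega_r^{-1}$ is bounded --- a point the paper's version leaves implicit when it writes $\langle g,\lim_l\mathcal{F}_r^{-1}F_{r_l}\rangle$ without first justifying that the $L^2$-limit exists.
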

\begin{proof}
The quaternionic Riesz representation theorem (see \cite{Ghil}) guarantees that there exists a unique operator $\Omega^*_r \in B\left(L^2\left(\widehat{G^2}, \mathbb{H}\right),L^2\left(G^2, \mathbb{H}\right)\right)$, which is called the adjoint of $\Omega_r$, such that for all $f\in \ L^2\left(G^2, \mathbb{H}\right)$ and $g\in \ L^2\left(\widehat{G^2}, \mathbb{H}\right)$, $\left(\Omega_rf,g\right)=(f,\Omega^*_rg)$. Since $\Omega_r$is unitary, then ${\Omega_r}^{-1}=\Omega^*_r$. For any fixed $F_r\in L^2\left(\widehat{G^2}, \mathbb{H}\right)$, let ${\left\{F_{r_l}\right\}}_{l\in \mathbb{N}}$ be an arbitrary sequence in $L^1\left(\widehat{G^2}, \mathbb{H}\right)\cap L^2\left(\widehat{G^2}, \mathbb{H}\right)$ converging to $F_r$ in the $L^2\left(\widehat{G^2}, \mathbb{H}\right)$norm; then
\begin{align*}
&(g,\Omega^*_r F_r)=\left({\Gamma }_r,F_r\right)={\mathop{\lim }_{l\to \infty } \left({\Gamma }_r,F_{r_l}\right)\ }\\
&={\mathop{\lim }_{l\to \infty } \int_{\widehat{G^2}}{(\int_{\widehat{G^2}}{g\left(\omega_i,\omega_j\right)\overline{\omega_i\left(x_1\right)}\ \overline{\omega_j\left(x_2\right)}{d^2}_{{\mu }_{G^2}}\left(\omega_i,\omega_j\right))\overline{F_{r_l}\left(\omega_i,\omega_j\right)}{d^2}_{{\mu }_{\widehat{G^2}}}\left(\omega_i,\omega_j\right)}}\ }\\
&={\mathop{\lim }_{l\to \infty } \int_{\widehat{G^2}}{g\left(\omega_i,\omega_j\right)(\int_{\widehat{G^2}}{\overline{F_{r_l}\ \left(\omega_i,\omega_j\right)} \ \overline{\omega_i\left(x_1\right)}\ \overline{\omega_j\left(x_2\right)}{d^2}_{{\mu }_{\widehat{G^2}}}\left(\omega_i,\omega_j\right))}}\ }{d^2}_{{\mu }_{\widehat{G^2}}}\left(\omega_i,\omega_j\right)\\
&={\mathop{\lim }_{l\to \infty } \left\langle g,{{\mathcal{F}}_r}^{-1}F_{r_l}\right\rangle =\left\langle g,{\mathop{\lim }_{l\to \infty } {{\mathcal{F}}_r}^{-1}F_{r_l}\ }\right\rangle \ }.\end{align*}
For all $g\in L^1\left(\widehat{G^2} , \mathbb{H}\right)\cap L^2\left(\widehat{G^2}, \mathbb{H}\right)$. Thus $f={\Omega_r}^{-1}F_r=\Omega^*_rF_r={\mathop{\lim }_{l\to \infty } {{\mathcal{F}}_r}^{-1}F_{r_l}\ }$

In particular, if $F_r \in L^1\left(\widehat{G^2}, \mathbb{H}\right)\cap L^2\left(\widehat{G^2}, \mathbb{H}\right)$, then
\[f\left(x_1,x_2\right)=\int_{\widehat{G^2}}{F_r\left(\omega_i,\omega_j\right)\omega_j\left(x_2\right)\omega_i\left(x_1\right){d^2}_{{\mu }_{\widehat{G^2}}}\left(\omega_i,\omega_j\right),\ }\]
which completes the proof.
\end{proof}
\section{ The two-sided quaternion Fourier transform on locally compact abelian group}\label{se4}

In this section, based on the proof of the inversion formula for right sided quaternionic Fourier transform, we are going to prove the inversion formula for two-sided (sandwich) quaternion Fourier transform (SQFT).
\subsection{ The two-sided quaternion Fourier transform  in $L^2\left(G^2, \mathbb{H}\right)$ }\label{su4.1}
We have seen that the right sided quaternionic Fourier transform lacks some needed properties. Therefore we are going to sandwich the function in between the two Fourier characters, in order to obtain some more symmetric features.
The two-sided quaternionic Fourier transform (SQFT) of $f\in L^2({\mathbb{R}}^2, \mathbb{H})$ is considered in \cite{Cheng,Ell1,Hart,Bas,Bulow,Ell4}. By a similar argument we may define two-sided quaternionic Fourier transform (SQFT) of $f\in L^2\left(G^2 , \mathbb{H}\right)$, which is a function from $\widehat{G^2}\ $ to $\mathbb{H}$ as follows:
\[{\mathcal{F}}_s\left(f\right)\left(\omega_i,\omega_j\right)=\int_{G^2}{\overline{\omega_i(x_1)}\ f\left(x_1,x_2\right)\overline{\omega_j{(x}_2)}}\ {d^2}_{{\mu }_{G^2}}\left(x_1,x_2\right).\]
Unlike the RQFT, SQFT is not a left $\mathbb{H}$-linear operator. But SQFT is left ${\mathbb{C}}_i$-linear and right ${\mathbb{C}}_j$-linear. Moreover, SQFT could establish relationship with RQFT through the following transform.
\begin{definition}\label{de4.2}
For any function $f\left(x_1,x_2\right)=f_0\left(x_1,x_2\right)+if_1{\left(x_1,x_2\right)
+jf}_2\left(x_1,x_2\right)+kf_3\left(x_1,x_2\right)$, we define the transform $\mathcal{W}$ of $f$ by
\[\mathcal{W}f\left(x_1,x_2\right):=f_0\left(x_1,x_2\right)+if_1{\left(x_1,x_2\right)+jf}_2\left(-x_1,x_2\right)+kf_3\left(-x_1,x_2\right).\]
One can simply see that the transform $\mathcal{W}$ is bijection mapping on $L^p\left(G^2, \mathbb{H}\right)$, $p =1,2$. So the inverse of $\mathcal{W}$ can be defined and ${\mathcal{W}}^{-1}$ is actually equal to $\mathcal{W}$ itself.
\end{definition}
\begin{proposition}\label{pr4.3}
Let $f,g\in L^p\left(G^2, \mathbb{H}\right)$, $p =1,2$. Then the following assertions hold:
\begin{enumerate}
\item[(i)]
The transform $\mathcal{W}$ is a left ${\mathbb{C}}_i$-linear mapping on $L^p\left(G^2, \mathbb{H}\right)$.
\item[(ii)]
If $f\in L^p\left(G^2, \mathbb{H}\right)$, then
\begin{equation}
{\mathcal{F}}_sf={\mathcal{F}}_r(\mathcal{W}f). \label{eq4.1}
\end{equation}
Moreover, if $f\ $is ${\mathbb{C}}_i$-valued or $f\ $ is even with respect to first variable, then $ \mathcal{F}_sf={\mathcal{F}}_rf.$
\item[(iii)]
If $f,g\in L^2\left(G^2, \mathbb{H}\right)$, then
\[\left\langle \mathcal{W}f,\mathcal{W}g\right\rangle =\left\langle f,g\right\rangle, \ Sc\left(i\left(f,g\right)\right)=Sc(i\left(\mathcal{W}f,\mathcal{W}g\right)).\]
In particular ${\left\|f\right\|}_2={\left\|\mathcal{W}f\right\|}_2$.
\end{enumerate}
\end{proposition}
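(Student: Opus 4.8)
The strategy is to work throughout with the ``$\mathbb{C}_i$-pair'' decomposition of Remark \ref{re2.1}(ii): write $f=p+qj$ with $p=f_0+if_1$ and $q=f_2+if_3$, both $\mathbb{C}_i$-valued functions in $L^p\left(G^2,\mathbb{C}_i\right)$. Introduce the first-variable reflection $\check h(x_1,x_2):=h(-x_1,x_2)$; a one-line check with $ij=k$ gives $jf_2(-x_1,x_2)+kf_3(-x_1,x_2)=\check q(x_1,x_2)\,j$, so that $\mathcal{W}f=p+\check q\,j$. I shall repeatedly use three elementary facts: (a) two $\mathbb{C}_i$-valued quantities commute, and $\omega_i$ takes values in $\mathbb{T}_{\mathbb{C}_i}\subset\mathbb{C}_i$; (b) $jz=\bar z\,j$ for $z\in\mathbb{C}_i$, hence $j\,\overline{\omega_i(x_1)}=\omega_i(x_1)\,j$; (c) the map $x_1\mapsto -x_1$ is a topological automorphism of $G$, so, $G$ being abelian hence unimodular, it preserves the Haar measure, and $\omega_i(-x_1)=\overline{\omega_i(x_1)}$ because $\left|\omega_i(x_1)\right|=1$. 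Part (i) is then immediate: additivity of $\mathcal{W}$ is clear, and for $\lambda\in\mathbb{C}_i$ we have $\lambda f=\lambda p+(\lambda q)\,j$ with $\lambda q$ again $\mathbb{C}_i$-valued, while the reflection of $\lambda q$ is $\lambda\check q$, whence $\mathcal{W}(\lambda f)=\lambda p+\lambda\check q\,j=\lambda\,\mathcal{W}f$.

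For (ii) I expand both sides of \eqref{eq4.1} along $f=p+qj$. In $\mathcal{F}_sf(\omega_i,\omega_j)=\int_{G^2}\overline{\omega_i(x_1)}\,(p+qj)\,\overline{\omega_j(x_2)}$, fact (a) moves $\overline{\omega_i(x_1)}$ to the right of $p$ and of $q$, turning the first summand into $\mathcal{F}_r(p)(\omega_i,\omega_j)$ and the second into $\int_{G^2}q(x_1,x_2)\,\overline{\omega_i(x_1)}\,j\,\overline{\omega_j(x_2)}$. In $\mathcal{F}_r(\mathcal{W}f)=\int_{G^2}(p+\check q\,j)\,\overline{\omega_i(x_1)}\,\overline{\omega_j(x_2)}$ the first summand is again $\mathcal{F}_r(p)$; in the second I apply (b) to replace $j\,\overline{\omega_i(x_1)}$ by $\omega_i(x_1)\,j$ and then substitute $x_1\mapsto -x_1$ using (c), which converts $\check q(x_1,x_2)\,\omega_i(x_1)$ into $q(x_1,x_2)\,\overline{\omega_i(x_1)}$; the result is exactly the second summand of $\mathcal{F}_sf$. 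Hence $\mathcal{F}_sf=\mathcal{F}_r(\mathcal{W}f)$. If $f$ is $\mathbb{C}_i$-valued then $q\equiv0$, and if $f$ is even in the first variable then $\check q=q$; in either case $\mathcal{W}f=f$, so $\mathcal{F}_sf=\mathcal{F}_rf$. This step is where the work lies, and the only delicate point — the main obstacle — is precisely this noncommutative bookkeeping: a misplaced factor of $j$ or a missing conjugate when combining (b) with the reflection is exactly what would break the identity.

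For (iii) I invoke Remark \ref{re2.1}(iv): $\langle f,g\rangle=\sum_{m=0}^{3}\int_{G^2}f_m g_m$, while $\mathcal{W}f$ and $\mathcal{W}g$ have real components $(f_0,f_1,\check f_2,\check f_3)$ and $(g_0,g_1,\check g_2,\check g_3)$. Applying the measure-preserving reflection (c) to the last two summands gives $\int_{G^2}\check f_2\check g_2=\int_{G^2}f_2 g_2$ and $\int_{G^2}\check f_3\check g_3=\int_{G^2}f_3 g_3$, hence $\langle\mathcal{W}f,\mathcal{W}g\rangle=\langle f,g\rangle$; taking $g=f$ and using that the norms induced by $\langle\cdot,\cdot\rangle$ and $(\cdot,\cdot)$ coincide yields $\left\|\mathcal{W}f\right\|_2=\left\|f\right\|_2$. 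Finally $Sc\left(i(f,g)\right)=Sc\left(\int_{G^2}i\,f\,\overline{g}\right)=\langle if,g\rangle$, and since $\mathcal{W}(if)=i\,\mathcal{W}f$ by (i), applying the identity just proved to $if$ and $g$ gives $\langle if,g\rangle=\langle\mathcal{W}(if),\mathcal{W}g\rangle=\langle i\,\mathcal{W}f,\mathcal{W}g\rangle=Sc\left(i(\mathcal{W}f,\mathcal{W}g)\right)$, as required.
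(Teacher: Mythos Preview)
Your proof is correct and follows essentially the same route as the paper's: both arguments hinge on the commutation rule $j\,\overline{\omega_i(x_1)}=\omega_i(x_1)\,j$ together with the Haar-measure-preserving reflection $x_1\mapsto -x_1$ for part (ii), and on the real-component expansion of Remark~\ref{re2.1}(iv) plus $\mathbb{C}_i$-linearity of $\mathcal{W}$ for part (iii). The only difference is cosmetic packaging --- you work with the $\mathbb{C}_i$-pair decomposition $f=p+qj$ (so that $\mathcal{W}f=p+\check q\,j$), while the paper writes out all four real components $f_0,f_1,f_2,f_3$ separately; your version is a bit more compact but the computations are the same.
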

\begin{proof}
(i)
From the definition of $\mathcal{W}$, we get
\begin{align*}
&\mathcal{W}\left(if\right)\left(x_1,x_2\right)=if_0\left(x_1,x_2\right)+iif_1{\left(x_1,x_2\right)+ijf}_2\left(-x_1,x_2\right)+ikf_3\left(-x_1,x_2\right)\\
&=i\left(f_0\left(x_1,x_2\right)+if_1{\left(x_1,x_2\right)+jf}_2\left(-x_1,x_2\right)+kf_3\left(-x_1,x_2\right)\right)\\
&=i\mathcal{W}\left(f\right)\left(x_1,x_2\right)
\end{align*}
Then $\mathcal{W}$ is a ${\mathbb{C}}_i$-linear transform.\\
To prove (ii), let $h:=\mathcal{W}f$. Then
\begin{align*}
{\mathcal{F}}_s\left(f\right)\left(\omega_i,\omega_j\right)&=\int_{G^2}{\overline{\omega_i(x_1)}\ f\left(x_1,x_2\right)\overline{\omega_j{(x}_2)}}\ {d^2}_{{\mu }_{G^2}}\left(x_1,x_2\right)\\
&=\int_{G^2}{\ f_0\left(x_1,x_2\right)\overline{\omega_i(x_1)}\ \overline{\omega_j{(x}_2)}}\ {d^2}_{{\mu }_{G^2}}\left(x_1,x_2\right)\\
&+\int_{G^2}{\ if_1\left(x_1,x_2\right)\overline{\omega_i(x_1)}\ \overline{\omega_j{(x}_2)}}\ {d^2}_{{\mu }_{G^2}}\left(x_1,x_2\right)\\
&+\int_{G^2}{j\ f_2\left(x_1,x_2\right)\omega_i(x_1)\overline{\omega_j{(x}_2)}}\ {d^2}_{{\mu }_{G^2}}\left(x_1,x_2\right)\\
&+\int_{G^2}{k\ f_3\left(x_1,x_2\right)\omega_i(x_1)\overline{\omega_j{(x}_2)}}\ {d^2}_{{\mu }_{G^2}}\left(x_1,x_2\right)\\
&=\int_{G^2}{\ f_0\left(x_1,x_2\right)\overline{\omega_i(x_1)}\ \overline{\omega_j{(x}_2)}}\ {d^2}_{{\mu }_{G^2}}\left(x_1,x_2\right)\\
&+\int_{G^2}{\ if_1\left(x_1,x_2\right)\overline{\omega_i(x_1)}\ \overline{\omega_j{(x}_2)}}\ {d^2}_{{\mu }_{G^2}}\left(x_1,x_2\right)\\
&+\int_{G^2}{j\ f_2\left(-x_1,x_2\right)\overline{\omega_i\left(x_1\right)}\ \overline{\omega_j{(x}_2)}}\ {d^2}_{{\mu }_{G^2}}\left(x_1,x_2\right)\\
&+\int_{G^2}{k\ f_3\left({-x}_1,x_2\right)\overline{\omega_i(x_1)}\ \overline{\omega_j{(x}_2)}}\ {d^2}_{{\mu }_{G^2}}\left(x_1,x_2\right)\\
&=\int_{G^2}{\ h\left(x_1,x_2\right)\overline{\omega_i(x_1)}\ \overline{\omega_j{(x}_2)}}\ {d^2}_{{\mu }_{G^2}}\left(x_1,x_2\right)={\mathcal{F}}_rh\left(\omega_i,\omega_j\right).
\end{align*}
If $f\ $ is ${\mathbb{C}}_i$-valued or $f$ is an even function with respect to first variable, then $\mathcal{W}f=f$. It follows that ${\mathcal{F}}_s f={\mathcal{F}}_r(\mathcal{W}f)$.\\
Now we prove (ii). If $f,g\in L^2\left(G^2, \mathbb{H}\right),\ $ then
\begin{align*}
\left\langle \mathcal{W}f,\mathcal{W}g\right\rangle &=Sc(\int_{G^2}{\ f_0\left(x_1,x_2\right)g_0\left(x_1,x_2\right)}\ {d^2}_{{\mu }_{G^2}}\left(x_1,x_2\right)\\
&+\int_{G^2}{f_1\left(x_1,x_2\right)g_1\left(x_1,x_2\right)}\ {d^2}_{{\mu }_{G^2}}\left(x_1,x_2\right)\\
&+\int_{G^2}{\ f_2\left(-x_1,x_2\right)g_2\left(-x_1,x_2\right)}\ {d^2}_{{\mu }_{G^2}}\left(x_1,x_2\right)\\
&+\int_{G^2}{\ f_3\left({-x}_1,x_2\right)\ g_3\left({-x}_1,x_2\right)}\ {d^2}_{{\mu }_{G^2}}\left(x_1,x_2\right))\\
&=Sc(\int_{G^2}{\ f_0\left(x_1,x_2\right)g_0\left(x_1,x_2\right)}\ {d^2}_{{\mu }_{G^2}}\left(x_1,x_2\right)\\
&+\int_{G^2}{f_1\left(x_1,x_2\right)g_1\left(x_1,x_2\right)}\ {d^2}_{{\mu }_{G^2}}\left(x_1,x_2\right)\\
&+\int_{G^2}{\ f_2\left(x_1,x_2\right)g_2\left(x_1,x_2\right)}\ {d^2}_{{\mu }_{G^2}}\left(x_1,x_2\right)\\
\end{align*}
\begin{align*}
&+\int_{G^2}{\ f_3\left(x_1,x_2\right)\ g_3\left(x_1,x_2\right)}\ {d^2}_{{\mu }_{G^2}}\left(x_1,x_2\right))\\
&=\left\langle f,g\right\rangle.
\end{align*}
Since $\mathcal{W}$ is left ${\mathbb{C}}_i$-linear, then
\begin{align*}
& Sc(i\int_{G^2}{f\left(x_1,x_2\right)\overline{g\left(x_1,x_2\right)}}\ {d^2}_{{\mu }_{G^2}}\left(x_1,x_2\right))\\
&=Sc(\int_{G^2}{if\left(x_1,x_2\right)\overline{g\left(x_1,x_2\right)}}\ {d^2}_{{\mu }_{G^2}}\left(x_1,x_2\right))\\
&=Sc\left(\int_{G^2}{\mathcal{W}(if)\left(x_1,x_2\right)\overline{\mathcal{W}(g)\left(x_1,x_2\right)}}\ {d^2}_{{\mu }_{G^2}}\left(x_1,x_2\right)\right)\\
&=Sc(i\left(\left(\mathcal{W}f,\mathcal{W}g\right)\right).
\end{align*}
Finally
\[{\left\|\mathcal{W}f\right\|}^2_2={\left\|f_0\right\|}^2_2{+\left\|f_1\right\|}^2_2{+\left\|f_2\right\|}^2_2{+\left\|f_3\right\|}^2_2={\left\|f\right\|}^2_2,\]
which completes the proof. \end{proof}
\begin{theorem}[Inversion of SQFT]\label{th4.4}
If $f\in L^1\left(G^2, \mathbb{H}\right),\ {\mathcal{F}}_sf\in L^1\left(\widehat{G^2}, \mathbb{H}\right)$ and
\begin{align}\label{eq4.2}
g\left(x_1,x_2\right)=\int_{\widehat{G^2}}{\omega_i(x_1){\mathcal{F}}_sf\left(\omega_i,\omega_j\right)\omega_j{(x}_2)}{d^2}_{{\mu }_{\widehat{G^2}}}\left(\omega_i,\omega_j\right),
\end{align}

then $f\left(x_1,x_2\right)=g(x_1,x_2)$ for almost every $(x_1,x_2)\in G^2$.
\end{theorem}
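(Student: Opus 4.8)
Put $h:=\mathcal{W}f$. Since $\mathcal{W}$ is a bijection of $L^1\left(G^2,\mathbb{H}\right)$ onto itself with $\mathcal{W}^{-1}=\mathcal{W}$ (Definition \ref{de4.2}), we have $h\in L^1\left(G^2,\mathbb{H}\right)$; and Proposition \ref{pr4.3}(ii) gives $\mathcal{F}_rh=\mathcal{F}_sf$, which by hypothesis belongs to $L^1\left(\widehat{G^2},\mathbb{H}\right)$. Thus $h$ satisfies exactly the hypotheses of the inversion theorem for the right-sided transform, Theorem \ref{th3.3}, which yields
\[
h\left(x_1,x_2\right)=\int_{\widehat{G^2}}\mathcal{F}_rh\left(\omega_i,\omega_j\right)\,\omega_j\left(x_2\right)\omega_i\left(x_1\right)\,{d^2}_{{\mu }_{\widehat{G^2}}}\left(\omega_i,\omega_j\right)=:\tilde g\left(x_1,x_2\right)
\]
for almost every $\left(x_1,x_2\right)\in G^2$. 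So the bulk of the analytic work (the approximate-identity argument) is imported wholesale through Theorem \ref{th3.3}, and what remains is purely algebraic.

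Next I would apply $\mathcal{W}$ to both sides. Since $\mathcal{W}$ is its own inverse and acts by permuting the four real components and reflecting the first variable $x_1\mapsto -x_1$ in two of them --- a measure-preserving operation on $G$ --- it sends almost-everywhere identities to almost-everywhere identities, whence $f=\mathcal{W}h=\mathcal{W}\tilde g$ almost everywhere. It therefore suffices to identify $\mathcal{W}\tilde g$ with the function $g$ of \eqref{eq4.2}. Writing $F:=\mathcal{F}_rh=\mathcal{F}_sf=F_0+iF_1+jF_2+kF_3$, one expands, for each fixed pair of characters $\left(\omega_i,\omega_j\right)$, the quaternionic triple products $F\left(\omega_i,\omega_j\right)\omega_j\left(x_2\right)\omega_i\left(x_1\right)$ and $\omega_i\left(x_1\right)F\left(\omega_i,\omega_j\right)\omega_j\left(x_2\right)$ into real components, using $ij=k$, $ji=-k$, $ik=-j$, $ki=j$ together with $\omega_i\left(-x_1\right)=\overline{\omega_i\left(x_1\right)}$; a short computation then shows that applying the component-plus-reflection operation defining $\mathcal{W}$ to the first integrand produces exactly the second. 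Integrating over $\widehat{G^2}$ gives $\mathcal{W}\tilde g=g$, hence $f=g$ almost everywhere. This last step is just the "inverse-transform version" of the intertwining relation $\mathcal{F}_sf=\mathcal{F}_r(\mathcal{W}f)$ and is proved in the same spirit as Proposition \ref{pr4.3}(ii).

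The only real obstacle is that last identification: one must keep careful track of the non-commutative products and of precisely which components pick up the sign change under $x_1\mapsto -x_1$. The cleanest way to organize it is to isolate the pointwise algebraic fact that, for all characters $\omega_i,\omega_j$ and every $q\in\mathbb{H}$, the $\mathbb{H}$-valued function $\left(x_1,x_2\right)\mapsto \omega_i\left(x_1\right)\,q\,\omega_j\left(x_2\right)$ equals $\mathcal{W}$ applied to $\left(x_1,x_2\right)\mapsto q\,\omega_j\left(x_2\right)\omega_i\left(x_1\right)$ (here $q$ is constant, so the only $x_1$-dependence on the right is carried by $\omega_i$). Both sides are $\mathbb{R}$-linear in $q$, so it is enough to verify this for $q\in\{1,i,j,k\}$, using $\overline{\omega_i}\,j=j\,\omega_i$, $\overline{\omega_i}\,k=k\,\omega_i$ and the substitution $x_1\mapsto -x_1$; one then integrates this identity against ${d^2}_{{\mu }_{\widehat{G^2}}}$ with $q=F\left(\omega_i,\omega_j\right)$. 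Everything else is a direct appeal to Definition \ref{de4.2}, Proposition \ref{pr4.3}(ii) and Theorem \ref{th3.3}.
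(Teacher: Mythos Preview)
Your proposal is correct and follows essentially the same route as the paper: reduce to the right-sided inversion via $\mathcal{F}_sf=\mathcal{F}_r(\mathcal{W}f)$, invoke Theorem~\ref{th3.3} for $\mathcal{W}f$, and then verify the purely algebraic intertwining identity $\mathcal{W}\tilde g=g$ between the right-sided and two-sided inverse integrals. The only cosmetic difference is in how that last algebraic step is organized: the paper extracts the four real components by computing $Sc(h)$, $Sc(hi)$, $Sc(hj)$, $Sc(hk)$ and using the cyclic property $Sc(pq)=Sc(qp)$ to move $\overline{\omega_i(x_1)}$ across, whereas you check the pointwise identity $\mathcal{W}\bigl[(x_1,x_2)\mapsto q\,\omega_j(x_2)\omega_i(x_1)\bigr]=\omega_i(x_1)\,q\,\omega_j(x_2)$ for $q\in\{1,i,j,k\}$ directly --- the two computations are equivalent.
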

\begin{proof}
By Proposition \ref{pr4.3} (ii), we have ${\mathcal{F}}_s f={\mathcal{F}}_r(\mathcal{W}f)$. Let $h:={{\mathcal{F}}_r}^{-1}\left({\mathcal{F}}_sf\right)$, then by Theorem \ref{th3.3}, \[h\left(x_1,x_2\right)={{\mathcal{F}}_r}^{-1}({\mathcal{F}}_r\mathcal{W}f)\left(x_1,x_2\right)
=(\mathcal{W}f)\left(x_1,x_2\right)\] for almost every $\left(x_1,x_2\right)\in G^2$.
To prove $f\left(x_1,x_2\right)=g(x_1,x_2)$, for almost every $(x_1,x_2)\in G^2$, it is enough to verify $\mathcal{W}g=h$. Note that
\[h\left(x_1,x_2\right)=\int_{\widehat{G^2}}{{\mathcal{F}}_sf\left(\omega_i,\omega_j\right)\omega_j{(x}_2)\omega_i(x_1)}{d^2}_{{\mu }_{\widehat{G^2}}}\left(\omega_i,\omega_j\right).\]
From \eqref{eq4.2}, we can see that $Sc\left(h\left(x_1,x_2\right)\right)=Sc(g\left(x_1,x_2\right))$.

 Since
\begin{align*}
h\left(x_1,x_2\right)j&=\int_{\widehat{G^2}}
{{\mathcal{F}}_sf\left(\omega_i,\omega_j\right)\omega_j{(x}_2)\omega_i(x_1)}{d^2}_{{\mu }_{\widehat{G^2}}}\left(\omega_i,\omega_j\right)j\\
&=\int_{\widehat{G^2}}{{\mathcal{F}}_sf\left(\omega_i,\omega_j\right)\omega_j{(x}_2)j\overline{\omega_i(x_1)}}{d^2}_{{\mu }_{\widehat{G^2}}}\left(\omega_i,\omega_j\right).
\end{align*}
Then
\begin{align*}
Sc\left(h\left(x_1,x_2\right)j\right)&=Sc(\int_{\widehat{G^2}}{{\mathcal{F}}_sf\left(\omega_i,\omega_j\right)\omega_j{(x}_2)j\overline{\omega_i\left(x_1\right)}}{d^2}_{{\mu }_{\widehat{G^2}}}\left(\omega_i,\omega_j\right))\\
&=Sc(\int_{\widehat{G^2}}{\overline{\omega_i\left(x_1\right)}{\mathcal{F}}_sf\left(\omega_i,\omega_j\right)\omega_j{(x}_2)j}{d^2}_{{\mu }_{\widehat{G^2}}}\left(\omega_i,\omega_j\right))\\
&=Sc\left(g\left(-x_1,x_2\right)j\right).
\end{align*}
Similarly we have
 \[Sc\left(h\left(x_1,x_2\right)i\right)=Sc(g\left(x_1,x_2\right)i) \mbox{ and } Sc\left(h\left(x_1,x_2\right)k\right)=Sc\left(g\left({-x}_1,x_2\right)k\right).\]
 Hence we conclude that $\mathcal{W}g=h$.
\end{proof}
So we can define the inverse two-sided quaternion Fourier transform by \eqref{eq4.2} or equivalently by ${\mathcal{W}}^{-1}{{\mathcal{F}}_r}^{-1}{\mathcal{F}}_s$.

\begin{definition}[ISQFT]\label{de4.5}
For every $f\in L^1\left(G^2, \mathbb{H}\right)$, the inverse two-sided quaternion Fourier transform of ${\mathcal{F}}_s$ is defined by
\[\left({{\mathcal{F}}_s}^{-1}f\right)\left(x_1,x_2\right)=\int_{\widehat{G^2}}{\omega_i{(x}_1)f\left({\omega}_i,\omega_j\right)\omega_j{(x}_2)}{d^2}_{{\mu }_{\widehat{G^2}}}\left(\omega_i,\omega_j\right).\]
\end{definition}
\subsection{The Plancherel theorem of SQFT}\label{su4.2}

In Section \ref{su3.2}, we extended $\ {{\mathcal{F}}_r|}_{L^1\cap L^2}\ $to $L^2\left(G^2, \mathbb{H}\right)$. The RQFT on $L^2\left(G^2, \mathbb{H}\right)\ $has more symmetry than RQFT in $L^1\left(G^2, \mathbb{H}\right)$. The relation ${\mathcal{F}}_sf={\mathcal{F}}_r(\mathcal{W}f)$ drives us to extend ${{\mathcal{F}}_s|}_{L^1\cap L^2}$ to $L^2\left(G^2, \mathbb{H}\right)$.
\begin{definition}\label{de4.6}
For every $f\in L^2\left(G^2, \mathbb{H}\right)$, the SQFT of $\Omega_sf$ is defined by
\begin{equation} \label{eq4.3}
\Omega_sf\omega :=\Omega_r(\mathcal{W}f).
\end{equation}
In fact, we can define $\Omega_s$ starting from the original definition of ${\mathcal{F}}_s$ and then taking $L^2$- norm limit. Equation \eqref{eq4.3} gives us a different but actually equivalent form of $\Omega_s.$
\end{definition}
\begin{theorem}\label{th4.7}
Suppose that $f\in L^2\left(G^2, \mathbb{H}\right)$, and $g\in L^2\left(\widehat{G^2}, \mathbb{H}\right)$. Then the following assertions hold:
\begin{enumerate}
\item[(i)]
$ {\rm The\ SQFT}\ \Omega_sf$ defined by \eqref{eq4.3} is equal to the $L^2$-limit of the sequence ${\{{\mathcal{F}}_sf_l\}}_l$, where ${\{f_l\}}_l$ is any sequence in$\ L^1\left(G^2, \mathbb{H}\right)\cap L^2\left(G^2, \mathbb{H}\right)$ converging to $f$ in the $L^2$-norm. If $f\in L^1\left(G^2, \mathbb{H}\right)\cap L^2\left(G^2, \mathbb{H}\right)$, then $\Omega_sf={\mathcal{F}}_sf$.
\item[(ii)]
The transform $\Omega_s$is a bijection on $L^2\left(\widehat{G^2}, \mathbb{H}\right)$ and ${\Omega_s}^{-1}g={\mathcal{W}}^{-1}{\Omega_r}^{-1}g$. Furthermore,$\ {\Omega_s}^{-1}g$ is equal to the $L^2-$limit of the sequence ${\{{{\mathcal{F}}_q}^{-1}g_l\}}_l$, where ${\{g_l\}}_l$ is any sequence in$\ L^1\left(\widehat{G^2}, \mathbb{H}\right)\cap L^2\left(\widehat{G^2}, \mathbb{H}\right)$ converging to$\ g$ in the $L^2$-norm. If $g\in L^1\left(\widehat{G^2}, \mathbb{H}\right)\cap L^2\left(\widehat{G^2}, \mathbb{H}\right)$, then
\[{\Omega_s}^{-1}g={{\mathcal{F}}_s}^{-1}g.\]
\end{enumerate}
\end{theorem}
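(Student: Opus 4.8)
The plan is to obtain both parts by transporting the already-established facts about $\Omega_r$ through the auxiliary transform $\mathcal{W}$, in the spirit of the definition $\Omega_sf=\Omega_r(\mathcal{W}f)$ in \eqref{eq4.3}. The relevant properties of $\mathcal{W}$, all contained in Definition \ref{de4.2} and Proposition \ref{pr4.3}, are: $\mathcal{W}$ is additive (indeed left $\mathbb{C}_i$-linear), it maps $L^1\left(G^2,\mathbb{H}\right)\cap L^2\left(G^2,\mathbb{H}\right)$ bijectively onto itself, it preserves the $L^2$-norm and is therefore an isometry (hence continuous) on $L^2\left(G^2,\mathbb{H}\right)$, it satisfies ${\mathcal{F}}_sf={\mathcal{F}}_r(\mathcal{W}f)$ for $f\in L^1\left(G^2,\mathbb{H}\right)$, and ${\mathcal{W}}^{-1}=\mathcal{W}$. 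From Section \ref{su3.2} and Theorems \ref{th3.10} and \ref{th3.12} I will use that $\Omega_r$ is the unique continuous extension of ${{\mathcal{F}}_r|}_{L^1\cap L^2}$ to $L^2\left(G^2,\mathbb{H}\right)$, that it is a unitary bijection onto $L^2\left(\widehat{G^2},\mathbb{H}\right)$, and that ${\Omega_r}^{-1}g$ is the $L^2$-limit of ${{\mathcal{F}}_r}^{-1}g_l$ along any approximating sequence ${\{g_l\}}_{l}\subset L^1\cap L^2$; I will also use the inverse analogue of Proposition \ref{pr4.3}(ii), namely ${{\mathcal{F}}_s}^{-1}={\mathcal{W}}^{-1}{{\mathcal{F}}_r}^{-1}$ on $L^1\left(\widehat{G^2},\mathbb{H}\right)$, which is proved by the same component-wise computation and is already recorded in the discussion following Theorem \ref{th4.4}.

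For part (i), fix $f\in L^2\left(G^2,\mathbb{H}\right)$ and a sequence ${\{f_l\}}_{l}$ in $L^1\left(G^2,\mathbb{H}\right)\cap L^2\left(G^2,\mathbb{H}\right)$ with ${\left\|f_l-f\right\|}_2\to 0$. Since $\mathcal{W}$ is additive and $L^2$-isometric, ${\left\|\mathcal{W}f_l-\mathcal{W}f\right\|}_2={\left\|\mathcal{W}(f_l-f)\right\|}_2={\left\|f_l-f\right\|}_2\to 0$, and each $\mathcal{W}f_l$ again belongs to $L^1\cap L^2$. Hence $\Omega_r(\mathcal{W}f_l)={\mathcal{F}}_r(\mathcal{W}f_l)={\mathcal{F}}_sf_l$ by the definition of $\Omega_r$ on $L^1\cap L^2$ together with Proposition \ref{pr4.3}(ii), and the continuity of $\Omega_r$ yields ${\mathcal{F}}_sf_l=\Omega_r(\mathcal{W}f_l)\to\Omega_r(\mathcal{W}f)=\Omega_sf$ in $L^2\left(\widehat{G^2},\mathbb{H}\right)$, which is the first assertion. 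If moreover $f\in L^1\cap L^2$, then $\mathcal{W}f\in L^1\cap L^2$, so $\Omega_sf=\Omega_r(\mathcal{W}f)={\mathcal{F}}_r(\mathcal{W}f)={\mathcal{F}}_sf$ (equivalently, apply the first assertion to the constant sequence $f_l\equiv f$).

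For part (ii), write $\Omega_s=\Omega_r\circ\mathcal{W}$. Since $\mathcal{W}$ is a bijection of $L^2\left(G^2,\mathbb{H}\right)$ and $\Omega_r$ is a bijection of $L^2\left(G^2,\mathbb{H}\right)$ onto $L^2\left(\widehat{G^2},\mathbb{H}\right)$, the composition $\Omega_s$ is a bijection of $L^2\left(G^2,\mathbb{H}\right)$ onto $L^2\left(\widehat{G^2},\mathbb{H}\right)$ with ${\Omega_s}^{-1}={\mathcal{W}}^{-1}\circ{\Omega_r}^{-1}$, which is the asserted identity ${\Omega_s}^{-1}g={\mathcal{W}}^{-1}{\Omega_r}^{-1}g$. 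For the limit description, fix $g\in L^2\left(\widehat{G^2},\mathbb{H}\right)$ and a sequence ${\{g_l\}}_{l}$ in $L^1\left(\widehat{G^2},\mathbb{H}\right)\cap L^2\left(\widehat{G^2},\mathbb{H}\right)$ with ${\left\|g_l-g\right\|}_2\to 0$. By Theorem \ref{th3.12}, ${\Omega_r}^{-1}g=\lim_{l\to\infty}{{\mathcal{F}}_r}^{-1}g_l$ in $L^2\left(G^2,\mathbb{H}\right)$; applying the continuous map ${\mathcal{W}}^{-1}$ and then the inverse intertwining relation gives
\[
{\Omega_s}^{-1}g={\mathcal{W}}^{-1}\bigl({\Omega_r}^{-1}g\bigr)=\lim_{l\to\infty}{\mathcal{W}}^{-1}\bigl({{\mathcal{F}}_r}^{-1}g_l\bigr)=\lim_{l\to\infty}{{\mathcal{F}}_s}^{-1}g_l
\]
in $L^2\left(G^2,\mathbb{H}\right)$. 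If in addition $g\in L^1\cap L^2$, then the constant sequence $g_l\equiv g$ yields ${\Omega_s}^{-1}g={{\mathcal{F}}_s}^{-1}g$.

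The whole argument is formal, essentially a composition of the bijectivity and isometry properties of $\Omega_r$ and $\mathcal{W}$; the only genuine (but routine) verifications needed are that $\mathcal{W}$ preserves $L^1\cap L^2$ and commutes with $L^2$-limits -- both clear since $\mathcal{W}$ merely reflects the first variable in the $j$- and $k$-components and is an $L^2$-isometry -- and the inverse intertwining identity ${{\mathcal{F}}_s}^{-1}={\mathcal{W}}^{-1}{{\mathcal{F}}_r}^{-1}$. The latter is where one must keep careful track of quaternionic noncommutativity: the $\mathbb{C}_i$-part of the data commutes past the character $\omega_i(x_1)$ and is untouched, while the $j$- and $k$-parts anticommute with $i$ and so must be compensated by exactly the reflection $x_1\mapsto-x_1$ built into $\mathcal{W}$.
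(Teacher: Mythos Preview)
Your proposal is correct and follows essentially the same route as the paper's own proof, which simply says that part (i) is a consequence of \eqref{eq4.3} and the definition of $\Omega_r$, and part (ii) is a consequence of \eqref{eq4.3} and Theorem \ref{th3.12}. You have faithfully unpacked those two sentences: the key ingredients are precisely the factorization $\Omega_s=\Omega_r\circ\mathcal{W}$, the $L^2$-isometry and $L^1\cap L^2$-preservation of $\mathcal{W}$, Proposition \ref{pr4.3}(ii), and the inverse intertwining $\mathcal{F}_s^{-1}=\mathcal{W}^{-1}\mathcal{F}_r^{-1}$ extracted from the proof of Theorem \ref{th4.4}.
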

\begin{proof}
The part (i) is a consequence of \eqref{eq4.3} and the definition of $\Omega_r$. The part (ii) is a consequence of \eqref{eq4.3} and Theorem \ref{th3.12}.
\end{proof}
As an immediate consequence of Theorems \ref{thkh3.10} and \ref{th3.12}, we present the following result.
\begin{theorem}\label{th4.8}
If $f,g\in L^2\left(G^2, \mathbb{H}\right)$, then $\left\langle \Omega_sf,g\right\rangle =\left\langle \mathcal{W}f,{\Omega_r}^{-1}g\right\rangle $.
\end{theorem}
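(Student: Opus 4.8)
The plan is to derive the identity directly from the defining relation $\Omega_s f=\Omega_r(\mathcal{W}f)$ in \eqref{eq4.3} together with the Parseval theorem for the RQFT. First I would note that, by Proposition \ref{pr4.3}, $\mathcal{W}f\in L^2(G^2,\mathbb{H})$ whenever $f\in L^2(G^2,\mathbb{H})$, so $\Omega_s f=\Omega_r(\mathcal{W}f)$ is a legitimate element of $L^2(\widehat{G^2},\mathbb{H})$; and by Theorem \ref{th3.10} the operator $\Omega_r$ maps $L^2(G^2,\mathbb{H})$ unitarily \emph{onto} $L^2(\widehat{G^2},\mathbb{H})$, so for the given $g\in L^2(\widehat{G^2},\mathbb{H})$ the element $u:=\Omega_r^{-1}g\in L^2(G^2,\mathbb{H})$ is well defined and $\Omega_r u=g$.

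Next I would apply the Parseval identity of Theorem \ref{thkh3.10} to the pair $(\mathcal{W}f,u)$. With $F_r=\Omega_r(\mathcal{W}f)=\Omega_s f$ and $\Gamma_r=\Omega_r u=g$, that theorem gives
\[
\int_{G^2}{\mathcal{W}f\left(x_1,x_2\right)\overline{u\left(x_1,x_2\right)}{d^2}_{{\mu }_{G^2}}\left(x_1,x_2\right)}
=\int_{\widehat{G^2}}{\Omega_s f\left(\omega_i,\omega_j\right)\overline{g\left(\omega_i,\omega_j\right)}{d^2}_{{\mu }_{\widehat{G^2}}}\left(\omega_i,\omega_j\right)},
\]
i.e. $(\mathcal{W}f,u)=(\Omega_s f,g)$ for the $\mathbb{H}$-valued inner product \eqref{RRR}. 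Taking scalar parts of both sides and recalling $\langle\cdot,\cdot\rangle=Sc(\cdot,\cdot)$ yields $\langle\Omega_s f,g\rangle=\langle\mathcal{W}f,\Omega_r^{-1}g\rangle$, as claimed. Alternatively, one may phrase this through Theorem \ref{th3.12}: since $\Omega_r$ is unitary, $\Omega_r^{-1}=\Omega_r^{*}$ satisfies $(\Omega_r v,w)=(v,\Omega_r^{-1}w)$ for all $v\in L^2(G^2,\mathbb{H})$, $w\in L^2(\widehat{G^2},\mathbb{H})$; applying this with $v=\mathcal{W}f$ and $w=g$ gives $(\Omega_s f,g)=(\mathcal{W}f,\Omega_r^{-1}g)$ immediately, and again one takes $Sc$.

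There is no genuine obstacle here; the statement is truly a corollary. The only points that deserve a word of care are that $\Omega_r^{-1}g$ is meaningful, which is precisely the surjectivity half of Theorem \ref{th3.10}, and that the $\mathbb{H}$-valued Parseval identity of Theorem \ref{thkh3.10} descends to the real inner product $\langle\cdot,\cdot\rangle$ by simply applying $Sc$, since $\langle p,q\rangle=Sc(p,q)$ for all $p,q\in L^2(G^2,\mathbb{H})$.
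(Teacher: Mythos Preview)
Your proposal is correct and matches exactly what the paper has in mind: the result is stated there without proof, merely as ``an immediate consequence of Theorems~\ref{thkh3.10} and~\ref{th3.12}'', which is precisely the route you take (Parseval for $\Omega_r$ applied to the pair $(\mathcal{W}f,\Omega_r^{-1}g)$, or equivalently the adjoint relation $\Omega_r^{-1}=\Omega_r^{*}$). Your observation that $g$ must actually lie in $L^2(\widehat{G^2},\mathbb{H})$ for $\Omega_r^{-1}g$ and $\langle\Omega_s f,g\rangle$ to make sense is well taken; the hypothesis $g\in L^2(G^2,\mathbb{H})$ in the printed statement is a typographical slip.
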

Having defined the SQFT for functions in $L^2\left(\widehat{G^2}, \mathbb{H}\right)$, we obtain the following Parseval's identity.

\begin{theorem}\label{th4.9}
Suppose that $f,g\in L^2\left(G^2, \mathbb{H}\right)$, $F_s=\Omega_sf,$ and ${{\rm \ }\Gamma }_s=\Omega_sg$; then
\[{\left\|F_s\right\|}_2={\left\|f\right\|}_2.\]
Furthermore if
\[{{\rm p}}_0+i{{\rm p}}_1+j{{\rm p}}_2+k{{\rm p}}_3=\int_{G^2}{f\left(x_1,x_2\right)}\overline{g\left(x_1,x_2\right)}{d^2}_{{\mu }_{G^2}}\left(x_1,x_2\right)\]
and
\[{{\rm q}}_0+i{{\rm q}}_1+j{{\rm q}}_2+k{{\rm q}}_3=\int_{\widehat{G^2}}{{\mathcal{F}}_sf\left(\omega_i,\omega_j\right)}\overline{{{\rm \ }\Gamma }_s\left(\omega_i,\omega_j\right)}{d^2}_{{\mu }_{\widehat{G^2}}}\left(\omega_i,\omega_j\right), \]
then ${{\rm p}}_m={{\rm q}}_m$, (\textit{m} = 0,1). Moreover, if both $f$ and g are ${\mathbb{C}}_i$-valued or even with respect to first variable, then ${{\rm p}}_m={{\rm q}}_m (\textit{m} = 0,1,2,3)$.
\end{theorem}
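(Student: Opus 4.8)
The plan is to transport the whole statement to the right-sided transform through the factorization $\Omega_s=\Omega_r\circ\mathcal{W}$ of Definition~\ref{de4.6}, and then to feed in the unitarity of $\Omega_r$ (Theorem~\ref{th3.10}), the quaternion-valued Parseval identity for $\Omega_r$ (Theorem~\ref{thkh3.10}), and the three identities collected in Proposition~\ref{pr4.3}(iii). First I would settle the norm equality: writing $F_s=\Omega_sf=\Omega_r(\mathcal{W}f)$, the unitarity of $\Omega_r$ gives $\|F_s\|_2=\|\mathcal{W}f\|_2$, and Proposition~\ref{pr4.3}(iii) gives $\|\mathcal{W}f\|_2=\|f\|_2$; hence $\|F_s\|_2=\|f\|_2$.

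The key point for the component identities is that $\Omega_r$ preserves the \emph{full} $\mathbb{H}$-valued inner product $(\cdot,\cdot)$, by Theorem~\ref{thkh3.10}. Applying this with $\mathcal{W}f$ and $\mathcal{W}g$ in place of $f$ and $g$, and using $F_s=\Omega_r\mathcal{W}f$, $\Gamma_s=\Omega_r\mathcal{W}g$, one gets
\[
q_0+iq_1+jq_2+kq_3=(F_s,\Gamma_s)=(\mathcal{W}f,\mathcal{W}g).
\]
Now I would read off $q_0$ and $q_1$ by taking the scalar part and the scalar part of the $i$-multiple: for $q=q_0+iq_1+jq_2+kq_3$ one has $Sc(q)=q_0$ and $Sc(iq)=-q_1$, so $q_0=\langle\mathcal{W}f,\mathcal{W}g\rangle$ and $-q_1=Sc\bigl(i(\mathcal{W}f,\mathcal{W}g)\bigr)$. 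By the first identity of Proposition~\ref{pr4.3}(iii), $\langle\mathcal{W}f,\mathcal{W}g\rangle=\langle f,g\rangle=p_0$; by the second identity there, $Sc\bigl(i(\mathcal{W}f,\mathcal{W}g)\bigr)=Sc\bigl(i(f,g)\bigr)=-p_1$. This yields $q_0=p_0$ and $q_1=p_1$. For the last clause, if each of $f,g$ is $\mathbb{C}_i$-valued or even in the first variable then $\mathcal{W}f=f$ and $\mathcal{W}g=g$ by Proposition~\ref{pr4.3}(ii), so the displayed chain collapses to $(F_s,\Gamma_s)=(f,g)$ and all four components coincide.

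The part I expect to require the most care is not a computation but the explanation of why the identity stops at $m=1$ in general: $\mathcal{W}$ is only left $\mathbb{C}_i$-linear, not $\mathbb{H}$-linear, and it flips the sign of the first variable in the $j$- and $k$-components, so $Sc\bigl(j(\mathcal{W}f,\mathcal{W}g)\bigr)$ and $Sc\bigl(k(\mathcal{W}f,\mathcal{W}g)\bigr)$ do not reduce to $Sc\bigl(j(f,g)\bigr)$ and $Sc\bigl(k(f,g)\bigr)$. Hence the $j$- and $k$-components of $(F_s,\Gamma_s)$ and $(f,g)$ genuinely differ unless $\mathcal{W}$ already fixes both functions, which is exactly the hypothesis of the ``moreover'' part. (In the displayed formula for the $q_m$ the symbol $\mathcal{F}_sf$ should be understood as its $L^2$-extension $\Omega_sf=F_s$.)
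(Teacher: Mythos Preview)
Your proposal is correct and rests on the same factorization $\Omega_s=\Omega_r\circ\mathcal{W}$ and the same ingredients (unitarity of $\Omega_r$, Proposition~\ref{pr4.3}) as the paper. The only difference is in how $p_0=q_0$ and $p_1=q_1$ are extracted: the paper first establishes the SQFT norm identity and then runs polarization with $\|f+g\|_2^2=\|F_s+\Gamma_s\|_2^2$ and $\|f+ig\|_2^2=\|F_s+i\Gamma_s\|_2^2$ (using the left $\mathbb{C}_i$-linearity of $\Omega_s$), whereas you invoke the full quaternion-valued Parseval identity for $\Omega_r$ (Theorem~\ref{thkh3.10}) once to obtain $(F_s,\Gamma_s)=(\mathcal{W}f,\mathcal{W}g)$ and then read off the $0$- and $1$-components directly via the two identities in Proposition~\ref{pr4.3}(iii). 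Your route is slightly more economical and has the advantage of making transparent \emph{why} the argument stops at $m=1$: Proposition~\ref{pr4.3}(iii) supplies exactly $Sc$ and $Sc(i\,\cdot)$ invariance of $(\cdot,\cdot)$ under $\mathcal{W}$, and nothing for the $j$- and $k$-parts. The ``moreover'' clause is handled identically in both arguments, via $\mathcal{W}f=f$, $\mathcal{W}g=g$.
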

\begin{proof}
Firstly, we show that Parseval's identity of SQFT holds. Applying Parseval's identity of RQFT and Proposition \ref{pr4.3} (iii), we have
\[{\left\|F_s\right\|}^2_2=\left\langle \Omega_sf,\Omega_sf\right\rangle =\left\langle \Omega_s(\mathcal{W}f),\Omega_s(\mathcal{W}f)\right\rangle =\left\langle \mathcal{W}f,\mathcal{W}f\right\rangle ={\left\|\mathcal{W}f\right\|}^2_2={\left\|f\right\|}^2_2.\]
By using Parseval's identity of SQFT to ${\left\|f+g\right\|}^2_2={\left\|F_s+{{\rm \ }\Gamma }_s\right\|}^2_2,\ {\left\|f+ig\right\|}^2_2={\left\|F_s+{{\rm i\ }\Gamma }_s\right\|}^2_2\ $respectively, we get ${{\rm p}}_m={{\rm q}}_m (m = 0,1)$. If both $f$ and g are ${\mathbb{C}}_i-$valued or even with respect to first variable, then $\Omega_sf=\Omega_rf$ and $\Omega_sg=\Omega_rg$; therefore

$\left\langle \Omega_sf,\Omega_sg\right\rangle =\left\langle \Omega_rf,\Omega_rg\right\rangle =\left\langle f,g\right\rangle $, that is, ${{\rm p}}_m={{\rm q}}_m (m= 0,1,2,3)$.
\end{proof}
\section{ Discussions and Conclusions}\label{se5}

Due to the non-commutatively of multiplication of quaternions, there are at least eight types of QFTs and we only consider two typical types of them. How about the rest of QFTs?
\begin{enumerate}
\item[1.]
The left-sided QFT (LQFT) $\overline{\omega_i\left(x_1\right)}\overline{\omega_j\left(x_2\right)}f(.,.)$ follows a similar pattern to RQFT, with the kernel moves to the left side. As left-sided QFT is right H-linear, we only need to revise the definition of inner product in $L^2\left(G^2, \mathbb{H}\right)$ to be ${\left\langle f,g\right\rangle }_{L^2\left(G^2, \mathbb{H}\right)}=\ \int_{G^2}{\overline{f\left(x_1,x_2\right)}g\left(x_1,x_2\right){d^2}_{{\mu }_{G^2}}\left(x_1,x_2\right)}$.
Then the results of RQFT still hold for LQFT case.
\item[2.]
If $i\ {\rm and}\ j$ are substituted into ${\mu }_1{\rm and}\ {\mu }_2,\ \ $respectively, where ${\mu }_1\ {\rm and}\ {\mu }_2$ are any two perpendicular unit pure imaginary quaternions, all of above results still hold.
\item[3.]
For a locally compact abelian group $G$, we may consider $L^2\left(G, \mathbb{C}\right)$ as a $\mathbb{C}$-subspace of $L^2\left(G^2, \mathbb{H}\right)$, by the mapping $f\longmapsto f_\mathbb{H}$, where $f_\mathbb{H}(x,y)=f(x),\  f\in L^2\left(G, \mathbb{C}\right)$. If $\hat{f}$  is the classical Fourier transform of $f$, then $\hat{f}(\omega )={\mathcal{F}}_r(f_\mathbb{H})(\omega,1)$. Therefore based on the proof of inverse right quaternionic Fourier transform, we may prove the classical inverse Fourier transform, Plancherel theorem and other properties. Our technique is different,  and more visible from the classical one.
\end{enumerate}

\end{document}